\documentclass[12pt]{article}

\usepackage{amsmath,amsfonts,amsthm,amssymb,graphicx,color}
\usepackage[mathscr]{euscript}
\usepackage{tikz}
\usepackage{authblk}

\renewcommand{\b}[1]{\boldsymbol{#1}}

\newcommand{\beps}{\b{\varepsilon}}
\newcommand{\bfx}{\b{x}}
\newcommand{\bn}{{\b{n}}}
\newcommand{\bnu}{{\b{\nu}}}
\newcommand{\bnabla}{\b{\nabla}}

\newcommand{\bsigma}{\b{\sigma}}

\newcommand{\btau}{\b{\tau}}

\newcommand{\bW}{\b{W}}
\newcommand{\bw}{\b{w}}

\newcommand{\cB}{\mathcal{B}}
\newcommand{\cE}{\mathcal{E}}

\newcommand{\cEnBN}{\mathcal{E}^{\mathrm{N}}_{\boldsymbol{n}}}

\newcommand{\cEnBE}{\mathcal{E}^{\mathrm{E}}_{\boldsymbol{n}}}
\newcommand{\cF}{\mathcal{F}}
\newcommand{\CF}{C_\mathrm{FP}}
\newcommand{\cG}{\mathcal{G}}
\newcommand{\cN}{\mathcal{N}}
\newcommand{\cT}{\mathcal{T}}
\newcommand{\CT}{C_\mathrm{T}}

\newcommand{\CTN}{C_\mathrm{T}^\mathrm{N}}
\newcommand{\ddiv}{\operatorname{div}}
\newcommand{\ds}{\dx[\b{s}]}
\newcommand{\dx}[1][\bfx]{\,\mathrm{d}#1}
\newcommand{\GammaD}{{\Gamma_{\mathrm{D}}}}
\newcommand{\GammaN}{{\Gamma_{\mathrm{N}}}}
\newcommand{\GammaNn}{{\Gamma_\bn^{\mathrm{N}}}}
\newcommand{\GammaNnp}{{\Gamma_\bn^{\mathrm{N}+}}}
\newcommand{\GammaNnz}{{\Gamma_\bn^{\mathrm{N}0}}}

\newcommand{\gN}{g_{\mathrm{N}}}
\newcommand{\Hdiv}[1][\Omega]{\b{H}(\ddiv,#1)}
\newcommand{\Ieff}{I_{\text{eff}}}

\newcommand{\norm}[1]{\left\|#1\right\|}
\newcommand{\oCT}{\overline{C}_\mathrm{T}}
\newcommand{\oGammaD}{\overline\Gamma_{\mathrm{D}}}
\newcommand{\oGammaN}{\overline\Gamma_{\mathrm{N}}}
\newcommand{\osc}{\operatorname{osc}}
\newcommand{\PiN}{\Pi_\mathrm{N}}
\newcommand{\R}{\mathbb{R}}
\newcommand{\RN}{R_\mathrm{N}}

\newcommand{\teta}{\tilde\eta}
\newcommand{\tbtau}{\tilde\btau}
\newcommand{\tGammaNK}{\widetilde\Gamma_{\mathrm{N}}^K}

\newcommand{\trinorm}[1]{|\!|\!|#1|\!|\!|}

\newtheorem{theorem}{Theorem}
\newtheorem{corollary}[theorem]{Corollary}
\newtheorem{lemma}[theorem]{Lemma}

\begin{document}

\title{A Simple Approach to Reliable and Robust A Posteriori Error Estimation
for Singularly Perturbed Problems} 

\author[a]{Mark Ainsworth}
\author[b]{Tom\'a\v s Vejchodsk\'y\thanks{T. Vejchodsk\'y acknowledges the support of the Czech Science Foundation, project no. 18-09628S, and the institutional support RVO 67985840.}}
\affil[a]{
\small
Division of Applied Mathematics, Brown University, 182 George St, Providence, RI~02912, USA and
Computer  Science  and  Mathematics  Division, Oak Ridge National Laboratory, Oak Ridge, TN~37831, USA,
\texttt{mark\_ainsworth@brown.edu}
}
\affil[b]{
  \small
  Institute of Mathematics,
  Czech Academy of Sciences,
  {\v Z}itn{\'a} 25, CZ-115 67 Prague 1,
  Czech Republic,
  \texttt{vejchod@math.cas.cz}
}

\maketitle

\begin{abstract} A simple flux reconstruction for finite element
solutions of reaction-diffusion problems is shown to yield fully computable
upper bounds on the energy norm of error in an approximation of singularly
perturbed reaction-diffusion problem. The flux reconstruction is based on
simple, independent post-processing operations over patches of elements in
conjunction with standard Raviart--Thomas vector fields and gives upper bounds
even in cases where Galerkin orthogonality might be violated. If Galerkin
orthogonality holds, we prove that the corresponding local error indicators are
locally efficient and robust with respect to any mesh size and any size of the
reaction coefficient, including the singularly perturbed limit. 
\end{abstract}

\noindent\textbf{Keywords:}
finite element analysis, robust a posteriori error estimate,
singularly perturbed problems, flux reconstruction


\noindent\textbf{MSC:} 65N15, 65N30, 65J15

\section{Introduction}
This paper is concerned with developing fully computable bounds for the error
in the finite element approximation of the following linear reaction-diffusion
problem
\begin{equation}
\label{eq:modpro}
  -\Delta u + \kappa^2 u = f \quad\text{in }\Omega; \qquad
  u = 0 \quad\text{on }\GammaD; \qquad
  \partial u/\partial \bnu = \gN \quad\text{on }\GammaN,
\end{equation}
where the domain $\Omega\subset\R^d$, $d\geq 2$, is a polytope and $\bnu$
denotes the unit outward normal vector on the boundary $\partial\Omega$. Here,
the portions $\GammaD$ and $\GammaN$ of the boundary $\partial\Omega$ are open,
disjoint and satisfy $\oGammaD\cup\oGammaN=\partial\Omega$. For simplicity, we
assume that the data $f\in L^2(\Omega)$ and $\gN\in L^2(\GammaN)$ and that the
reaction coefficient $\kappa\geq 0$ is piecewise constant.  The problem has a
unique solution provided that either $\GammaD$ has a positive measure or
$\kappa$ is not identically zero. 

Given a conforming approximation $u_h$ of the true solution $u$ of problem
\eqref{eq:modpro}, we present a novel a posteriori error estimator for the
energy norm of the error $\trinorm{u-u_h}$. The estimator is rather easy to
evaluate (via a fast element by element algorithm) and provides a guaranteed
upper bound on the true error measured in the energy norm. In the case where
$u_h$ is the Galerkin finite element approximation of $u$, we prove that the
estimator is locally efficient and provides an upper bound which does not
degenerate in the singularly perturbed limit (i.e. when $\kappa\to\infty$).



This error estimator is evaluated using a reconstructed flux.
In contrast to our previous work \cite{AinBab:1999,robustaee:2010,AinVej:2014},
the reconstructed flux is obtained by solving small local problems on patches of 
elements by Raviart--Thomas finite elements. This approach is technically simpler
and yields a more accurate error estimator.

The idea of the flux reconstruction by solving small problems on patches 
comes from \cite{BraSch:2008}. It can be seen as approximate minimization of 
the error bound using an overlapped domain decomposition method with subdomains
chosen as patches of elements. Local minimization problems on these patches 
have equilibration constraints and are solved by by mixed finite elements. Our 
result can be seen as a robust generalization of this idea to 
reaction-diffusion problems.

The general idea of flux reconstructions, however, dates back to the method of
hypercircle \cite{PraSyn:1947,Synge:1957} and later to
\cite{AubBur:1971,HasHla:1976,Kelly:1984,LadLeg:1983,Veubeke:1965}. In the last
two decades it was vastly developed, see
e.g. \cite{AinBab:1999,CaiZha:2010,HanSteVoh:2012,LucWoh:2004,ParSanDie2009,Rep:2008}
and references there in.  Interestingly, error estimates based on flux
reconstructions can be utilized to estimate various components of the error
such as the discretization, iteration, and algebraic errors
\cite{DolSebVoh:2015,ErnVoh:2010,JirStrVoh:2010,PapStrVoh:2018}. This enables
us to adaptively equilibrate all components of the error and develop algorithms
that do not perform excessive iterations of linear and nonlinear solvers in
cases when the iteration and algebraic errors are already on the level of the
discretization error.

The first robust, reliable, and locally efficient a posteriori error estimate
for problem \eqref{eq:modpro} was derived by Verf\"urth in
\cite{Verfurth:1998a,Verfurth:1998}.  
Locally efficient and robust guaranteed error bounds for the vertex-centred 
finite volume discretization of \eqref{eq:modpro} were proposed in 
\cite{CheFucPriVoh:2009}.  
Robust reliability estimate for singularly perturbed problem on anisotropic meshes is
proved in \cite{Kopteva:2017}. A similar result for a guaranteed and fully
computable error bound is provided in \cite{Kopteva:2018}. An interesting
alternative idea for guaranteed upper bounds for reaction-diffusion problems
was recently published in \cite{ParDie:2017}.
Preprint \cite{SmeVoh:2018} proofs robustness of a simple a posteriori error 
estimator, however 
their approach considerably differs from the one presented below due to 
equilibration of fluxes even if the reaction term dominates
and due to the presence of weights in the estimator.

The rest of the paper is organized as follows.  Section~\ref{se:modpro} briefly
introduces the finite element approximation of problem \eqref{eq:modpro} and
the corresponding notation.  Section~\ref{se:AEE} defines the a posteriori
error estimator and proves that it is the guaranteed upper bound on the error.
Section~\ref{se:locmin} introduces flux reconstruction based on local
minimization problems and Sections~\ref{se:sig1} and \ref{se:sig2} define two
auxiliary flux reconstructions that are used in Section~\ref{se:efficiency} to
prove the robust local efficiency of the proposed error indicators.
Section~\ref{se:noconstr} proposes alternative flux reconstruction that does
not require equilibration condition.  Section~\ref{se:numex} provides a couple
of numerical examples and Section~\ref{se:conclusions} draws the conclusions.

\section{Model Problem and Its Discretization} \label{se:modpro}
\subsection{Partitions}
Let $\cG = \{\cT_h\}$ be a family of partitionings of the domain $\Omega$ into
simplicial elements. The intersection of each distinct pair of elements in a
given partition $\cT_h \in \cG$ is assumed to consist of a single common vertex
or a single common facet of both elements.  The diameter and inradius of an
element $K$ are denoted by $h_K$ and $\rho_K$, respectively. The family $\cG$
is assumed to be regular in the sense that there exists a constant $C>0$ such
that
\begin{equation}
\label{eq:shapereg}
  \sup\limits_{\cT_h \in \cG} \max\limits_{K \in \cT_h} \frac{h_K}{\rho_K} \leq C.
\end{equation}
This assumption permits meshes in which the elements are locally refined such
as might arise from an adaptive refinement algorithm.  The patch consisting of
an element $K\in\cT_h$ and those elements in $\cT_h$ sharing at least one
common point with $K$ is defined by 
\begin{equation}
\label{eq:patchK}
  \widetilde K = \operatorname{int} \bigcup \left\{ 
  K'\in\cT_h : K' \cap K \neq \emptyset \right\}.
\end{equation}

The regularity condition \eqref{eq:shapereg} means that the number of elements
in any patch is uniformly bounded over the family $\cG$, as is the number of
patches containing a particular element.  Further, condition
\eqref{eq:shapereg} implies the following local quasi-uniformity and shape
regularity properties: there exist constants $c>0$ and $C>0$ such that for all
elements $K' \subset \widetilde K$, all $K\in\cT_h$, and all $\cT_h \in \cG$
estimates $ch_K \leq h_{K'} \leq C h_K$ and $c \rho_K \leq \rho_{K'} \leq C
\rho_K$ hold.

Here, and throughout, we adopt the convention whereby the symbol $C$ is used to
denote a generic constant throughout the paper, whose actual numerical value
can differ in different occurrences, but it is always independent of $\kappa$
and any mesh-size.

The notation $(\cdot,\cdot)_\omega$ and $\norm{\cdot}_\omega$ is used to denote
the $L^2(\omega)$ scalar product and norm over a subset $\omega\subset\Omega$,
and we omit the subscript in the case when $\omega=\Omega$.  The
$L^2(K)$-orthogonal projector onto the space of affine functions
$\mathbb{P}_1(K)$ over element $K\in\cT_h$ is denoted by
$\Pi_K:L^2(K)\rightarrow\mathbb{P}_1(K)$, whist $\Pi$ is used to denote the
concatenation of the  elementwise projections $\Pi_K$, i.e. $(\Pi f)|_K = \Pi_K
f$ for all $K\in\cT_h$.  Similarly, for a facet
$\gamma\subset\GammaN\cap\partial K$, $\Pi_\gamma : L^2(\gamma) \rightarrow
\mathbb{P}_1(\gamma)$ denotes the $L^2(\gamma)$-orthogonal projector, and
$\PiN$ denotes the concatenation of the facetwise projections
$\Pi_\gamma$: $(\PiN \gN)|_\gamma = \Pi_\gamma \gN$ for all facets
$\gamma \subset \GammaN$.

\subsection{Assumptions on the Reaction Coefficient $\kappa$}
\label{se:kappa} For simplicity, we shall assume that the reaction coefficient
is constant on every element over the entire set of partitions in $\cG$ and we
denote by $\kappa_K = \kappa|_K$ its constant value in $K \in \cT_h$.
Moreover, we shall assume that the reaction coefficient $\kappa$ varies slowly
between neighbouring elements in the sense that that there exists a constant
$C>0$ such that the following condition holds for all triangulations $\cT_h\in
\cG$ and all elements $K\in\cT_h$:
\begin{align}
  \label{eq:kappacond1}
  &\text{if}\quad h_K \kappa_K > 1 \quad\text{then}\quad
  \kappa_K \leq C \kappa_{K'}
  \quad \text{for all }K'\subset\widetilde K.
\end{align}
We state, without proof, some elementary consequences of the above assumption:
\begin{lemma}\label{le:kappa}
Suppose that condition~\eqref{eq:kappacond1} holds. Then
\begin{enumerate}
\item if $\kappa_K=0$, then $\kappa_{K'}<1/h_{K'}$ on the patch
      $\widetilde{K}$;
\item if $h_K \kappa_K > 1$, then $\kappa_{K'}>0$ on the patch $\widetilde{K}$; 
\item there exists a constant $C>0$ such that for all $\cT_h\in\cG$, all $K\in
\cT_h$, if $h_K \kappa_K > 1$, then
\begin{equation*}
C^{-1}\kappa_{K'}\leq\kappa_K\leq C\kappa_{K'} 
\end{equation*}
for all $K'\in\widetilde{K}$; 
\item there exists a constant $C>0$ such that for all $\cT_h\in\cG$, all $K\in
\cT_h$, and all elements $K' \subset \widetilde K$, 
\begin{equation*}
 C^{-1} \min\{h_{K'},\kappa_{K'}^{-1} \} \leq
        \min\{h_{K},\kappa_{K}^{-1} \} \leq
      C \min\{h_{K'},\kappa_{K'}^{-1} \}.
\end{equation*}
\end{enumerate}
\end{lemma}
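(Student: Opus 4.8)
The plan is to dispose of the four statements in turn, leaning throughout on two structural facts: the local quasi-uniformity $c h_K \le h_{K'} \le C h_K$ (a consequence of the shape-regularity \eqref{eq:shapereg}), and the \emph{symmetry of the patch relation}, i.e.\ $K'\subset\widetilde K$ is equivalent to $K\subset\widetilde{K'}$, since both merely assert $K\cap K'\neq\emptyset$. This symmetry is what allows \eqref{eq:kappacond1} to be applied ``from the side of $K'$'' whenever $h_{K'}\kappa_{K'}>1$. Statement~2 is then immediate: if $h_K\kappa_K>1$ then $\kappa_K\ge 1/h_K>0$, and \eqref{eq:kappacond1} gives $\kappa_{K'}\ge\kappa_K/C>0$ for every $K'\subset\widetilde K$. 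For Statement~1 I would argue by contradiction: if $\kappa_K=0$ but $h_{K'}\kappa_{K'}>1$ for some $K'\subset\widetilde K$, then since $K\subset\widetilde{K'}$, applying \eqref{eq:kappacond1} with $K'$ in the role of $K$ forces $\kappa_{K'}\le C\kappa_K=0$, contradicting $\kappa_{K'}>0$.

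For Statement~3 the upper bound $\kappa_K\le C\kappa_{K'}$ is precisely \eqref{eq:kappacond1}, so the work is entirely in the lower bound $\kappa_{K'}\le C\kappa_K$. Here the one-directional nature of the hypothesis is the crux, and I would split into two cases. If $h_{K'}\kappa_{K'}>1$, the symmetry lets me apply \eqref{eq:kappacond1} from $K'$ to obtain $\kappa_{K'}\le C\kappa_K$ directly. If instead $h_{K'}\kappa_{K'}\le 1$, then $\kappa_{K'}\le 1/h_{K'}\le 1/(ch_K)<\kappa_K/c$, using quasi-uniformity together with $h_K\kappa_K>1$. Taking the larger of the two constants closes the lower bound.

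Statement~4 I would then derive by a case analysis resting on Statement~3, using the $K\leftrightarrow K'$ symmetry to reduce the two-sided estimate to the single inequality $m_K\le C m_{K'}$, where $m_K=\min\{h_K,\kappa_K^{-1}\}$. If $h_K\kappa_K>1$, then $m_K=\kappa_K^{-1}$, and I obtain $\kappa_K^{-1}<h_K\le h_{K'}/c$ together with $\kappa_K^{-1}\le C\kappa_{K'}^{-1}$ (the latter being exactly the lower bound of Statement~3), whence $m_K\le C\min\{h_{K'},\kappa_{K'}^{-1}\}$. If $h_K\kappa_K\le 1$, then $m_K=h_K$; when $h_{K'}\kappa_{K'}\le 1$ quasi-uniformity yields $h_K\le h_{K'}/c=m_{K'}/c$, and when $h_{K'}\kappa_{K'}>1$ I apply \eqref{eq:kappacond1} from $K'$ to get $\kappa_{K'}\le C\kappa_K$, so that $h_K\kappa_{K'}\le C h_K\kappa_K\le C$, i.e.\ $h_K\le C\kappa_{K'}^{-1}=Cm_{K'}$.

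The anticipated obstacle is not conceptual but organizational: because \eqref{eq:kappacond1} is informative only in the regime $h_K\kappa_K>1$, every estimate in the reaction-small regime must instead be recovered from the elementary bound $\kappa^{-1}\ge h$ combined with quasi-uniformity, and one must check that the generic constant absorbs the shape-regularity constants $c,C$ uniformly over the (boundedly many) elements of each patch. The one genuinely delicate point is the \emph{strict} inequality asserted in Statement~1: the contradiction argument above yields $h_{K'}\kappa_{K'}\le 1$, and the degenerate equality case $h_{K'}\kappa_{K'}=1$ is not excluded by \eqref{eq:kappacond1} alone, so strictness would have to be justified separately (or the statement read with a harmless non-strict inequality).
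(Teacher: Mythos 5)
Your proof is correct, and there is nothing in the paper to compare it against: the authors explicitly state this lemma \emph{without proof}, so your argument supplies exactly the elementary case analysis they omit. The two pillars you identify are the right ones — the symmetry $K'\subset\widetilde{K}\iff K\subset\widetilde{K'}$ (both say $K\cap K'\neq\emptyset$), which lets \eqref{eq:kappacond1} be invoked from whichever element satisfies $h\kappa>1$, and the local quasi-uniformity $ch_K\le h_{K'}\le Ch_K$ from \eqref{eq:shapereg}, which handles the regime where the hypothesis of \eqref{eq:kappacond1} is silent; all four case analyses (including the reduction of item~4 to the one-sided bound $m_K\le Cm_{K'}$ via symmetry, with the convention \eqref{eq:kappa0} covering $\kappa_K=0$) check out. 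Your caveat about item~1 is also accurate: the contradiction argument only rules out $h_{K'}\kappa_{K'}>1$, so the borderline case $h_{K'}\kappa_{K'}=1$ survives and \eqref{eq:kappacond1} alone yields only $\kappa_{K'}\le 1/h_{K'}$, not the strict inequality as printed. This is harmless — every subsequent use of the lemma in the paper (notably $\kappa_{K'}>0$, $\kappa_{K'}^{-1}h_{K'}^{-1}\le C$ in Sections~5--6, and item~4 itself) relies only on non-strict bounds — but you are right that strictness would need either a non-strict reading of the statement or an additional assumption.
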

The quantity $\min\{h_{K},\kappa_{K}^{-1} \}$ appears extensively throughout
the paper and we shall adopt the convention whereby 
\begin{equation}
\label{eq:kappa0}
   \min\{h_{K},\kappa_{K}^{-1} \} = h_K
   \quad\text{if }\kappa_K = 0.
\end{equation}

\subsection{Finite Element Discretization}
The weak formulation of problem \eqref{eq:modpro} reads:
find $u\in V = \{ v \in H^1(\Omega) : v = 0 \text{ on }
\GammaD \}$ such that
\begin{equation}
\label{eq:weakf}
  \cB(u, v) = \cF(v) \quad \forall v \in V,
\end{equation}
where $\cB:V\times V\to\R$ and $\cF:V\to\R$ are defined by 
\begin{equation*}
  \cB(u,v) = \int_\Omega (\bnabla u \cdot \bnabla v + \kappa^2 u v ) \dx; \quad
  \cF(v) = \int_\Omega f v \dx + \int_\GammaN \gN v \ds.
\end{equation*}
It will be useful to introduce local counterparts of these forms 
\begin{equation*}
  \cB_K(u,v) = \int_K (\bnabla u \cdot \bnabla v + \kappa_K^2 u v ) \dx; \quad
  \cF_K(v) = \int_K f v \dx + \int_{\GammaN\cap\partial K} \gN v \ds.
\end{equation*}
The associated global and local energy norms $\trinorm{\cdot}$ and
$\trinorm{\cdot}_K$ are defined by $\trinorm{v}^2 = \cB(v,v)$ and
$\trinorm{v}_K^2 = \cB_K(v,v)$, respectively. 

Let $V_h = \{ v_h \in V : v_h|_K \in \mathbb{P}_1(K) \ \forall K \in \cT_h \}$,
where $\mathbb{P}_1(K)$ is the space of affine functions on $K$, then the
finite element approximation $u_h \in V_h$ of \eqref{eq:modpro} is defined by 
\begin{equation}
  \label{eq:FEM}
  \cB(u_h, v_h ) = \cF(v_h) \quad \forall v_h \in V_h.
\end{equation}

\section{A Posteriori Error Estimator}\label{se:AEE}
Every partition $\cT_h\in\cG$ can be split into disjoint subsets $\cT_h^+ = \{
K \in \cT_h : \kappa_K > 0 \}$ and $\cT_h^0 = \{ K \in \cT_h : \kappa_K = 0
\}$. Let $\btau\in\Hdiv$ be any vector field satisfying the conditions 
\begin{align}
  \label{eq:equilib1}
  -\ddiv\btau &= \Pi f - \kappa^2 u_h \text{ in all elements } K \in \cT_h^0,
\\ \label{eq:equilib2} 
  \btau\cdot\bnu &= \PiN\gN \mbox{ on all facets } \gamma\subset\GammaN \cap \partial K,\ K\in\cT_h^0.
\end{align}
Let $\beps = \btau - \bnabla u_h$ in $\Omega$, 
$r = \Pi f - \kappa^2 u_h + \ddiv \btau$ in $\Omega$, and
$\RN = \PiN \gN - \btau \cdot \bnu$ on $\GammaN$,
then the local error indicator over an element $K\in\cT_h$ is defined to be 
\begin{equation}
\label{eq:etaK}
  \eta_K(\btau) = \left( \norm{\beps}_K^2
      + \kappa_K^{-2} \norm{r}_K^2 \right)^{1/2} 
      + \sum_{\gamma \subset \GammaN\cap\partial K} \CT^{K,\gamma} \norm{\RN}_\gamma.
\end{equation}
Observe that $r$ and $\RN$ vanish if $K\in\cT_h^0$ and that the second and the
third term is taken to be zero on such elements. The error estimator is then 
defined by 
\begin{equation}
\label{eq:eta}
  \eta^2(\btau) = \sum_{K\in\cT_h} [\eta_K(\btau) + \osc_K(f,\gN)]^2
\end{equation}
where the oscillation term is given by 
\begin{equation*}
\osc_K(f,\gN) = \min \left\{ \frac{h_K}{\pi}, \frac{1}{\kappa_K} \right\}
   \norm{f - \Pi_K f}_K
   + \sum_{\gamma \subset \GammaN\cap\partial K} \min\{\CT^{K,\gamma},\oCT^{K,\gamma}\} \norm{\gN - \Pi_\gamma \gN}_{\gamma}
\end{equation*}
and the constants 
\begin{align*}
  \left(\CT^{K,\gamma}\right)^2 &=  \frac{|\gamma|}{d |K|} \frac{1}{\kappa_K}
    \sqrt{(2h_K)^2 + (d/\kappa_K)^2},
\\
  \left(\oCT^{K,\gamma}\right)^2 &= \frac{|\gamma|}{d |K|} \min\{h_K/\pi,\kappa_K^{-1}\}
    \left(2h_K + d \min\{h_K/\pi,\kappa_K^{-1}\} \right)
\end{align*}
arose in the corrigendum of \cite[Lemma~1]{AinVej:2014}.

The following result, based on \cite[Lemma~2]{AinVej:2014}, shows that the estimator 
provides an upper bound on the error: 
\begin{theorem}
\label{th:main}
Let $u_h \in V$ be arbitrary. 
If $\btau \in\Hdiv$ satisfies equilibration conditions \eqref{eq:equilib1}--\eqref{eq:equilib2}
then
\begin{equation}
\label{eq:upperbound}
\trinorm{u - u_h} \leq 
  \eta(\btau). 
\end{equation}
\end{theorem}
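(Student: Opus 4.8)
The plan is to derive an exact representation of the error in terms of the computable residuals $\beps$, $r$, and $\RN$, and then bound it by $\eta(\btau)$ using Cauchy--Schwarz together with Poincar\'e and trace inequalities. Write $e = u - u_h \in V$ for the error. Since $u$ solves \eqref{eq:weakf} and $u_h \in V$ is arbitrary, for every $v \in V$ we have $\cB(e,v) = \cF(v) - \cB(u_h,v)$. First I would substitute the definitions of $\cB$ and $\cF$, replace $\bnabla u_h = \btau - \beps$, and apply Green's formula to $(\btau, \bnabla v)$. Because $v$ vanishes on $\GammaD$, only the boundary integral over $\GammaN$ survives, and after inserting $\Pi f$ and $\PiN \gN$ and collecting terms this yields
\begin{equation*}
\cB(e,v) = (\beps, \bnabla v) + (r, v) + (f - \Pi f, v) + (\RN, v)_{\GammaN} + (\gN - \PiN\gN, v)_{\GammaN}.
\end{equation*}
The equilibration conditions \eqref{eq:equilib1}--\eqref{eq:equilib2} guarantee that $r$ and $\RN$ vanish on every $K\in\cT_h^0$ and on its Neumann facets, so the weighted residual terms appear only where $\kappa_K>0$ and the factor $\kappa_K^{-1}$ is finite.

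The next step is to choose $v=e$, so that the left-hand side becomes $\trinorm{e}^2$, and to estimate the five terms element by element. The flux term and the volume residual term I would combine on each $K$ by a Cauchy--Schwarz inequality in $\R^2$, pairing $(\norm{\beps}_K, \kappa_K^{-1}\norm{r}_K)$ against $(\norm{\bnabla e}_K, \kappa_K\norm{e}_K)$, to get
\begin{equation*}
(\beps, \bnabla e)_K + (r, e)_K \leq \left(\norm{\beps}_K^2 + \kappa_K^{-2}\norm{r}_K^2\right)^{1/2}\trinorm{e}_K,
\end{equation*}
since $\norm{\bnabla e}_K^2 + \kappa_K^2\norm{e}_K^2 = \trinorm{e}_K^2$. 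For the data oscillation term I would exploit the $L^2(K)$-orthogonality of $\Pi_K$ to replace $e$ by $e-\Pi_K e$, and bound $\norm{e - \Pi_K e}_K$ either by the Poincar\'e estimate $(h_K/\pi)\norm{\bnabla e}_K$ or simply by $\kappa_K^{-1}(\kappa_K\norm{e}_K)$, whichever is smaller, producing the factor $\min\{h_K/\pi, \kappa_K^{-1}\}$. The two Neumann boundary terms are handled analogously on each facet $\gamma$: a trace inequality $\norm{e}_\gamma \leq \CT^{K,\gamma}\trinorm{e}_K$ controls the $\RN$ term, while for the $\gN$-oscillation I would first project out $\Pi_\gamma e$ and then use either the same trace constant or the sharper $\oCT^{K,\gamma}$, again taking the minimum.

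The technical heart, and the step I expect to be the main obstacle, is the pair of $\kappa$-robust, fully explicit trace inequalities with constants $\CT^{K,\gamma}$ and $\oCT^{K,\gamma}$; these are precisely the estimates established in the corrigendum of \cite[Lemma~1]{AinVej:2014}, so I would invoke them directly rather than re-derive them. Everything else is elementary. Collecting the element-wise bounds, every term shares a common factor $\trinorm{e}_K$, so the contribution of $K$ to $\trinorm{e}^2$ is at most $[\eta_K(\btau) + \osc_K(f,\gN)]\,\trinorm{e}_K$. Summing over $K\in\cT_h$ and applying the discrete Cauchy--Schwarz inequality together with $\sum_K \trinorm{e}_K^2 = \trinorm{e}^2$ gives
\begin{equation*}
\trinorm{e}^2 \leq \sum_{K\in\cT_h}[\eta_K(\btau) + \osc_K(f,\gN)]\,\trinorm{e}_K \leq \eta(\btau)\,\trinorm{e},
\end{equation*}
and dividing by $\trinorm{e}$ yields the claimed bound \eqref{eq:upperbound}. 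Note that Galerkin orthogonality is nowhere used, which is exactly why the estimate holds for arbitrary $u_h\in V$.
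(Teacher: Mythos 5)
Your proposal is correct and follows essentially the same route as the paper's proof: the same error representation obtained from the weak formulation and Green's formula, the same elementwise Cauchy--Schwarz pairing of $(\norm{\beps}_K,\kappa_K^{-1}\norm{r}_K)$ against $(\norm{\bnabla e}_K,\kappa_K\norm{e}_K)$, the same treatment of the oscillation terms and the trace inequality from the corrigendum of \cite[Lemma~1]{AinVej:2014}, and the same use of the equilibration conditions to kill $r$ and $\RN$ on $\cT_h^0$ before the final discrete Cauchy--Schwarz step. The only cosmetic differences are that you write the identity globally rather than elementwise and fix $v=e$ early, whereas the paper keeps $v$ general until the end; nothing substantive changes.
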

\begin{proof}
The weak formulation \eqref{eq:weakf} and the divergence theorem yield identity
\begin{multline}
\label{eq:errid}
  \cB(u - u_h,v) 
= \sum\limits_{K\in\cT_h} \left[
  (\beps, \bnabla v)_K
+ ( r, v )_K
+ \sum\limits_{\gamma\subset\GammaN\cap\partial K}
  ( \RN, v )_\gamma
\right.
\\
\left.
+ (f - \Pi_K f, v)_K
+ \sum\limits_{\gamma\subset\GammaN\cap\partial K} (\gN - \Pi_\gamma \gN, v)_\gamma
\right]
\end{multline}
for all $v \in V$.
 The last two terms are estimated in the same way as in the proof of \cite[Lemma~2]{AinVej:2014}:
\begin{equation}
  \label{eq:oscKest}
  (f - \Pi_K f, v)_K
+ \sum\limits_{\gamma\subset\GammaN\cap\partial K} (\gN - \Pi_\gamma \gN, v)_\gamma
 \leq
 \osc_K(f,\gN) \trinorm{v}_K.
\end{equation}
For elements $K\in\cT_h^+$ we bound
\begin{gather}
  \label{eq:epsest}
  ( \beps, \bnabla v)_K + ( r, v )_K
  \leq \left( \norm{\beps}_K^2 + \kappa_K^{-2}\norm{r}_K^2 \right)^{1/2} \trinorm{v}_K,
\\
  \label{eq:RNest}
  ( \RN, v)_\gamma \leq \CT^{K,\gamma} \norm{\RN}_\gamma \trinorm{v}_K,
\end{gather}
where the trace inequality \cite[Lemma~1]{AinVej:2014} is employed.

Due to equilibration conditions \eqref{eq:equilib1}--\eqref{eq:equilib2},
we arrive at
\begin{multline*}
  \cB(u - u_h,v)
  \leq \sum_{K\in\cT_h^+} \left[
    \left( \norm{\beps}_K^2 + \kappa_K^{-2}\norm{r}_K^2 \right)^{1/2}
    + \sum\limits_{\gamma\subset\GammaN\cap\partial K} \CT^{K,\gamma} \norm{\RN}_\gamma
  \right] \trinorm{v}_K
\\
  + \sum_{K\in\cT_h^0} 
   \norm{\beps}_K \trinorm{v}_K
  + \sum\limits_{K\in\cT_h} \osc_K(f,\gN) \trinorm{v}_K
=
  \sum_{K\in\cT_h} [\eta_K(\btau) + \osc_K(f,\gN)] \trinorm{v}_K
\end{multline*}
Cauchy--Schwarz inequality, notation \eqref{eq:eta}, and choice $v=u-u_h$ finish the proof.
\end{proof}
It will not have escaped the reader's notice that nothing in the above argument
relies on $u_h$ being a finite element approximation. Consequently, the upper
bound presented in Theorem~\ref{th:main} holds true for arbitrary conforming
approximation $u_h\in V$. However, the local efficiency and robustness results
proved in Theorem~\ref{th:loceff} will require $u_h$ to be a Galerkin finite
element approximation exactly satisfying the condition \eqref{eq:FEM}.

\section{Flux Reconstruction by Patchwise Minimization}\label{se:locmin}
Let $\cN_h$ denote the nodes in the partition $\cT_h$.  In particular, given a
node $\bn\in\cN_h$, the subset $\cT_\bn = \{ K \in \cT_h : \bn \in K \}$
consists of elements that touch the node, while
$\cEnBN=\{\gamma\subset\GammaN: \bn\in\gamma\}$ consists of facets on the
Neumann boundary $\GammaN$ which touch the node $\bn$. 

The flux reconstructions used in the current work are constructed over patch
$\omega_\bn = \operatorname{int} \bigcup \cT_\bn$. Specifically, let 
\begin{multline}
\label{eq:defWomegan}
  \bW(\omega_\bn) = \{ \btau \in \Hdiv[\omega_\bn] : \btau|_K \in \mathbf{RT}_1(K),\
    \btau\cdot\bnu_\bn = 0 \text{ on } \gamma \in \cEnBE
    \},
\end{multline}
where $\cEnBE=\{\gamma\subset\partial\omega_\bn : \bn \not\in \gamma \}$,
$\bnu_\bn$ denotes the unit outward facing normal vector on the boundary of the
patch $\omega_\bn$, and $\mathbf{RT}_1(K) = [\mathbb{P}_1(K)]^d \oplus \bfx
\mathbb{P}_1(K)$ is the standard Raviart--Thomas space.

Let $\btau_\bn \in \bW(\omega_\bn)$ denote the minimizer of the quadratic functional
\begin{multline}
  \label{eq:taunmin}
  E_\bn(\btau_\bn) = \norm{ \btau_\bn - \theta_\bn \bnabla u_h }_{\omega_\bn}^2 \\   
  + \norm{ \kappa^{-1} \left[ \Pi(\theta_\bn (\Pi f - \kappa^2 u_h))
  - \bnabla \theta_\bn \cdot \bnabla u_h + \ddiv \btau_\bn \right] }_{\omega_\bn^+}^2 \\
  + \norm{ \CTN \left[ \PiN(\theta_\bn \PiN \gN) - \btau_\bn \cdot\bnu \right] }_\GammaNnp^2, 
\end{multline}
over $\btau_\bn\in\bW(\omega_\bn)$ satisfying constraints
\begin{align}
  \label{eq:tauconstr1}
  \Pi(\theta_\bn (\Pi f - \kappa^2 u_h))
    - \bnabla \theta_\bn \cdot \bnabla u_h + \ddiv \btau_\bn
  &= 0 
  \quad\text{in }\omega_\bn^0,
\\  
  \label{eq:tauconstr2}
  \PiN(\theta_\bn \PiN \gN) - \btau_\bn \cdot\bnu &= 0
  \quad\text{on }\GammaNnz,
\end{align}
where $\theta_\bn$ is the usual piecewise affine and continuous hat function satisfying
$\theta_\bn(\bn') = \delta_{\bn\bn'}$ for all nodes $\bn' \in \cN_h$,
$\omega_\bn^+ = \operatorname{int} \bigcup (\cT_\bn \cap \cT_h^+)$,
$\omega_\bn^0 = \operatorname{int} \bigcup (\cT_\bn \cap \cT_h^0)$,
$\GammaNnp = \bigcup \{ \gamma \in \cEnBN : \kappa_{K_\gamma} > 0 \}$,
$K_\gamma$ is the element adjacent to the facet $\gamma$,
$\GammaNnz = \bigcup \{ \gamma \in \cEnBN : \kappa_{K_\gamma} = 0 \}$,
and
$\CTN$ stands for piecewise constant function over facets defined as
$\CTN|_\gamma = \CT^{K_\gamma,\gamma}$ for all facets $\gamma \subset \GammaN$
such that $\kappa_{K_\gamma} > 0$.

The minimizer of \eqref{eq:taunmin} 
satisfying constraints \eqref{eq:tauconstr1}--\eqref{eq:tauconstr2} could, 
equally well, be characterised as the unique solution of the following problem:
Find $\btau_\bn \in \bW(\omega_\bn)$ and Lagrange multipliers
$q_h \in \mathbb{P}_1^*(\omega_\bn^0)$ and $d_h \in \mathbb{P}_1^*(\GammaNnz)$
satisfying
\begin{multline}
  \label{eq:taun}
  (\btau_\bn, \bw_h)_{\omega_\bn} 
  + (\kappa^{-2} \ddiv \btau_\bn, \ddiv \bw_h)_{\omega_\bn^+}
  + ( (\CTN)^2 \btau_\bn\cdot\bnu,\bw_h\cdot\bnu)_\GammaNnp
\\
  + (q_h, \ddiv \bw_h)_{\omega_\bn^0} 
  + (d_h, \bw_h \cdot \bnu)_\GammaNnz
  = (\theta_\bn \bnabla u_h, \bw_h)_{\omega_\bn}
\\  
  - \left( \kappa^{-2}\left[ \theta_\bn (\Pi f - \kappa^2 u_h) - \bnabla \theta_\bn \cdot \bnabla u_h \right], \ddiv \bw_h \right)_{\omega_\bn^+}
  + ( (\CTN)^2 \theta_\bn\PiN\gN,\bw_h\cdot\bnu)_\GammaNnp
\end{multline}
for all $\bw_h \in \bW(\omega_\bn)$ and
\begin{align}
 \label{eq:taunconstr1}
 (\ddiv\btau_\bn, \varphi_h)_{\omega_\bn^0} 
 &= (\bnabla \theta_\bn \cdot \bnabla u_h - \theta_\bn (\Pi f - \kappa^2 u_h), \varphi_h)_{\omega_\bn^0}
 \quad\forall \varphi_h \in \mathbb{P}_1^*(\omega_\bn^0),
\\
 \label{eq:taunconstr2}
 (\btau_\bn \cdot \bnu,\psi_h)_\GammaNnz
 &= (\theta_\bn\PiN\gN, \psi_h)_\GammaNnz
 \quad\forall \psi_h \in \mathbb{P}_1^*(\GammaNnz),
\end{align}
where $\mathbb{P}_1^*(\omega_\bn^0)$ and $\mathbb{P}_1^*(\GammaNnz)$ 
are spaces of discontinuous and piecewise affine functions over $\omega_\bn^0$
and $\GammaNnz$, respectively.

The condition in definition \eqref{eq:defWomegan} imposed on the facets
$\cEnBN$ means that $\btau_\bn$ can be extended by zero onto $\Omega$ thereby
obtaining a vector field in $\Hdiv$, which we again denote by $\btau_\bn$.
With this convention in place, the reconstructed flux $\btau \in \Hdiv$ is
taken to be the sum 
\begin{equation}
  \label{eq:tau}
  \btau = \sum_{\bn \in \cN_h} \btau_\bn.
\end{equation}
The resulting globally defined vector field $\btau$ can be used in
\eqref{eq:upperbound} to obtain an upper bound on the energy norm of the error,
because it satisfies the equilibration conditions as stated in the following lemma.
\begin{lemma}
Reconstructed flux $\btau\in\Hdiv$ given by \eqref{eq:tau} satisfies
equilibration conditions \eqref{eq:equilib1}--\eqref{eq:equilib2}.
\end{lemma}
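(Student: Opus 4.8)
The plan is to verify the two pointwise equilibration identities \eqref{eq:equilib1} and \eqref{eq:equilib2} by unfolding the assembly $\btau=\sum_{\bn\in\cN_h}\btau_\bn$ and exploiting the partition-of-unity structure of the hat functions $\theta_\bn$. Membership $\btau\in\Hdiv$ itself requires no separate argument, since it has already been arranged by the zero–normal–trace condition on $\cEnBE$ in \eqref{eq:defWomegan}, which lets each $\btau_\bn$ be extended by zero. The two facts I would use repeatedly are: on a fixed element $K$ only the $d+1$ vertices $\bn$ of $K$ give $\btau_\bn|_K\neq 0$ (the others have support in patches not containing $K$); and $\sum_\bn\theta_\bn\equiv 1$, so that $\sum_{\bn\in K}\bnabla\theta_\bn|_K=0$ and, on any facet $\gamma$, $\sum_{\bn\in\gamma}\theta_\bn|_\gamma\equiv 1$ because the hat function of the vertex opposite $\gamma$ vanishes on $\gamma$.

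First I would establish \eqref{eq:equilib1}. Fix $K\in\cT_h^0$, so $\kappa_K=0$ and $K\subset\omega_\bn^0$ for every vertex $\bn$ of $K$; hence the pointwise constraint \eqref{eq:tauconstr1} holds on $K$ and gives $\ddiv\btau_\bn|_K=\bnabla\theta_\bn\cdot\bnabla u_h-\Pi(\theta_\bn(\Pi f-\kappa^2u_h))$ there. Summing over the vertices of $K$, the first term telescopes to zero via $\sum_{\bn\in K}\bnabla\theta_\bn=0$, while for the second term I would use $(\Pi f-\kappa^2u_h)|_K=\Pi_K f$ (since $\kappa_K=0$) together with linearity of $\Pi_K$ and $\sum_{\bn\in K}\theta_\bn|_K=1$ to collapse $\sum_{\bn\in K}\Pi_K(\theta_\bn\Pi_K f)=\Pi_K(\Pi_K f)=\Pi_K f$. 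This yields $-\ddiv\btau|_K=\Pi_K f=(\Pi f-\kappa^2u_h)|_K$, which is exactly \eqref{eq:equilib1}.

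Next I would treat \eqref{eq:equilib2}. Fix a facet $\gamma\subset\GammaN\cap\partial K$ with $K\in\cT_h^0$; since $\gamma$ lies on $\partial\Omega$, its only adjacent element is $K$, so only the $d+1$ vertices of $K$ can contribute to $\btau\cdot\bnu|_\gamma$. For the unique vertex $\bn_0$ of $K$ not lying on $\gamma$, the facet $\gamma$ is an exterior facet of $\omega_{\bn_0}$ not containing $\bn_0$, so the boundary condition in \eqref{eq:defWomegan} forces $\btau_{\bn_0}\cdot\bnu|_\gamma=0$. For each of the $d$ vertices $\bn\in\gamma$ we have $\gamma\in\cEnBN$ with $\kappa_{K_\gamma}=\kappa_K=0$, hence $\gamma\subset\GammaNnz$, and the constraint \eqref{eq:tauconstr2} gives $\btau_\bn\cdot\bnu|_\gamma=\Pi_\gamma(\theta_\bn\Pi_\gamma\gN)$. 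Summing and using $\sum_{\bn\in\gamma}\theta_\bn|_\gamma=1$ with linearity of $\Pi_\gamma$ produces $\btau\cdot\bnu|_\gamma=\Pi_\gamma(\Pi_\gamma\gN)=\Pi_\gamma\gN=(\PiN\gN)|_\gamma$, which is \eqref{eq:equilib2}.

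I expect the only delicate point to be the incidence bookkeeping rather than any computation: one must pin down that $K\subset\omega_\bn^0$ precisely when $\kappa_K=0$ and $\bn$ is a vertex of $K$, and that a Neumann facet of a $\kappa=0$ element lands in $\GammaNnz$ for its on-facet vertices while being annihilated through $\cEnBE$ for the opposite vertex. Once these membership facts are fixed, the partition-of-unity telescoping in both identities is routine, and the lemma follows.
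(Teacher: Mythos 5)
Your proof is correct and takes essentially the same approach as the paper: both verify \eqref{eq:equilib1}--\eqref{eq:equilib2} by summing the local constraints over the vertices of $K$ (resp.\ of $\gamma$) and telescoping with the partition of unity $\sum_\bn \theta_\bn = 1$, the only cosmetic difference being that you invoke the strong constraints \eqref{eq:tauconstr1}--\eqref{eq:tauconstr2} pointwise while the paper uses the weak forms \eqref{eq:taunconstr1}--\eqref{eq:taunconstr2} against $\mathbb{P}_1$ test functions and then upgrades to pointwise identities since the residuals lie in $\mathbb{P}_1(K)$ and $\mathbb{P}_1(\gamma)$. Your explicit observation that the vertex of $K$ opposite $\gamma$ contributes nothing because $\gamma \in \cEnBE$ forces $\btau_{\bn_0}\cdot\bnu = 0$ is a bookkeeping detail the paper leaves implicit, but it is the same mechanism.
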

\begin{proof}
Let $K\in\cT_h^0$.
Equality \eqref{eq:taunconstr1}, definition \eqref{eq:tau}, and partition of unity $\theta_\bn$
yield
$$
  0 = \sum_{\bn \in \cN_K} (\theta_\bn (\Pi_K f - \kappa_K^2 u_h) 
    - \bnabla \theta_\bn \cdot \bnabla u_h + \ddiv \btau_\bn, \varphi_h)_K
  = 
  (\Pi_K f - \kappa_K^2 u_h + \ddiv \btau, \varphi_h)_K
$$
for all $\varphi_h \in \mathbb{P}_1(K)$. Since $\Pi_K f - \kappa_K^2 u_h|_K + \ddiv \btau|_K \in \mathbb{P}_1(K)$,
equilibration condition \eqref{eq:equilib1} follows.

Similarly, given a facet $\gamma \subset \GammaN\cap\partial K$, the equality \eqref{eq:taunconstr2} implies
$$
  0 = \sum_{\bn \in \cN_\gamma}
   (\theta_\bn\Pi_\gamma\gN - \btau_\bn \cdot \bnu, \psi_h)_\gamma
  = (\Pi_\gamma\gN - \btau \cdot \bnu, \psi_h)_\gamma
$$
for all $\psi_h \in \mathbb{P}_1(\gamma)$. Equilibration condition \eqref{eq:equilib2} then follows, 
because $\Pi_\gamma\gN - \btau \cdot \bnu|_\gamma \in \mathbb{P}_1(\gamma)$.
\end{proof}

The next two sections are concerned with showing that reconstructed flux $\btau$ 
defined in \eqref{eq:tau} yields locally
efficient and robust error indicators. 
The main idea used in the proof is based on comparing $\btau$ with two
judiciously chosen flux reconstructions $\bsigma^{(1)}_{\widetilde K}$ and
$\bsigma^{(2)}_{\widetilde K}$. Each of these reconstructions is defined in the
neighbourhood $\widetilde K$ of the element $K$, see \eqref{eq:patchK}.  While
the flux reconstruction $\bsigma^{(1)}_{\widetilde K}$ is based on equilibrated
interface fluxes $g_K$ introduced in \cite{AinBab:1999} and analysed in
\cite{AinVej:2014}, the second reconstruction $\bsigma^{(2)}_{\widetilde K}$ is
new.

\section{The First Auxiliary Flux Reconstruction}
\label{se:sig1}

In this section we introduce the auxiliary flux reconstruction $\bsigma^{(1)}_{\widetilde K}$ and prove its properties.
This flux reconstruction is based on equilibrated interface fluxes. 
If $u_h \in V_h$ is the Galerkin solution given by \eqref{eq:FEM} then results of \cite{AinOde:2000} guarantee the existence of interface fluxes $g_K$ satisfying for all elements $K\in\cT_h$ the following properties
\begin{alignat}{2}
\nonumber 
g_K|_\gamma &\in \mathbb{P}_1(\gamma)
&\quad&\text{for all facets } \gamma \subset \partial K,
\\ \label{eq:gKNeu}
g_K &= \Pi_\gamma \gN
&\quad&\text{for all facets } \gamma \subset \GammaN\cap\partial K,
\\
\label{eq:gKconsist}
  g_K + g_{K'} &= 0
  &\quad&\text{on facets } \gamma = \partial K \cap \partial K'
  \text{ for some } K' \in \cT_h,
\end{alignat}
and equilibration condition
\begin{equation}
\label{eq:gKequilib}
 \int_K f \theta_\bn \dx 
  - \cB_K(u_h,\theta_\bn)
  + \int_{\partial K} g_K \theta_\bn \ds = 0
  \quad\text{for all } 
  \bn \in \cN_K,
\end{equation}
where $\cN_K$ stands for the set of $d+1$ vertices of $K$.
These fluxes do not yield robust a posteriori error estimators for large values of the reaction coefficient $\kappa$, as it was shown in \cite{AinBab:1999}. However, we will use them only in elements where $h_K \kappa_K \leq 1$.

Below, we will utilize two estimates from \cite{AinVej:2014}.
First, quantity $R = g_K - \bnabla u_h \cdot \bnu_K$ defined on $\partial K$ for all elements $K \in \cT_h$ 
satisfies 
\begin{equation}
  \label{eq:Rest}
  \norm{ R }_{\partial K} 
  \leq C \left[ h_K^{-1/2} \trinorm{u-u_h}_{\widetilde K} + h_K^{1/2} \norm{f - \Pi f}_{\widetilde K} +
    \norm{\gN - \Pi_{\GammaN} \gN}_{\GammaN\cap\partial K} \right],
%
\end{equation}
see \cite[estimate (31)]{AinVej:2014}.
Second, residual $r_h = \Pi f - \kappa^2 u_h + \Delta u_h$ is bounded as
\begin{equation}
  \label{eq:rKest}
  \norm{ r_h }_K \leq C \left[ \min\{h_K,\kappa_K^{-1}\}^{-1} \trinorm{u - u_h}_K + \norm{f - \Pi_K f}_K \right],
\end{equation}
see \cite[estimate (29)]{AinVej:2014} and also \cite[Lemma~5]{AinBab:1999}.

The definition of the first auxiliary flux reconstruction proceeds as follows.
If an element $K\in\cT_h$ is such that $\kappa_K h_K \leq 1$, then we define
\begin{equation}
\label{eq:sigma1}
  \bsigma^{(1)}_{\widetilde K} = \sum_{\bn \in \cN_K} \bsigma_\bn^{(1)},
\end{equation}
where $\bsigma_\bn^{(1)}$ is given piecewise as
\begin{equation}
\label{eq:sigma1n}
  \bsigma^{(1)}_\bn|_K = 
    \bsigma^{(1)}_{\bn,K}  \text{ for all } K \in \cT_\bn. \\
\end{equation}
and vector fields $\bsigma^{(1)}_{\bn,K}$ are determined by the following lemma.

\begin{lemma}
\label{le:sigmanK}
Let $u_h \in V_h$ satisfies \eqref{eq:FEM}.
Let $K\in\cT_h$ be an element and let $\bn \in \cN_K$ be its vertex.
Then there exists $\bsigma^{(1)}_{\bn,K} \in \mathbf{RT}_1(K)$ such that
\begin{equation}
  \label{eq:sigma1nKnuK}
  \bsigma^{(1)}_{\bn,K} \cdot \bnu_K = \Pi_\gamma (\theta_\bn g_K)
  \quad\text{on facets } \gamma \subset \partial K
\end{equation}
and 
\begin{equation}
  \label{eq:divsigma1nK}
  -\ddiv\bsigma^{(1)}_{\bn,K} = 
      \Pi_K \theta_\bn (\Pi_K f - \kappa^2 u_h) - \bnabla \theta_\bn \cdot \bnabla u_h
  \quad\text{in }K.
\end{equation}
\end{lemma}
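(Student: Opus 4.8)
The plan is to recognize the statement as a pure existence question for a Raviart--Thomas field on the single simplex $K$: we seek $\bsigma^{(1)}_{\bn,K}\in\mathbf{RT}_1(K)$ whose normal trace on each facet equals the prescribed datum $\Pi_\gamma(\theta_\bn g_K)$ and whose divergence equals $-\phi$, where $\phi:=\Pi_K\big(\theta_\bn(\Pi_K f-\kappa_K^2u_h)\big)-\bnabla\theta_\bn\cdot\bnabla u_h$ is the right-hand side of \eqref{eq:divsigma1nK}. First I would check that the prescribed data lie in the correct polynomial spaces. Since $g_K|_\gamma\in\mathbb{P}_1(\gamma)$ and $\theta_\bn|_\gamma\in\mathbb{P}_1(\gamma)$, the normal datum $\Pi_\gamma(\theta_\bn g_K)$ lies in $\mathbb{P}_1(\gamma)$; since $\Pi_K f,\ u_h|_K,\ \theta_\bn|_K\in\mathbb{P}_1(K)$ and $\bnabla\theta_\bn,\bnabla u_h$ are constant on $K$, the target $\phi$ lies in $\mathbb{P}_1(K)$. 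These are precisely the ranges of the normal-trace operator and of $\ddiv$ on $\mathbf{RT}_1(K)$.

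Next I would invoke the standard solvability property of $\mathbf{RT}_1(K)$ on a simplex: the map $\btau\mapsto\big((\btau\cdot\bnu_K|_\gamma)_{\gamma\subset\partial K},\,\ddiv\btau\big)$ into $\prod_\gamma\mathbb{P}_1(\gamma)\times\mathbb{P}_1(K)$ has image exactly the subspace singled out by the single divergence-theorem constraint $\int_K\ddiv\btau\dx=\sum_\gamma\int_\gamma\btau\cdot\bnu_K\ds$. In other words, prescribing the facet normal moments against $\mathbb{P}_1(\gamma)$ together with a divergence in $\mathbb{P}_1(K)$ is solvable if and only if the zeroth-moment compatibility holds. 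This follows from unisolvence of the normal and interior degrees of freedom of $\mathbf{RT}_1$ together with a dimension count: restricting $\ddiv$ to the fields with vanishing normal trace ($d$-dimensional) gives an isomorphism onto the mean-zero subspace $\{\psi\in\mathbb{P}_1(K):\int_K\psi=0\}$ (also $d$-dimensional), so the only obstruction is the scalar compatibility. Thus the whole lemma reduces to verifying
\[
  -\int_K\phi\dx=\sum_{\gamma\subset\partial K}\int_\gamma\Pi_\gamma(\theta_\bn g_K)\ds .
\]

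To verify this identity I would use two elementary facts about the $L^2$-projectors: $\int_K\Pi_K w\dx=\int_K w\dx$ and $\int_K\Pi_K f\,\theta_\bn\dx=\int_K f\,\theta_\bn\dx$ (because $1,\theta_\bn\in\mathbb{P}_1(K)$), and $\int_\gamma\Pi_\gamma(\theta_\bn g_K)\ds=\int_\gamma\theta_\bn g_K\ds$ (because $1\in\mathbb{P}_1(\gamma)$). These reduce the left-hand side to $\int_K\theta_\bn f\dx-\kappa_K^2\int_K\theta_\bn u_h\dx-\int_K\bnabla\theta_\bn\cdot\bnabla u_h\dx$ and the right-hand side to $\int_{\partial K}\theta_\bn g_K\ds$. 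Expanding $\cB_K(u_h,\theta_\bn)$ and using that $\bnabla u_h\cdot\bnabla\theta_\bn$ is constant on $K$, the desired identity is then \emph{exactly} the equilibration condition \eqref{eq:gKequilib}, which holds at every vertex $\bn\in\cN_K$ because $u_h$ is the Galerkin solution \eqref{eq:FEM}. Hence the compatibility holds and $\bsigma^{(1)}_{\bn,K}$ exists.

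The only genuinely delicate point is the abstract solvability statement for $\mathbf{RT}_1(K)$; once it is in hand, the rest is a short computation. I would therefore spend the most care stating that result precisely: confirming that the prescribed data match the normal-trace space $\mathbb{P}_1(\gamma)$ and the divergence space $\mathbb{P}_1(K)$, and that there is exactly one compatibility condition (the zeroth moment), so that no hidden obstruction survives beyond \eqref{eq:gKequilib}. The manipulation of the projectors and the identification with the equilibration identity is routine and parallels the bookkeeping already used for the equilibration lemma.
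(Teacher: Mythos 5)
Your proposal is correct and takes essentially the same approach as the paper: both reduce the existence of $\bsigma^{(1)}_{\bn,K}$ to a single zero-mean compatibility condition via unisolvence of the $\mathbf{RT}_1(K)$ degrees of freedom, the only difference being presentational (the paper trades the interior degrees of freedom for divergence moments against $\mathbb{P}_1(K)/\R$ by integration by parts, whereas you phrase the same linear-algebra fact as a kernel/dimension count for the map $\btau\mapsto(\btau\cdot\bnu_K,\ddiv\btau)$). The verification of the compatibility condition, using that the projectors $\Pi_K$ and $\Pi_\gamma$ preserve moments against $\mathbb{P}_1$ and then invoking the equilibration condition \eqref{eq:gKequilib}, matches the paper's computation exactly.
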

\begin{proof}
To establish the existence and uniqueness of $\bsigma^{(1)}_{\bn,K}$ we recall 
\cite{BreFor:1991} that $\mathbf{RT}_1(K)$ is unisolvent with respect to the
degrees of freedom defined by 
\begin{equation*}
   \bsigma\to\int_K \bsigma\cdot\b{v}, \quad\b{v}\in\mathbb{P}_0^d(K)
\end{equation*}
and 
\begin{equation}\label{mark2}
   \bsigma\to\int_\gamma \bn\cdot\bsigma w, \quad w\in\mathbb{P}_1(\gamma). 
\end{equation}
Observing that $\nabla:\mathbb{P}_1(K)/\R\to\mathbb{P}_0^d(K)$ is surjective, we
may rewrite the former set of degrees of freedom in the equivalent form 
\begin{equation*}
   \bsigma\to\int_K \bsigma\cdot\nabla{v}, \quad v\in\mathbb{P}_1(K)/\R, 
\end{equation*}
which, on integrating by parts and using the second set of degrees of freedom,
shows that $\mathbf{RT}_1(K)$ is unisolvent with respect to the degrees of 
freedom defined by~\eqref{mark2} augmented with the following 
\begin{equation*}
   \bsigma\to\int_K \ddiv\bsigma v, \quad v\in\mathbb{P}_1(K)/\R. 
\end{equation*}
Since the data in conditions
\eqref{eq:sigma1nKnuK} belong to $\mathbb{P}_1(\gamma)$
and since the equilibration condition \eqref{eq:gKequilib} implies
the following compatibility condition
\begin{multline*}
     \int_K \Pi_K \theta_\bn (\Pi_K f - \kappa^2 u_h) \dx 
   - \int_K \bnabla \theta_\bn \cdot \bnabla u_h \dx 
  + \sum_{\gamma \subset \partial K} \int_\gamma \Pi_\gamma (\theta_\bn g_K) \ds 
\\  
  =
     \int_K \theta_\bn (\Pi_K f - \kappa^2 u_h) \dx 
   - \int_K \bnabla \theta_\bn \cdot \bnabla u_h \dx 
  + \int_{\partial K} \theta_\bn g_K \ds 
\\  
  =
\int_K f \theta_\bn \dx 
  - \cB_K(u_h,\theta_\bn)
  + \int_{\partial K} g_K \theta_\bn \ds = 0,
\end{multline*}
we deduce that $\bsigma^{(1)}_{\bn,K}$ exists and is unique. 
\end{proof}

The following lemma shows that vector fields $\bsigma^{(1)}_\bn$ lie in $\bW(\omega_\bn)$.
\begin{lemma}
\label{le:sigma1inW}
Let $u_h \in V_h$ satisfies \eqref{eq:FEM}.
Let $\bn \in \cN_h$ be a vertex
and let $\bsigma^{(1)}_\bn$ be defined by \eqref{eq:sigma1n}. Then $\bsigma^{(1)}_\bn \in \bW(\omega_\bn)$ and $\bsigma^{(1)}_\bn \cdot \bnu_\bn = \Pi_\gamma (\theta_\bn \Pi_\gamma g_N)$ on facets $\gamma \in \cEnBN$.
\end{lemma}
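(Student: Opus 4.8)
The plan is to verify, one at a time, the three conditions defining the space $\bW(\omega_\bn)$ in \eqref{eq:defWomegan}, and then to read off the normal trace on the Neumann facets. Elementwise membership $\bsigma^{(1)}_\bn|_K = \bsigma^{(1)}_{\bn,K} \in \mathbf{RT}_1(K)$ is immediate from Lemma~\ref{le:sigmanK}, so the only substantive points are $\Hdiv$-conformity across the interior facets of the patch and the boundary conditions on $\partial\omega_\bn$. I would begin by classifying the facets of the patch elements: each $K\in\cT_\bn$ has exactly one facet lying opposite $\bn$ (on which $\bn\notin\gamma$) together with $d$ facets containing $\bn$. A short combinatorial observation shows that every \emph{interior} facet of $\omega_\bn$ must contain $\bn$: if $\gamma=\partial K\cap\partial K'$ with $K,K'\in\cT_\bn$, then $\bn$ is a vertex of both, and since the unique vertex of $K$ off $\gamma$ and the unique vertex of $K'$ off $\gamma$ are distinct, $\bn$ cannot be either one and hence lies on $\gamma$. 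Consequently the facets with $\bn\notin\gamma$ are exactly the exterior facets forming $\cEnBE$, while the exterior facets that do contain $\bn$ lie on $\partial\Omega$.

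For $\Hdiv$-conformity I would take an interior facet $\gamma=\partial K\cap\partial K'$ and apply the normal-trace formula \eqref{eq:sigma1nKnuK} from both sides. Since $\bnu_K=-\bnu_{K'}$ on $\gamma$, continuity of the normal component amounts to $\bsigma^{(1)}_{\bn,K}\cdot\bnu_K + \bsigma^{(1)}_{\bn,K'}\cdot\bnu_{K'}= \Pi_\gamma(\theta_\bn g_K) + \Pi_\gamma(\theta_\bn g_{K'}) = \Pi_\gamma(\theta_\bn(g_K+g_{K'}))$. Because $\theta_\bn$ is continuous across $\gamma$, the two projections share a common trace of $\theta_\bn$, and the consistency condition \eqref{eq:gKconsist} gives $g_K+g_{K'}=0$ on $\gamma$; hence the sum vanishes and $\bsigma^{(1)}_\bn\in\Hdiv[\omega_\bn]$.

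It then remains to treat the exterior facets. On a facet $\gamma\in\cEnBE$ we have $\bn\notin\gamma$, so $\theta_\bn$ — being affine on $\gamma$ and zero at every vertex of $\gamma$ — vanishes identically on $\gamma$; thus \eqref{eq:sigma1nKnuK} yields $\bsigma^{(1)}_\bn\cdot\bnu_\bn = \Pi_\gamma(\theta_\bn g_K)=0$, which is precisely the homogeneous condition in \eqref{eq:defWomegan}. Together with the previous two points this gives $\bsigma^{(1)}_\bn\in\bW(\omega_\bn)$. Finally, for a Neumann facet $\gamma\in\cEnBN$ the outward patch normal coincides with $\bnu_K$, so \eqref{eq:sigma1nKnuK} gives $\bsigma^{(1)}_\bn\cdot\bnu_\bn=\Pi_\gamma(\theta_\bn g_K)$; inserting $g_K=\Pi_\gamma\gN$ from \eqref{eq:gKNeu} produces the claimed identity $\bsigma^{(1)}_\bn\cdot\bnu_\bn = \Pi_\gamma(\theta_\bn\Pi_\gamma\gN)$.

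I expect no analytic difficulty here: the entire argument is a matter of matching piecewise data across facets, and the only care needed lies in the bookkeeping of orientations and in the facet classification. The crux is confirming that every interior facet of the patch contains $\bn$ — this is what makes the consistency relation \eqref{eq:gKconsist} the precise ingredient that delivers normal continuity — and that the exterior facets split cleanly into those in $\cEnBE$, where $\theta_\bn$ vanishes, and those on $\partial\Omega$, where the Neumann identity applies.
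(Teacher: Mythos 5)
Your proof is correct and takes essentially the same route as the paper's: normal continuity across interior facets of the patch via the consistency condition \eqref{eq:gKconsist}, vanishing normal trace on $\cEnBE$ because $\theta_\bn$ is affine and zero at every vertex of such a facet, and the Neumann identity via \eqref{eq:gKNeu}. Your additional combinatorial classification of the patch facets (every interior facet contains $\bn$) is correct but not actually needed, since the continuity argument works on any interior facet and the definition \eqref{eq:defWomegan} only constrains the facets of $\partial\omega_\bn$ not containing $\bn$.
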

\begin{proof}
The fact that $\bsigma^{(1)}_\bn \in \Hdiv[\omega_\bn]$ follows from the continuity of its normal components over element interfaces. Indeed, 
if $\gamma \subset \partial K \cap \partial K'$ for elements $K,K' \in \cT_\bn$ is an interior facet 
then
$$
  \bsigma^{(1)}_{\bn,K} \cdot \bnu_K + \bsigma^{(1)}_{\bn,K'} \cdot \bnu_{K'} 
  = \Pi_\gamma(\theta_\bn g_K) + \Pi_\gamma(\theta_\bn g_{K'})
  = \Pi_\gamma \theta_\bn (g_K + g_{K'}) = 0
$$
by \eqref{eq:gKconsist}.
Similarly, we verify the boundary conditions on $\partial\omega_\bn$ required in \eqref{eq:defWomegan}.
Clearly, $\bsigma^{(1)}_\bn \cdot \bnu_\bn = \Pi_\gamma(\theta_\bn g_K) = 0$ on facets $\gamma \in \cEnBE$, because $\theta_\bn = 0$ on $\gamma$,
and $\bsigma^{(1)}_\bn \cdot \bnu_\bn = \Pi_\gamma (\theta_\bn \Pi_\gamma g_N)$ on facets $\gamma \in \cEnBN$ by \eqref{eq:gKNeu}.
\end{proof}

An interesting consequence of Lemma~\ref{le:sigma1inW} and identity \eqref{eq:divsigma1nK} is that
\begin{equation}
  \label{eq:Ensig1}
  E_\bn\left(\bsigma^{(1)}_\bn\right) = \norm{\bsigma^{(1)}_\bn - \theta_\bn \bnabla u_h}_{\omega_\bn}^2.
\end{equation}
The vanishing normal components of $\bsigma^{(1)}_\bn$ on exterior facets $\gamma \in \cEnBE$ guarantee that $\bsigma^{(1)}_\bn$ can be extended by zero and as such belongs to $\Hdiv$. 
Consequently, $\bsigma^{(1)}_{\widetilde K}$ defined in \eqref{eq:sigma1} lies in $\Hdiv$ as well.
The following lemma shows that $\bsigma^{(1)}_{\widetilde K}$ satisfies equilibration 
conditions \eqref{eq:equilib1}--\eqref{eq:equilib2}.

\begin{lemma}
\label{le:sigmaprop}
Let $u_h \in V_h$ satisfies \eqref{eq:FEM}.
Let $K\in\cT_h$ be a fixed element and
let $\bsigma^{(1)}_{\widetilde K} \in \Hdiv$ be defined by \eqref{eq:sigma1}. 
Then
\begin{alignat}{2}
  \label{eq:divsigma}
  \Pi_K f - \kappa_K^2 u_h + \ddiv \bsigma^{(1)}_{\widetilde K} &= 0
  &\quad &\text{in } K,
\\
  \label{eq:sigmaNeu}
  \Pi_\gamma \gN - \bsigma^{(1)}_{\widetilde K} \cdot \bnu &= 0 &\quad &\text{on all facets }\gamma \subset \GammaN \cap \partial K.
\end{alignat}
\end{lemma}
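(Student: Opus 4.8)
The plan is to verify both equilibration identities pointwise on the fixed element $K$ by restricting the global field $\bsigma^{(1)}_{\widetilde K}$ to $K$. Since every $\bn \in \cN_K$ is a vertex of $K$, we have $\bn \in K$, hence $K \in \cT_\bn$, and so by \eqref{eq:sigma1n} and \eqref{eq:sigma1} the restriction reads $\bsigma^{(1)}_{\widetilde K}|_K = \sum_{\bn \in \cN_K} \bsigma^{(1)}_{\bn,K}$. Each summand is governed by the divergence formula \eqref{eq:divsigma1nK} and the normal-trace formula \eqref{eq:sigma1nKnuK} supplied by Lemma~\ref{le:sigmanK}, so the whole argument reduces to summing these two local identities over the $d+1$ vertices of $K$ and simplifying using the partition-of-unity properties of the hat functions $\theta_\bn$.

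For the divergence condition \eqref{eq:divsigma} I would take the divergence of $\bsigma^{(1)}_{\widetilde K}$ on $K$ and insert \eqref{eq:divsigma1nK} to obtain
\[
  \ddiv \bsigma^{(1)}_{\widetilde K}|_K
  = -\sum_{\bn \in \cN_K}
    \left[ \Pi_K\!\left( \theta_\bn (\Pi_K f - \kappa_K^2 u_h) \right)
      - \bnabla \theta_\bn \cdot \bnabla u_h \right].
\]
Moving the sum inside the projector by linearity of $\Pi_K$, the identity $\sum_{\bn \in \cN_K}\theta_\bn = 1$ on $K$ collapses the first group to $\Pi_K(\Pi_K f - \kappa_K^2 u_h)$, which equals $\Pi_K f - \kappa_K^2 u_h$ because this quantity is already affine on $K$ (both $\Pi_K f$ and $u_h|_K$ lie in $\mathbb{P}_1(K)$). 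The second group vanishes since $\sum_{\bn \in \cN_K}\bnabla\theta_\bn = \bnabla\!\left(\sum_{\bn \in \cN_K}\theta_\bn\right) = \b{0}$ on $K$. This gives \eqref{eq:divsigma} at once.

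For the Neumann condition \eqref{eq:sigmaNeu}, given a facet $\gamma \subset \GammaN\cap\partial K$ I would sum the normal-trace identity \eqref{eq:sigma1nKnuK} over $\bn \in \cN_K$, obtaining $\bsigma^{(1)}_{\widetilde K}\cdot\bnu|_\gamma = \sum_{\bn\in\cN_K}\Pi_\gamma(\theta_\bn g_K)$. Pulling the sum through the linear projector $\Pi_\gamma$ and invoking $\sum_{\bn\in\cN_K}\theta_\bn = 1$ on $\gamma$ reduces this to $\Pi_\gamma g_K$; since $\gamma$ is a Neumann facet, \eqref{eq:gKNeu} gives $g_K = \Pi_\gamma\gN$, and idempotency of $\Pi_\gamma$ yields $\Pi_\gamma g_K = \Pi_\gamma\gN$, which is precisely \eqref{eq:sigmaNeu}.

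The computation is essentially bookkeeping, so I expect no serious obstacle; the only points demanding care are the commutation of the summation with the projectors $\Pi_K$ and $\Pi_\gamma$ and the observation that $\Pi_K$ fixes the affine function $\Pi_K f - \kappa_K^2 u_h$. It is worth stressing that the compatibility (equilibration) condition \eqref{eq:gKequilib} plays no role in the present lemma: it was already consumed inside Lemma~\ref{le:sigmanK} to guarantee the existence of the local fields $\bsigma^{(1)}_{\bn,K}$ with the prescribed divergence and normal trace, so here one need only reassemble those building blocks through the partition of unity.
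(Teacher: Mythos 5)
Your proof is correct and follows essentially the same route as the paper: restrict $\bsigma^{(1)}_{\widetilde K}$ to $K$, sum the local identities \eqref{eq:divsigma1nK} and \eqref{eq:sigma1nKnuK} from Lemma~\ref{le:sigmanK}, and simplify via the partition of unity, linearity and idempotency of the projectors, the affineness of $\Pi_K f - \kappa_K^2 u_h$, and \eqref{eq:gKNeu}. The only cosmetic deviations are that you cancel the gradient terms through $\sum_{\bn\in\cN_K}\bnabla\theta_\bn = \b{0}$ rather than via the paper's intermediate identity \eqref{eq:divsigman}, and that on Neumann facets you sum over $\cN_K$ instead of $\cN_\gamma$, which is equivalent since the hat function of the vertex opposite $\gamma$ vanishes on $\gamma$.
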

\begin{proof}
To prove \eqref{eq:divsigma}, we use
definitions \eqref{eq:divsigma1nK} and \eqref{eq:sigma1n} to find that
\begin{equation}
  \label{eq:divsigman}
  \theta_\bn(\Pi_K f - \kappa_K^2 u_h) - \bnabla \theta_\bn \cdot \bnabla u_h + \ddiv \bsigma^{(1)}_\bn
  = \theta_\bn(\Pi_K f - \kappa_K^2 u_h) - \Pi_K \theta_\bn(\Pi_K f - \kappa_K^2 u_h)
\end{equation}
holds in $K$.
This identity together with the partition of unity $\sum_{\bn \in \cN_K} \theta_\bn = 1$, definition \eqref{eq:sigma1}, and properties of the projection $\Pi_K$ yields
\begin{multline*}
   \Pi_K f - \kappa_K^2 u_h + \ddiv \bsigma^{(1)}_{\widetilde K} 
 = \sum_{\bn \in \cN_K} \left[ \theta_\bn (\Pi_K f - \kappa_K^2 u_h) 
   - \bnabla \theta_\bn \cdot \bnabla u_h + \ddiv \bsigma^{(1)}_\bn \right]
\\   
 = \sum_{\bn \in \cN_K} \left[ \theta_\bn (\Pi_K f - \kappa_K^2 u_h) 
   - \Pi_K \theta_\bn (\Pi_K f - \kappa_K^2 u_h) \right]
\\   
 = \Pi_K f - \kappa_K^2 u_h
   - \Pi_K (\Pi_K f - \kappa_K^2 u_h)
 = 0
\end{multline*}
in $K$.

To prove \eqref{eq:sigmaNeu}, we consider
$K\in\cT_h$ to be an element adjacent to the Neumann boundary $\GammaN$ and
$\gamma \subset \GammaN \cap \partial K$ to be its facet. On this $\gamma$ we clearly have
$$
  \bsigma^{(1)}_{\widetilde K} \cdot \bnu = \sum_{\bn \in \cN_\gamma} \bsigma^{(1)}_{\bn,K} \cdot \bnu _K
  = \sum_{\bn \in \cN_\gamma} \Pi_\gamma (\theta_\bn \Pi_\gamma \gN)
  = \Pi_\gamma \gN
$$
by \eqref{eq:sigma1}, \eqref{eq:sigma1n}, \eqref{eq:sigma1nKnuK}, \eqref{eq:gKNeu},
and the fact that $\sum_{\bn \in \cN_\gamma} \theta_\bn = 1$.
Here, $\cN_\gamma$ stands for the set of $d$ vertices of the facet $\gamma$.
\end{proof}

Now, we formulate and prove the main result of this section.
For an element $K\in\cT_h$, we introduce neighbourhood $\widetilde{\widetilde K} = \bigcup \{ K' \in \cT_h : K' \cap \widetilde K \neq 0 \}$,
where $\widetilde K$ is given by \eqref{eq:patchK}. 
\begin{theorem}
\label{th:sigma}
Let $K\in\cT_h$ be an element where $\kappa_K h_K \leq 1$. 
Let $\bn \in \cN_K$ be its vertex and
let $\bsigma^{(1)}_\bn$ be defined by \eqref{eq:sigma1n}.
Then 
\begin{equation}
  \label{eq:etaKsigma_eff}
  \norm{ \bsigma^{(1)}_\bn - \theta_\bn \bnabla u_h }_{\omega_\bn}^2
\\  
   \leq C \left[ \trinorm{u-u_h}_{\widetilde{\widetilde K}}^2 
    + h_K^2 \norm{f - \Pi f}_{\widetilde{\widetilde K}}^2 
    + h_K \sum_{\gamma\in\cEnBN} \norm{\gN - \Pi_\gamma \gN}_\gamma^2 \right].
\end{equation}
\end{theorem}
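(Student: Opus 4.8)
The plan is to exploit the characterisation \eqref{eq:Ensig1}, which reduces the claim to bounding $\norm{\bsigma^{(1)}_\bn - \theta_\bn\bnabla u_h}_{\omega_\bn}^2 = \sum_{K\in\cT_\bn}\norm{\bw_K}_K^2$, where I write $\bw_K := (\bsigma^{(1)}_\bn - \theta_\bn\bnabla u_h)|_K$. First I would observe that $\bw_K\in\mathbf{RT}_1(K)$, since $\bsigma^{(1)}_\bn|_K = \bsigma^{(1)}_{\bn,K}\in\mathbf{RT}_1(K)$ and $\theta_\bn\bnabla u_h\in[\mathbb{P}_1(K)]^d\subset\mathbf{RT}_1(K)$ ($u_h$ and $\theta_\bn$ being affine on $K$). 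I would then compute the two data that, by the unisolvence established in the proof of Lemma~\ref{le:sigmanK}, determine $\bw_K$. Using \eqref{eq:sigma1nKnuK} and the fact that $\theta_\bn\bnabla u_h\cdot\bnu_K$ is already affine on each facet, the normal trace is $\bw_K\cdot\bnu_K = \Pi_\gamma(\theta_\bn R)$ on $\gamma\subset\partial K$, with $R = g_K - \bnabla u_h\cdot\bnu_K$; in particular it vanishes on every facet not containing $\bn$, since $\theta_\bn\equiv0$ there. Using \eqref{eq:divsigma1nK} together with $\ddiv(\theta_\bn\bnabla u_h) = \bnabla\theta_\bn\cdot\bnabla u_h$ (again $u_h$ affine), the divergence is $\ddiv\bw_K = -\Pi_K(\theta_\bn r_h)$, where $r_h = \Pi_K f - \kappa_K^2 u_h$ is the elementwise residual of \eqref{eq:rKest}.

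The central ingredient, and the step I expect to be most delicate, is a scaling estimate for Raviart--Thomas fields: for every $\bw\in\mathbf{RT}_1(K)$,
\[
   \norm{\bw}_K^2 \leq C\left( h_K\,\norm{\bw\cdot\bnu_K}_{\partial K}^2 + h_K^2\,\norm{\ddiv\bw}_K^2 \right).
\]
I would prove this by transforming to a fixed reference element through an affine map and the Piola transform. On the reference element the right-hand side defines a norm on the finite-dimensional space $\mathbf{RT}_1(\widehat K)$: if both the normal trace and the divergence vanish, then all the degrees of freedom of Lemma~\ref{le:sigmanK} (facet moments against $\mathbb{P}_1(\gamma)$ and divergence moments against $\mathbb{P}_1(K)/\R$) vanish, whence $\bw=0$ by unisolvence. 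Equivalence of norms on a finite-dimensional space together with the standard Piola scaling relations then produce the displayed powers of $h_K$, and the shape-regularity assumption \eqref{eq:shapereg} makes $C$ uniform over $\cG$. The main care here is bookkeeping the Jacobian factors so that the boundary term scales like $h_K$ and the divergence term like $h_K^2$.

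It then remains to insert the available residual bounds. Since $0\leq\theta_\bn\leq1$ and the $L^2$ projections are contractions, $\norm{\bw_K\cdot\bnu_K}_{\partial K}\leq\norm{R}_{\partial K}$ and $\norm{\ddiv\bw_K}_K\leq\norm{r_h}_K$, so the scaling estimate gives $\norm{\bw_K}_K^2\leq C(h_K\norm{R}_{\partial K}^2 + h_K^2\norm{r_h}_K^2)$. I would substitute \eqref{eq:Rest} and \eqref{eq:rKest}. Here the hypothesis $\kappa_K h_K\leq1$ is used decisively: it forces $\min\{h_K,\kappa_K^{-1}\} = h_K$, so that the factor $h_K\min\{h_K,\kappa_K^{-1}\}^{-1}$ multiplying $\trinorm{u-u_h}_K$ in \eqref{eq:rKest} becomes $O(1)$ and no negative power of the mesh size survives. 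Squaring and absorbing constants yields, for each $K\in\cT_\bn$,
\[
   \norm{\bw_K}_K^2 \leq C\left( \trinorm{u-u_h}_{\widetilde K}^2 + h_K^2\norm{f-\Pi f}_{\widetilde K}^2 + h_K\norm{\gN-\Pi_{\GammaN}\gN}_{\GammaN\cap\partial K}^2 \right).
\]

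Finally I would sum over $K\in\cT_\bn$. Because $\omega_\bn\subset\widetilde K$ and $\widetilde{K'}\subset\widetilde{\widetilde K}$ for each $K'\in\cT_\bn$ (any $K'$ sharing $\bn$ lies in $\widetilde K$, so its own patch lies in $\widetilde{\widetilde K}$), the finite-overlap property and the local quasi-uniformity $h_{K'}\leq Ch_K$ implied by \eqref{eq:shapereg} collapse the patchwise norms into single norms over $\widetilde{\widetilde K}$ and replace $h_{K'}$ by $h_K$. The Neumann contributions reduce to the facets $\gamma\in\cEnBN$, since the vanishing of $\theta_\bn$ on facets not containing $\bn$ removes the normal-trace data there. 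Collecting the terms gives exactly \eqref{eq:etaKsigma_eff}.
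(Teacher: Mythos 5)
Your proposal is correct and follows essentially the same route as the paper's own proof: compute elementwise that $\beps_\bn = \bsigma^{(1)}_\bn - \theta_\bn\bnabla u_h$ has divergence $-\Pi_{K'}(\theta_\bn r_h)$ and normal trace $\Pi_\gamma(\theta_\bn R)$, apply the $\mathbf{RT}_1$ scaling inequality $\norm{\bw}_{K'}^2 \leq C\left( h_{K'}^2\norm{\ddiv\bw}_{K'}^2 + h_{K'}\norm{\bw\cdot\bnu_{K'}}_{\partial K'}^2\right)$ justified by norm equivalence on the finite-dimensional space, insert \eqref{eq:rKest} and \eqref{eq:Rest}, and sum over the patch using local quasi-uniformity. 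The only point you gloss (as does the paper) is that for the neighbouring elements $K'\in\cT_\bn$ the factor $h_{K'}\min\{h_{K'},\kappa_{K'}^{-1}\}^{-1}$ is bounded not directly by the hypothesis $\kappa_K h_K\leq 1$ but via its propagation across the patch through Lemma~\ref{le:kappa}, a consequence of assumption \eqref{eq:kappacond1}.
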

\begin{proof}
Let $\beps_\bn = \bsigma^{(1)}_\bn - \theta_\bn \bnabla u_h$
and $K' \in \cT_\bn$ be an element.
By \eqref{eq:divsigma1nK}, the quantity $\beps_\bn$ satisfies
\begin{multline}
\label{eq:diveps}
    -\ddiv\beps_\bn 
  = \Pi_{K'} \theta_\bn (\Pi_{K'} f - \kappa_{K'}^2 u_h) - \bnabla \theta_\bn \cdot \bnabla u_h
    + \ddiv (\theta_\bn \bnabla u_h)
\\    
  = \Pi_{K'} \theta_\bn (\Pi_{K'} f - \kappa_{K'}^2 u_h + \Delta u_h)
  = \Pi_{K'} \theta_\bn r_h
  \quad \text{in } K',
\end{multline}
where $r_h = \Pi_{K'} f - \kappa_{K'}^2 u_h + \Delta u_h$ was introduced above \eqref{eq:rKest}.
Consequently, 
\begin{equation}
  \label{eq:divepsest}
  \norm{ \ddiv\beps_\bn }_{K'} = \norm{ \Pi_{K'} \theta_\bn r_h }_{K'} \leq \norm{ \theta_\bn r_h }_{K'} \leq \norm{r_h}_{K'}.
\end{equation}

Now, boundary conditions \eqref{eq:sigma1nKnuK} imply
\begin{equation}
\label{eq:epsnuK}
\beps_\bn \cdot \bnu_{K'} = \Pi_\gamma(\theta_\bn g_{K'}) - \theta_\bn \bnabla u_h \cdot \bnu_{K'}
    = \Pi_\gamma (\theta_\bn R)
    \quad\text{on all facets }\gamma\subset \partial K'
\end{equation}
and
\begin{equation}
\label{eq:epsnuKest}
  \norm{ \beps_\bn \cdot \bnu_{K'}}_{\partial K'}
 = \norm{\PiN (\theta_\bn R) }_{\partial K'}
 \leq \norm{ \theta_\bn R }_{\partial K'}
 \leq \norm{ R }_{\partial K' \setminus \gamma_\bn},
\end{equation}
where $\gamma_\bn$ stands for the facet of $K'$ opposite to the vertex $\bn$.
Since quantity $(\norm{\ddiv \bw}_{K'}^2 + \norm{\bw\cdot\bnu_{K'}}_{\partial K'}^2)^{1/2}$ is a norm in the finite dimensional space $\mathbf{RT}_1(K')$, we can use the scaling argument
\begin{equation*}
  \norm{ \bw }_{K'}^2 \leq C \left[ h_{K'}^2 \norm{\ddiv \bw}_{K'}^2 + h_{K'} \norm{\bw\cdot\bnu_{K'}}_{\partial K'}^2 \right]
  \quad \forall \bw \in \mathbf{RT}_1(K').
\end{equation*}
Thus, using $\bw = \beps_\bn|_{K'}$, inequalities \eqref{eq:divepsest} and \eqref{eq:epsnuKest}, we obtain
\begin{equation}
  \label{eq:epsn}
 \norm{ \beps_\bn }_{K'}^2 
 \leq C \left[ h_{K'}^2 \norm{ r_h }_{K'}^2 + h_{K'} \norm{R}_{\partial K'\setminus\gamma_\bn}^2 \right].
\end{equation}
Hence, estimates \eqref{eq:rKest} and \eqref{eq:Rest} applied in \eqref{eq:epsn} yield
$$
  \norm{ \beps_\bn }_{K'}^2 
  \leq C \left[
    \trinorm{u - u_h}_{\widetilde K'} + h_{K'}^2 \norm{f - \Pi f}_{\widetilde K'} 
      + h_{K'} \norm{\gN - \PiN \gN}_{\GammaN \cap \partial K'}
  \right].
$$
Finally, the bound \eqref{eq:etaKsigma_eff} follows by using the local quasi-uniformity of the mesh.
\end{proof}

\section{The Second Auxiliary Flux Reconstruction}
\label{se:sig2}

For elements $K\in\cT_h$, where $h_K \kappa_K > 1$, we define the second auxiliary flux reconstruction as
\begin{equation}
\label{eq:sigma2}
  \bsigma^{(2)}_{\widetilde K} = \sum_{\bn \in \cN_K} \bsigma_\bn^{(2)},
\end{equation}
where
\begin{equation}
\label{eq:sigma1n2}
  \bsigma_\bn^{(2)} = \kappa_K^{-2} \theta_\bn \bnabla f_\bn
\end{equation}
and $f_\bn$ is the $L^2(\omega_\bn)$-orthogonal projection of $f$ onto the space $\mathbb{P}_1(\omega_\bn)$ of affine functions on $\omega_\bn$. Note that $\bsigma_\bn^{(2)}$ is supported in $\omega_\bn$ and that it is continuous.
To simplify the notation we introduce piecewise linear and discontinuous function $f_\bn^\kappa$ in the patch $\omega_\bn$ by the rule
$$
  f_\bn^\kappa|_{K'} = \frac{\kappa_{K'}^2}{\kappa_K^2} f_\bn|_{K'}
  \quad \forall K' \in \cT_\bn.
$$
For completeness, we define $f_\bn^\kappa = \Pi f$ in those patches $\omega_\bn$, where $h_K\kappa_K \leq 1$ for all $K\in\cT_\bn$.


It is clear that $\bsigma^{(2)}_\bn|_K \in \mathbf{RT}_1(K)$ for all $K \in \cT_\bn$ and that $\bsigma^{(2)}_\bn \cdot \bnu_K = 0$ on facets $\gamma \in \cEnBE$. Thus $\bsigma^{(2)}_\bn$ can be extended by zero such that
$\bsigma^{(2)}_\bn \in \Hdiv$ 
and consequently $\bsigma^{(2)}_{\widetilde K} \in \Hdiv$.

\begin{lemma}
Let $u_h \in V_h$ be arbitrary.
Let $K\in\cT_h$ be an element such that $h_K \kappa_K > 1$.
Let $\bn\in\cN_K$ be its vertex and 
let $\bsigma^{(2)}_\bn$ be defined by \eqref{eq:sigma1n2}.
Then
\begin{multline}
  \label{eq:bsigman2est}
  \norm{ \bsigma^{(2)}_\bn - \theta_\bn \bnabla u_h }_{K'}^2 
  + \kappa_{K'}^{-2} \norm{ \theta_\bn (\Pi_{K'} f - \kappa_{K'}^2 u_h) 
   - \bnabla \theta_\bn \cdot \bnabla u_h + \ddiv \bsigma^{(2)}_\bn }_{K'}^2
\\   
  \leq C \left( \trinorm{u - u_h}_{K'}^2 + \kappa_{K'}^{-2} \norm{f - \Pi_{K'} f}_{K'}^2 + \kappa_{K'}^{-2} \norm{
  f_\bn^\kappa 
  - \Pi_{K'} f}_{K'}^2 \right)
\end{multline}
holds for all $K' \in \cT_\bn$.
\end{lemma}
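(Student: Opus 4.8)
The plan is to reduce the left-hand side to two scalar quantities and then bound each using the residual estimate~\eqref{eq:rKest}. I would begin by recording the elementary structure of $\bsigma^{(2)}_\bn$. Since $f_\bn$ is affine on $\omega_\bn$, the vector $\b{g} = \kappa_K^{-2}\bnabla f_\bn$ is constant, so by~\eqref{eq:sigma1n2} we have $\bsigma^{(2)}_\bn = \theta_\bn\b{g}$ and hence $\ddiv\bsigma^{(2)}_\bn = \bnabla\theta_\bn\cdot\b{g}$. Because $u_h|_{K'}$ is affine we have $\Delta u_h = 0$, so the divergence residual $\Pi_{K'} f - \kappa_{K'}^2 u_h$ coincides exactly with the quantity $r_h$ appearing in~\eqref{eq:rKest}. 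Substituting these identities, the integrand in the second term on the left becomes $\theta_\bn r_h + \bnabla\theta_\bn\cdot(\b{g}-\bnabla u_h)$. Using $0\le\theta_\bn\le1$, the bound $\norm{\bnabla\theta_\bn}_{L^\infty(K')}\le C h_{K'}^{-1}$, and the triangle inequality, the whole left-hand side is then controlled by $C\,[\mathrm{(A)}+\mathrm{(B)}]$, where $\mathrm{(A)}=\norm{\b{g}-\bnabla u_h}_{K'}^2$ and $\mathrm{(B)}=\kappa_{K'}^{-2}\norm{r_h}_{K'}^2$. The cross term $\kappa_{K'}^{-2}\norm{\bnabla\theta_\bn\cdot(\b{g}-\bnabla u_h)}_{K'}^2$ is absorbed into $\mathrm{(A)}$ because $(h_{K'}\kappa_{K'})^{-1}\le C$ in the reaction-dominated regime $h_K\kappa_K>1$; this, together with the fact that $\kappa_{K'}>0$ throughout the patch so that the weighted quantities are well defined, follows from Lemma~\ref{le:kappa} combined with the local quasi-uniformity that lets me pass from $K$ to any $K'\in\cT_\bn$.

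Term $\mathrm{(B)}$ is then immediate from~\eqref{eq:rKest}. Since $h_K\kappa_K>1$ forces $\min\{h_{K'},\kappa_{K'}^{-1}\}=\kappa_{K'}^{-1}$, that estimate reads $\norm{r_h}_{K'}\le C[\kappa_{K'}\trinorm{u-u_h}_{K'}+\norm{f-\Pi_{K'} f}_{K'}]$, whence $\mathrm{(B)}\le C[\trinorm{u-u_h}_{K'}^2+\kappa_{K'}^{-2}\norm{f-\Pi_{K'} f}_{K'}^2]$, i.e. two of the three target terms. I note that~\eqref{eq:rKest} is a standard element-residual bound obtained by testing the local weak form against a bubble multiple of $r_h$; it does not invoke Galerkin orthogonality and therefore applies to the arbitrary $u_h\in V_h$ of this lemma.

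The crux is term $\mathrm{(A)}$, and the main obstacle is to bound $\norm{\bnabla(\kappa_K^{-2}f_\bn-u_h)}_{K'}$ without introducing derivatives of the exact solution that the right-hand side cannot control. The resolution is to compare $u_h$ not with $u$ but with the computable quantity $\kappa_{K'}^{-2}\Pi_{K'} f$. Since both $\kappa_K^{-2}f_\bn$ and $u_h$ are affine on $K'$, the inverse inequality $\norm{\bnabla p}_{K'}\le C h_{K'}^{-1}\norm{p}_{K'}$ converts the gradient into an $L^2$ norm, which I then estimate by writing $u_h=\kappa_{K'}^{-2}(\Pi_{K'} f-r_h)$ and using the identity $\kappa_K^{-2}f_\bn-\kappa_{K'}^{-2}\Pi_{K'} f=\kappa_{K'}^{-2}(f_\bn^\kappa-\Pi_{K'} f)$ — which is precisely where the weighted oscillation $f_\bn^\kappa$ enters. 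This gives $\norm{\kappa_K^{-2}f_\bn-u_h}_{K'}\le\kappa_{K'}^{-2}(\norm{f_\bn^\kappa-\Pi_{K'} f}_{K'}+\norm{r_h}_{K'})$, and combining it with the inverse inequality together with the robust bound $h_{K'}^{-2}\kappa_{K'}^{-4}\le C\kappa_{K'}^{-2}$ yields $\mathrm{(A)}\le C\kappa_{K'}^{-2}[\norm{f_\bn^\kappa-\Pi_{K'} f}_{K'}^2+\norm{r_h}_{K'}^2]$.

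Finally I would feed the bound on $\mathrm{(B)}$ back into this last estimate, so that $\kappa_{K'}^{-2}\norm{r_h}_{K'}^2$ is replaced by $C[\trinorm{u-u_h}_{K'}^2+\kappa_{K'}^{-2}\norm{f-\Pi_{K'} f}_{K'}^2]$. Adding the contributions of $\mathrm{(A)}$ and $\mathrm{(B)}$ then produces exactly the three terms on the right-hand side of~\eqref{eq:bsigman2est}, completing the argument. The only delicate point throughout is tracking the powers of $h_{K'}$ and $\kappa_{K'}$ so that every constant remains independent of both the mesh size and $\kappa$; each such factor is neutralised by the single regime inequality $(h_{K'}\kappa_{K'})^{-1}\le C$, which is what makes the estimate robust in the singularly perturbed limit.
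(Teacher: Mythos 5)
Your proposal is correct and takes essentially the same route as the paper's proof: an inverse inequality reduces both the flux-difference and divergence-residual terms to $L^2$ quantities with an $h_{K'}^{-1}$ scaling, the regime bound $(h_{K'}\kappa_{K'})^{-1}\le C$ from Lemma~\ref{le:kappa} absorbs the mesh factors, a triangle inequality through $\kappa_{K'}^{-2}\Pi_{K'}f$ introduces the $f_\bn^\kappa$ oscillation, and \eqref{eq:rKest} bounds the residual $r_h=\Pi_{K'}f-\kappa_{K'}^2 u_h$ (using $\Delta u_h|_{K'}=0$), exactly as in the paper. One small imprecision worth noting: $h_K\kappa_K>1$ yields $\min\{h_{K'},\kappa_{K'}^{-1}\}^{-1}\le C\kappa_{K'}$ on neighbouring elements $K'$, not the exact equality $\min\{h_{K'},\kappa_{K'}^{-1}\}=\kappa_{K'}^{-1}$ you assert, but since all estimates hold up to generic constants this changes nothing.
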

\begin{proof}
Let $K' \in \cT_\bn$ be fixed.
Recall that Lemma~\ref{le:kappa} implies $\kappa_{K'} > 0$, $\kappa_K \leq C \kappa_{K'}$ and $\kappa_{K'}^{-1} h_{K'}^{-1} \leq C$.  
The inverse inequality yields
\begin{multline}
\label{eq:estsig2}
  \norm{ \bsigma^{(2)}_\bn - \theta_\bn \bnabla u_h }_{K'}
  = \norm{\theta_\bn \bnabla( \kappa_K^{-2} f_\bn - u_h) }_{K'}
  \leq \norm{\bnabla( \kappa_K^{-2} f_\bn - u_h) }_{K'}
\\  
  \leq C h_{K'}^{-1} \kappa_{K'}^{-2} \norm{\frac{\kappa_{K'}^2}{\kappa_K^2} f_\bn - \kappa_{K'}^2 u_h }_{K'}
  \leq C \kappa_{K'}^{-1} \norm{f_\bn^\kappa - \kappa_{K'}^2 u_h }_{K'}.
\end{multline}
Similarly, 
\begin{align*}
  &\norm{ \theta_\bn (\Pi_{K'} f - \kappa_{K'}^2 u_h) 
     - \bnabla \theta_\bn \cdot \bnabla u_h + \ddiv \bsigma^{(2)}_\bn }_{K'}
\\      
  &\quad\leq  \norm{ \theta_\bn (\Pi_{K'} f - \kappa_{K'}^2 u_h) }_{K'}
      + \norm{ \ddiv (\bsigma^{(2)}_\bn - \theta_\bn \bnabla u_h) }_{K'}
\\      
  &\quad\leq  \norm{ \Pi_{K'} f - \kappa_{K'}^2 u_h }_{K'}
      + C h_{K'}^{-1} \norm{ \bsigma^{(2)}_\bn - \theta_\bn \bnabla u_h }_{K'}
\\      
  &\quad\leq  \norm{ \Pi_{K'} f - \kappa_{K'}^2 u_h }_{K'}
      + C \kappa_{K'}^{-1} h_{K'}^{-1} \norm{ f_\bn^\kappa - \kappa_{K'}^2 u_h }_{K'},
\end{align*}
where estimate \eqref{eq:estsig2} and the fact that $\Delta u_h|_{K'} = 0$ were used.
Consequently, using bound $\kappa_{K'}^{-1} h_{K'}^{-1} \leq C$ and triangle inequality, we obtain
\begin{align*}
&\norm{ \bsigma^{(2)}_\bn - \theta_\bn \bnabla u_h }_{K'}^2 
  + \kappa_{K'}^{-2} \norm{ \theta_\bn (\Pi_{K'} f - \kappa_{K'}^2 u_h) 
   - \bnabla \theta_\bn \cdot \bnabla u_h + \ddiv \bsigma^{(2)}_\bn }_{K'}^2
\\   
&\quad\leq C \kappa_{K'}^{-2} \left(
    \norm{ f_\bn^\kappa - \kappa_{K'}^2 u_h }_{K'}^2 + \norm{ \Pi_{K'} f - \kappa_{K'}^2 u_h }_{K'}^2
  \right)
\\
&\quad\leq C \kappa_{K'}^{-2} \left(
    \norm{ f_\bn^\kappa - \Pi_{K'} f }_{K'}^2 + \norm{ \Pi_{K'} f - \kappa_{K'}^2 u_h }_{K'}^2
  \right)  
\\  
&\quad\leq C \left(  
    \trinorm{u - u_h}_{K'}^2 
  + \kappa_{K'}^{-2} \norm{f - \Pi_{K'} f}_{K'}^2
  + \kappa_{K'}^{-2} \norm{ f_\bn^\kappa - \Pi_{K'} f}_{K'}^2
  \right),  
\end{align*}
where we employ \eqref{eq:rKest}. 
\end{proof}

\begin{lemma}
Let $u_h \in V_h$ be arbitrary.
Let $K\in\cT_h$ be an element such that $\kappa_K h_K > 1$.
Let $\bn\in\cN_K$ be its vertex and 
let $\gamma \in \cEnBN$ be a facet on $\GammaN$ adjacent to an element $K_\gamma$.
Then 
\begin{multline}
  \label{eq:PigNsig2est}
  \CT^{K,\gamma} \norm{\Pi_\gamma(\theta_\bn \Pi_\gamma \gN) - \bsigma^{(2)}_\bn \cdot\bnu }_\gamma
  \leq 
\\  
  C \left( \trinorm{u-u_h}_{K_\gamma} 
    + \kappa_{K_\gamma}^{-1} \norm{f - \Pi_{K_\gamma} f}_{K_\gamma}
    + \kappa_{K_\gamma}^{-1} \norm{f - f_\bn^\kappa}_{K_\gamma}  
    + \kappa_{K_\gamma}^{-1/2}\norm{\gN - \Pi_\gamma\gN}_\gamma  
  \right).
\end{multline}
\end{lemma}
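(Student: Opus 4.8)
The plan is to use the explicit form $\bsigma^{(2)}_\bn=\kappa_K^{-2}\theta_\bn\bnabla f_\bn$ together with the observation that, because $\kappa_Kh_K>1$, one has $\kappa_K^{-1}<h_K$; inserting this into the definition of $(\CT^{K,\gamma})^2$ and using shape regularity ($|\gamma|\le C|K|/h_K$) yields $\CT^{K,\gamma}\le C\kappa_K^{-1/2}$, and Lemma~\ref{le:kappa} then lets $\kappa_K$ be replaced by $\kappa_{K_\gamma}$ everywhere. Since the normal trace of $\mathbf{RT}_1$ lies in $\mathbb{P}_1(\gamma)$, both $\Pi_\gamma(\theta_\bn\Pi_\gamma\gN)$ and $\bsigma^{(2)}_\bn\cdot\bnu=\kappa_K^{-2}\theta_\bn\bnabla f_\bn\cdot\bnu$ are affine on $\gamma$, so I would write the facet residual as $\Pi_\gamma[\theta_\bn(\Pi_\gamma\gN-\kappa_K^{-2}\bnabla f_\bn\cdot\bnu)]$ and, using $0\le\theta_\bn\le1$ and the $L^2(\gamma)$-stability of $\Pi_\gamma$, bound its norm by $\norm{\Pi_\gamma\gN-\kappa_K^{-2}\bnabla f_\bn\cdot\bnu}_\gamma$. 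Inserting $\bnabla u_h\cdot\bnu$ splits this affine quantity into the computable Neumann residual $\rho_\gamma:=\Pi_\gamma\gN-\bnabla u_h\cdot\bnu$ and the polynomial term $\bnabla(u_h-\kappa_K^{-2}f_\bn)\cdot\bnu$.

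The polynomial term is elementary: as $u_h$ and $f_\bn$ are affine on $K_\gamma$ its normal component is constant, so its facet norm is at most $Ch_{K_\gamma}^{-1/2}\norm{\bnabla(u_h-\kappa_K^{-2}f_\bn)}_{K_\gamma}$. The inverse-inequality estimate already carried out in \eqref{eq:estsig2}, combined with \eqref{eq:rKest}, bounds the right-hand side by $C(\trinorm{u-u_h}_{K_\gamma}+\kappa_{K_\gamma}^{-1}\norm{f-\Pi_{K_\gamma}f}_{K_\gamma}+\kappa_{K_\gamma}^{-1}\norm{f-f_\bn^\kappa}_{K_\gamma})$. Multiplying by $\CT^{K,\gamma}\le C\kappa_{K_\gamma}^{-1/2}$ and using $\kappa_{K_\gamma}h_{K_\gamma}>1$, i.e. $(\kappa_{K_\gamma}h_{K_\gamma})^{-1/2}\le1$, the stray factor $\kappa_{K_\gamma}^{-1/2}h_{K_\gamma}^{-1/2}$ is absorbed and this contribution reproduces the first three terms of the claimed bound.

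The crux is $\CT^{K,\gamma}\norm{\rho_\gamma}_\gamma$. The difficulty is that $\gN=\partial u/\partial\bnu$ is a normal-derivative trace that a naive trace inequality cannot control by $\trinorm{u-u_h}_{K_\gamma}$ without paying for a second derivative. I would instead use a Verf\"urth-type bubble argument localized to $K_\gamma$. Since $\rho_\gamma\in\mathbb{P}_1(\gamma)$, I take a cut-off facet bubble $v_\gamma$ with $v_\gamma|_\gamma=b_\gamma\rho_\gamma$ for the facet bubble $b_\gamma$, so that $\norm{\rho_\gamma}_\gamma^2\le C\int_\gamma\rho_\gamma v_\gamma$; crucially $b_\gamma$ vanishes at the two endpoints of $\gamma$, so the extension of $v_\gamma$ by zero onto $\partial K_\gamma\setminus\gamma$ lies in $V$ and the whole argument stays on $K_\gamma$ (this is exactly what avoids the vertex $\bn$, where $\theta_\bn\ne0$ would otherwise obstruct an $H^1$-lifting). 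Testing the weak form \eqref{eq:weakf} with this extension and integrating $\int_\gamma\bnabla u_h\cdot\bnu\,v_\gamma$ back into $K_\gamma$ (recall $\Delta u_h=0$ on $K_\gamma$) converts the facet integral into
\[
\int_\gamma\rho_\gamma v_\gamma=\int_{K_\gamma}\bigl(\bnabla(u-u_h)\cdot\bnabla v_\gamma+\kappa_{K_\gamma}^2(u-u_h)v_\gamma\bigr)-\int_{K_\gamma}(f-\kappa_{K_\gamma}^2u_h)v_\gamma+\int_\gamma(\Pi_\gamma\gN-\gN)v_\gamma.
\]

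The hardest and most delicate point, and the reason robustness survives, is the \emph{width} of $v_\gamma$: a bubble of full element width would make $\kappa_{K_\gamma}^2\norm{v_\gamma}_{K_\gamma}^2$ scale like $\kappa_{K_\gamma}^2h_{K_\gamma}$ and cost a spurious factor $(\kappa_{K_\gamma}h_{K_\gamma})^{1/2}>1$. I therefore concentrate $v_\gamma$ in a slab of width $\min\{h_{K_\gamma},\kappa_{K_\gamma}^{-1}\}=\kappa_{K_\gamma}^{-1}$ adjacent to $\gamma$, giving $\norm{v_\gamma}_\gamma\le C\norm{\rho_\gamma}_\gamma$, $\norm{v_\gamma}_{K_\gamma}\le C\kappa_{K_\gamma}^{-1/2}\norm{\rho_\gamma}_\gamma$ and $\trinorm{v_\gamma}_{K_\gamma}\le C\kappa_{K_\gamma}^{1/2}\norm{\rho_\gamma}_\gamma$. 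Applying Cauchy--Schwarz in the energy inner product to the first term, in $L^2(K_\gamma)$ to the second, on $\gamma$ to the third, and dividing by $\norm{\rho_\gamma}_\gamma$, yields
\[
\norm{\rho_\gamma}_\gamma\le C\bigl(\kappa_{K_\gamma}^{1/2}\trinorm{u-u_h}_{K_\gamma}+\kappa_{K_\gamma}^{-1/2}\norm{f-\kappa_{K_\gamma}^2u_h}_{K_\gamma}+\norm{\gN-\Pi_\gamma\gN}_\gamma\bigr).
\]
Multiplying by $\CT^{K,\gamma}\le C\kappa_{K_\gamma}^{-1/2}$ turns the middle term into $\kappa_{K_\gamma}^{-1}\norm{f-\kappa_{K_\gamma}^2u_h}_{K_\gamma}$, which \eqref{eq:rKest} bounds by $C(\trinorm{u-u_h}_{K_\gamma}+\kappa_{K_\gamma}^{-1}\norm{f-\Pi_{K_\gamma}f}_{K_\gamma})$; collecting terms gives exactly the $\trinorm{u-u_h}_{K_\gamma}$, $\kappa_{K_\gamma}^{-1}\norm{f-\Pi_{K_\gamma}f}_{K_\gamma}$ and $\kappa_{K_\gamma}^{-1/2}\norm{\gN-\Pi_\gamma\gN}_\gamma$ contributions, which with the polynomial part completes \eqref{eq:PigNsig2est}.
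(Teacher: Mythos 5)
Your proof is correct, and while it rests on the same underlying mechanism as the paper's --- reduce to the affine facet residual $\Pi_\gamma\gN-\kappa_K^{-2}\bnabla f_\bn\cdot\bnu$ via stability of $\Pi_\gamma$ and $0\le\theta_\bn\le1$, test the weak form \eqref{eq:weakf} with a facet-bubble function supported in $K_\gamma$, exploit an extension obeying the robust bound $\trinorm{\cdot}_{K_\gamma}\le C\min\{h_{K_\gamma},\kappa_{K_\gamma}^{-1}\}^{-1/2}\norm{\cdot}_\gamma$, multiply by $\CT^{K,\gamma}\le C\kappa_{K_\gamma}^{-1/2}$, and finish with \eqref{eq:rKest} --- it reorganizes the argument in two genuine ways. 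First, you insert $\bnabla u_h\cdot\bnu$ and split off the constant normal flux $\bnabla(u_h-\kappa_K^{-2}f_\bn)\cdot\bnu$, dispatching it by a pure polynomial scaling estimate combined with \eqref{eq:estsig2}, so that only the classical Neumann residual $\rho_\gamma=\Pi_\gamma\gN-\bnabla u_h\cdot\bnu$ enters the PDE argument; the paper instead keeps the full residual in the bubble test and absorbs the $f_\bn$ contribution through the identity $\int_\gamma\bnabla f_\bn\cdot\bnu\,v\ds=\int_{K_\gamma}\bnabla f_\bn\cdot\bnabla\cE v\dx$ inside the weak-form identity \eqref{eq:eqgN}. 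Second, you construct the lifting explicitly as a squeezed bubble supported in a slab of width $\min\{h_{K_\gamma},\kappa_{K_\gamma}^{-1}\}$ (Verf\"urth style), whereas the paper uses the minimum-energy extension $\cE v$ and cites \cite[Lemma~4]{AinBab:1999} for the same $\min\{h_{K_\gamma},\kappa_{K_\gamma}^{-1}\}^{-1/2}$ scaling; your route makes the robustness mechanism self-contained (and your observation that a full-width bubble would cost a spurious factor $(\kappa_{K_\gamma}h_{K_\gamma})^{1/2}$ identifies exactly where robustness would be lost), at the cost of having to verify the slab-bubble norm bounds, while the paper's route is shorter by outsourcing them. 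Two pedantic remarks: Lemma~\ref{le:kappa} yields only $\kappa_{K_\gamma}^{-1}h_{K_\gamma}^{-1}\le C$ on the neighbouring element, not literally $\min\{h_{K_\gamma},\kappa_{K_\gamma}^{-1}\}=\kappa_{K_\gamma}^{-1}$, so the slab width should be stated as $\min\{h_{K_\gamma},\kappa_{K_\gamma}^{-1}\}$ with the two quantities merely comparable (this only affects constants); and when dividing by $\norm{\rho_\gamma}_\gamma$ the trivial case $\rho_\gamma=0$ should be noted. Neither affects correctness, and your final collection of terms reproduces \eqref{eq:PigNsig2est} exactly.
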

\begin{proof}
Using the definition \eqref{eq:sigma1n2} of $\bsigma^{(2)}_\bn$, properties of projection $\Pi_\gamma$ and hat function $\theta_\bn$, we obtain
\begin{multline}
  \label{eq:estPigN}
       \norm{\Pi_\gamma(\theta_\bn \Pi_\gamma \gN) - \bsigma^{(2)}_\bn \cdot\bnu }_\gamma
  =    \norm{\Pi_\gamma\left(\theta_\bn \Pi_\gamma \gN - \kappa_K^{-2} \theta_\bn \bnabla f_\bn \cdot\bnu\right) }_\gamma
\\  
  \leq \norm{\theta_\bn \left( \Pi_\gamma \gN - \kappa_K^{-2} \bnabla f_\bn \cdot\bnu \right) }_\gamma
  \leq \norm{\Pi_\gamma \gN - \kappa_K^{-2} \bnabla f_\bn \cdot\bnu }_\gamma.
\end{multline}
To bound this norm, we consider a special test function. We define function $v$ on the boundary $\partial {K_\gamma}$ as $v = 0$ on $\partial {K_\gamma} \setminus \gamma$ and $v = \beta \cdot (\Pi_\gamma\gN - \kappa_K^{-2} \bnabla f_\bn \cdot \bnu)$ on $\gamma$, where $\beta = \prod_{\bn\in\cN_\gamma} \theta_\bn$ is a bubble function defined on the facet $\gamma$. Then we introduce minimum energy extension $\cE v$ to the interior of ${K_\gamma}$ satisfying $\cE v \in H^1({K_\gamma})$, $\cE v = v$ on $\partial {K_\gamma}$, and $\cB_{K_\gamma}(\cE v, w) = 0$ for all $w \in H^1_0({K_\gamma})$, see \cite[Section~3.1]{AinBab:1999}. Extending $\cE v$ further by zero to the rest of the domain $\Omega$, we have $\cE v \in V$.

Using $\cE v$ as a test function in \eqref{eq:weakf} together with identity $\int_\gamma \bnabla f_\bn \cdot \bnu v \ds = \int_{K_\gamma} \bnabla f_\bn \cdot \bnabla \cE v \dx$, we derive
\begin{multline}
\label{eq:eqgN}
\int_\gamma ( \Pi_\gamma \gN - \kappa_K^{-2} \bnabla f_\bn \cdot \bnu ) v \ds
\\  
  =   \int_{K_\gamma} \left( \bnabla u \cdot \bnabla \cE v
    + \kappa_{K_\gamma}^2 u \cE v
    - f \cE v \right) \dx
    - \int_\gamma \kappa_K^{-2} \bnabla f_\bn \cdot \bnu v \ds
    + \int_\gamma (\Pi_\gamma\gN - \gN) v \ds
\\    
  = \int_{K_\gamma} (\bnabla u - \bnabla u_h)\cdot \bnabla \cE v \dx
  + \kappa_{K_\gamma}^2 \int_{K_\gamma} (u-u_h) \cE v \dx
\\
  +  \int_{K_\gamma} (  \bnabla u_h - \kappa_K^{-2} \bnabla f_\bn) \cdot \bnabla \cE v \dx
  + \int_{K_\gamma} (\kappa_{K_\gamma}^2 u_h - f) \cE v \dx
  + \int_\gamma (\Pi_\gamma\gN - \gN) v \ds.
\end{multline}
Since $\Pi_\gamma\gN - \kappa_K^{-2} \bnabla f_\bn \cdot \bnu \in \mathbb{P}_1(\gamma)$ and $\left(\int_\gamma \beta \varphi^2 \dx \right)^{1/2}$ is a norm in $\mathbb{P}_1(\gamma)$, we use the equivalence of norms in the finite dimensional space $\mathbb{P}_1(\gamma)$ to get
$$
\norm{\Pi_\gamma \gN - \kappa_K^{-2} \bnabla f_\bn \cdot \bnu}_\gamma^2 
 \leq C \int_\gamma ( \Pi_\gamma \gN - \kappa_K^{-2} \bnabla f_\bn \cdot \bnu ) v \ds.
$$
This estimate together with identity \eqref{eq:eqgN}, Cauchy--Schwarz inequality, inverse inequality, and bound $h_{K_\gamma}^{-1}\kappa_{K_\gamma}^{-1} \leq C$ provided by assumption \eqref{eq:kappacond1} yields
\begin{multline}
  \label{eq:estgN}
  \norm{\Pi_\gamma \gN - \kappa_K^{-2} \bnabla f_\bn \cdot \bnu}_\gamma^2
  \leq C \left( \trinorm{u-u_h}_{K_\gamma} \trinorm{\cE v}_{K_\gamma}
    + h_{K_\gamma}^{-1} \norm{ u_h - \kappa_K^{-2}f_\bn}_{K_\gamma} \norm{\bnabla \cE v}_{K_\gamma}
    \right.
\\    
   \left.
    + \norm{\kappa_{K_\gamma}^2 u_h - f}_{K_\gamma} \norm{\cE v}_{K_\gamma}
    + \norm{\Pi_\gamma\gN - \gN}_\gamma \norm{v}_\gamma
   \right) 
\\    
  \leq C \left( \trinorm{u-u_h}_{K_\gamma} 
    + \kappa_{K_\gamma}^{-1} \norm{\kappa_{K_\gamma}^2 u_h - f}_{K_\gamma} 
             + \kappa_{K_\gamma}^{-1} \norm{f - \frac{\kappa_{K_\gamma}^2}{\kappa_K^2} f_\bn}_{K_\gamma} \right) \trinorm{\cE v}_{K_\gamma}
\\             
    + C \norm{\gN - \Pi_\gamma\gN}_\gamma \norm{v}_\gamma.
\end{multline}
%
Here, the energy norm $\trinorm{\cE v}_{K_\gamma}$ is bounded by \cite[Lemma~4]{AinBab:1999} as
$$
  \trinorm{\cE v}_{K_\gamma} \leq C \min\{h_{K_\gamma},\kappa_{K_\gamma}^{-1}\}^{-1/2} \norm{\Pi_\gamma\gN - \kappa_K^{-2} \bnabla f_\bn \cdot \bnu}_\gamma 
$$
and norm $\norm{v}_\gamma$ by using the equivalence of norms in $\mathbb{P}_1(\gamma)$ as
$$
  \norm{v}_\gamma \leq C \norm{\Pi_\gamma \gN - \kappa_K^{-2} \bnabla f_\bn \cdot \bnu}_\gamma.
$$
Finally, using these bounds, inequalities $h_{K_\gamma}^{-1}\kappa_{K_\gamma}^{-1} \leq C$ and $\CT^{K,\gamma} \leq C \kappa_{K_\gamma}^{-1/2}$, and estimate \eqref{eq:rKest} in \eqref{eq:estgN}, we obtain
\begin{multline*}
  \CT^{K,\gamma} \norm{\Pi_\gamma \gN - \kappa_K^{-2} \bnabla f_\bn \cdot \bnu}_\gamma
\\  
  \leq C \left[ \trinorm{u-u_h}_{K_\gamma} 
    + \kappa_{K_\gamma}^{-1} \norm{f - \Pi_{K_\gamma} f}_{K_\gamma}
\right.
\\  
\left.    
    + \kappa_{K_\gamma}^{-1} \norm{f - f_\bn^\kappa}_{K_\gamma}  
    + \kappa_{K_\gamma}^{-1/2}\norm{\gN - \Pi_\gamma\gN}_\gamma  
  \right].
\end{multline*}
This estimate and \eqref{eq:estPigN} finish the proof.
\end{proof}

\section{Efficiency of Patchwise Minimizations}
\label{se:efficiency}


We first formulate a lemma stating that error indicators can be bounded by the value of the quadratic functional $E_\bn$.
\begin{lemma}
Let $u_h \in V$ be arbitrary. 
Let $d$ stand for the dimension.
Let error indicators $\eta_K$ be defined by \eqref{eq:etaK}.
Let reconstructed flux $\btau\in\Hdiv$ given by \eqref{eq:tau} 
satisfy equilibration conditions \eqref{eq:equilib1}--\eqref{eq:equilib2}
and 
let its local components $\btau_\bn$ be in $\bW(\omega_\bn)$.
Then
\begin{equation}
  \label{eq:estetaK}
  \eta_K^2(\btau) \leq (d+2)(d+1) \sum_{\bn\in\cN_K} E_\bn(\btau_\bn)
  \quad\text{for all } K \in \cT_h.
\end{equation}
\end{lemma}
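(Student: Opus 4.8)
The plan is to bound $\eta_K^2(\btau)$ by separating the contributions in the definition~\eqref{eq:etaK}, localising each of them through the partition of unity $\sum_{\bn\in\cN_K}\theta_\bn=1$ on $K$, and matching the resulting pieces with the three terms of $E_\bn$. Writing $\eta_K(\btau)=a_K+\sum_{\gamma\subset\GammaN\cap\partial K}\CT^{K,\gamma}\norm{\RN}_\gamma$ with $a_K=(\norm{\beps}_K^2+\kappa_K^{-2}\norm{r}_K^2)^{1/2}$, I note this is a sum of $1+m$ nonnegative numbers, where $m$ is the number of Neumann facets of $K$; since a simplex in $\R^d$ has $d+1$ facets, $m\leq d+1$, and the inequality $(\sum_{i=1}^{1+m}x_i)^2\leq(1+m)\sum_i x_i^2$ gives
\[
  \eta_K^2(\btau)\leq (d+2)\Big(\norm{\beps}_K^2+\kappa_K^{-2}\norm{r}_K^2+\sum_{\gamma\subset\GammaN\cap\partial K}(\CT^{K,\gamma})^2\norm{\RN}_\gamma^2\Big),
\]
which produces the factor $d+2$.

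The heart of the argument is to recognise each of these quantities on $K$ (or on a facet $\gamma$) as a sum over vertices of exactly the local contributions appearing in $E_\bn$. Using $\btau|_K=\sum_{\bn\in\cN_K}\btau_\bn|_K$ (valid because each $\btau_\bn$ is supported in $\omega_\bn$) together with $\bnabla u_h=\sum_{\bn\in\cN_K}\theta_\bn\bnabla u_h$ on $K$, I obtain $\beps|_K=\sum_{\bn\in\cN_K}(\btau_\bn-\theta_\bn\bnabla u_h)$. For the residual I set $r_\bn:=\Pi(\theta_\bn(\Pi f-\kappa^2 u_h))-\bnabla\theta_\bn\cdot\bnabla u_h+\ddiv\btau_\bn$ and combine $\sum_{\bn\in\cN_K}\Pi_K(\theta_\bn(\Pi_K f-\kappa_K^2 u_h))=\Pi_K(\Pi_K f-\kappa_K^2 u_h)=\Pi_K f-\kappa_K^2 u_h$ (the last step because the argument is already affine), with $\sum_{\bn\in\cN_K}\bnabla\theta_\bn=\bnabla 1=0$ and $\sum_{\bn\in\cN_K}\ddiv\btau_\bn=\ddiv\btau$, to conclude $r|_K=\sum_{\bn\in\cN_K}r_\bn$. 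Finally, on a Neumann facet $\gamma\subset\GammaN\cap\partial K$ only the vertices $\bn\in\cN_\gamma$ contribute (for $\bn\notin\gamma$ one has $\theta_\bn=0$ on $\gamma$, and $\btau_\bn\cdot\bnu=0$ there since $\gamma\in\cEnBE$), and $\sum_{\bn\in\cN_\gamma}\Pi_\gamma(\theta_\bn\Pi_\gamma\gN)=\Pi_\gamma\gN$, so $\RN|_\gamma=\sum_{\bn\in\cN_\gamma}[\Pi_\gamma(\theta_\bn\Pi_\gamma\gN)-\btau_\bn\cdot\bnu]$.

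With these three identities, I apply Cauchy--Schwarz once more: the two volume sums have $d+1$ terms and each facet sum has $d\leq d+1$ terms, so $\norm{\beps}_K^2\leq(d+1)\sum_{\bn\in\cN_K}\norm{\btau_\bn-\theta_\bn\bnabla u_h}_K^2$, $\kappa_K^{-2}\norm{r}_K^2\leq(d+1)\sum_{\bn\in\cN_K}\kappa_K^{-2}\norm{r_\bn}_K^2$, and analogously for the facet terms. Each summand is then bounded by restricting its integral to $K\subset\omega_\bn$, to $K\subset\omega_\bn^+$ (legitimate when $\kappa_K>0$, since then $K\in\cT_\bn\cap\cT_h^+$, and $\kappa=\kappa_K$ on $K$), or to $\gamma\subset\GammaNnp$ with the matching constant $\CTN|_\gamma=\CT^{K,\gamma}$ because $K=K_\gamma$; each is therefore no larger than one of the three terms of $E_\bn(\btau_\bn)$. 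Summing over $\bn\in\cN_K$ gives $\norm{\beps}_K^2+\kappa_K^{-2}\norm{r}_K^2+\sum_\gamma(\CT^{K,\gamma})^2\norm{\RN}_\gamma^2\leq(d+1)\sum_{\bn\in\cN_K}E_\bn(\btau_\bn)$, and combined with the first step this is precisely~\eqref{eq:estetaK}. For $K\in\cT_h^0$ only the $\beps$ term is present (by the convention following~\eqref{eq:etaK}, equivalently because equilibration forces $r=\RN=0$), so the same bound holds a fortiori.

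I expect the main obstacle to be the careful bookkeeping of the partition-of-unity identities of the second paragraph --- verifying that the projection and gradient combinations telescope exactly, so that $r|_K$ and $\RN|_\gamma$ equal the sums of the very quantities weighted by $\theta_\bn$ inside $E_\bn$, with no stray terms --- together with checking that restricting each local contribution from $\omega_\bn$, $\omega_\bn^+$, or $\GammaNnp$ down to $K$ or $\gamma$ is valid under the relevant support and positivity conditions.
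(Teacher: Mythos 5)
Your proof is correct and follows essentially the same route as the paper's: the same Cauchy--Schwarz split into $1+m\leq d+2$ terms, the same partition-of-unity decompositions of $\beps$, $r$, and $\RN$ into the vertex contributions appearing in $E_\bn$, the same $(d+1)$-term (resp.\ $d$-term) Cauchy--Schwarz localisation, and the same trivial handling of $K\in\cT_h^0$. If anything, your bookkeeping is slightly more careful than the paper's, since you explicitly carry the $\bnabla\theta_\bn\cdot\bnabla u_h$ term (which telescopes via $\sum_{\bn\in\cN_K}\bnabla\theta_\bn=0$) that the paper's displayed estimate for $\norm{\kappa^{-1}r}_K$ silently drops.
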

\begin{proof}
Let simplex $K\in\cT_h^+$ be fixed.
Since there is $d+1$ facets on the boundary of $K$, we can bound $\eta_K(\btau)$ by Cauchy--Schwarz  inequality as
$$
  \eta_K^2(\btau) \leq (d+2) \left( 
      \norm{ \beps }_K^2 
    + \norm{ \kappa^{-1} r }_K^2
    + \sum_{\gamma \subset \GammaN\cap\partial K} \norm{ \CT^{K,\gamma} \RN}_\gamma^2
  \right).
$$

Using the partition of unity $\theta_\bn$,
definition of $\beps$, definition \eqref{eq:tau} of $\btau$, and Cauchy--Schwarz inequality, we obtain
\begin{equation}
  \label{eq:bepsKest}
  \norm{ \beps }_K^2 = \norm{ \sum_{\bn\in\cN_K} \left( \btau_\bn - \theta_\bn \bnabla u_h \right) }_K^2
  \leq (d+1) \sum_{\bn\in\cN_K} \norm{ \btau_\bn - \theta_\bn \bnabla u_h  }_{\omega_\bn}^2.
\end{equation}
Similarly, we estimate
\begin{multline*}
  \norm{ \kappa^{-1} r }_K^2
  = \norm{ \kappa^{-1} \sum_{\bn\in\cN_K} \left[ \Pi_K( \theta_\bn(\Pi_K f - \kappa_K^2 u_h)) + \ddiv \btau_\bn \right] }_K^2
\\  
  \leq (d+1) \sum_{\bn\in\cN_K} \norm{ \kappa^{-1} \left[ \Pi( \theta_\bn(\Pi f - \kappa^2 u_h)) + \ddiv \btau_\bn \right] }_{\omega_\bn^+}^2
\end{multline*}
and for facets $\gamma \subset \GammaN \cap \partial K$
\begin{multline*}
    \norm{\CT^{K,\gamma} \RN}_\gamma^2
  = \norm{\CT^{K,\gamma} \sum_{\bn\in\cN_\gamma} \left( \Pi_\gamma (\theta_\bn \Pi_\gamma \gN) - \btau_\bn \cdot \bnu \right) }_\gamma^2
\\  
  \leq d \sum_{\bn\in\cN_\gamma} \norm{ \CT^{K,\gamma} \left[ \Pi_\gamma (\theta_\bn \Pi_\gamma \gN) - \btau_\bn \cdot \bnu \right] }_\gamma^2.
\end{multline*}
Statement \eqref{eq:estetaK} now follows by a combination of these estimates.

If $K\in\cT_h^0$ then bound \eqref{eq:estetaK} is easy to verify, because of
identity $\eta_K(\btau) = \norm{ \beps }_K$ and estimate \eqref{eq:bepsKest}.
\end{proof}

Note that this lemma holds true for any flux $\btau\in\Hdiv$ given by \eqref{eq:tau}. Its local components $\btau_\bn \in \bW(\omega_\bn)$ are not required to minimize the quadratic functional $E_\bn$ defined in \eqref{eq:taunmin}.

The following theorem presents the main result of this paper. It states the efficiency and robustness of error indicators computed by \eqref{eq:etaK} from the patchwise flux reconstruction $\btau \in \Hdiv$ defined in \eqref{eq:tau}.
To formulate it, we introduce the union $\tGammaNK$ of facets in the triangulation $\cT_h$ that lie on $\GammaN$ and have at least one common point with $K$, i.e., $\tGammaNK = \bigcup \{ \gamma \subset \GammaN : \gamma \cap K \neq \emptyset\}$.
\begin{theorem}
\label{th:loceff}
Let $u\in V$ be the weak solution \eqref{eq:weakf} and 
let $u_h \in V_h$ be its Galerkin approximation satisfying \eqref{eq:FEM}.
Let flux reconstruction $\btau \in \Hdiv$ be given by \eqref{eq:tau} and 
let its local components $\btau_\bn \in \bW(\omega_\bn)$ solve local problems \eqref{eq:taun}--\eqref{eq:taunconstr2}.
Then there exists a constant $C > 0$ independent of reaction coefficient $\kappa$ and any mesh size 
such that the local efficiency estimate 
\begin{multline}
  \label{eq:loceff}
  \eta_K^2(\btau) \leq C \left[ 
       \trinorm{u - u_h}_{\widetilde{\widetilde K}}^2 
       + \min\{h_K,\kappa_K^{-1}\}^2 \left( \norm{f - \Pi f}_{\widetilde{\widetilde K}}^2
       + \sum_{\bn\in\cN_K} \norm{ f_\bn^\kappa - \Pi f}_{\omega_\bn}^2 \right)
\right. 
\\       
\left.
       + \min\{h_K,\kappa_K^{-1}\} \norm{\gN - \PiN \gN}_{\tGammaNK}^2 
  \right].
\end{multline}
holds true for all elements $K\in\cT_h$.  

\end{theorem}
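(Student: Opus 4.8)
The plan is to combine the reduction inequality \eqref{eq:estetaK} with the minimality of $\btau_\bn$ and the two auxiliary reconstructions analysed in the previous sections. By \eqref{eq:estetaK} it suffices to bound $\sum_{\bn\in\cN_K} E_\bn(\btau_\bn)$, and since each $\btau_\bn$ solves \eqref{eq:taun}--\eqref{eq:taunconstr2}, i.e. minimizes $E_\bn$ over the constrained admissible set, I have $E_\bn(\btau_\bn)\leq E_\bn(\bsigma_\bn)$ for \emph{any} competitor $\bsigma_\bn\in\bW(\omega_\bn)$ obeying \eqref{eq:tauconstr1}--\eqref{eq:tauconstr2}. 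The argument then splits according to the dichotomy used to define the auxiliary fluxes: $\kappa_Kh_K\leq 1$ (take $\bsigma_\bn=\bsigma^{(1)}_\bn$) versus $\kappa_Kh_K>1$ (take $\bsigma_\bn=\bsigma^{(2)}_\bn$), noting that $\min\{h_K,\kappa_K^{-1}\}$ equals $h_K$ in the first case and $\kappa_K^{-1}$ in the second, consistently with the convention \eqref{eq:kappa0}.

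First I would treat $\kappa_Kh_K\leq 1$. For each $\bn\in\cN_K$ the field $\bsigma^{(1)}_\bn$ lies in $\bW(\omega_\bn)$ by Lemma~\ref{le:sigma1inW} and satisfies \eqref{eq:tauconstr1}--\eqref{eq:tauconstr2} by \eqref{eq:divsigma1nK} and the Neumann identity of Lemma~\ref{le:sigma1inW}, so it is an admissible competitor, and by \eqref{eq:Ensig1} one has $E_\bn(\bsigma^{(1)}_\bn)=\norm{\bsigma^{(1)}_\bn-\theta_\bn\bnabla u_h}_{\omega_\bn}^2$. Theorem~\ref{th:sigma} bounds this by $\trinorm{u-u_h}_{\widetilde{\widetilde K}}^2+h_K^2\norm{f-\Pi f}_{\widetilde{\widetilde K}}^2+h_K\sum_{\gamma\in\cEnBN}\norm{\gN-\Pi_\gamma\gN}_\gamma^2$. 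Summing over the $d+1$ vertices of $K$, feeding the result into \eqref{eq:estetaK}, and using that $\bigcup_{\bn\in\cN_K}\cEnBN$ covers $\tGammaNK$ together with $h_K=\min\{h_K,\kappa_K^{-1}\}$ yields \eqref{eq:loceff} in this case; the nonnegative $f_\bn^\kappa$ term on the right is simply discarded.

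The case $\kappa_Kh_K>1$ is the more delicate one. Here Lemma~\ref{le:kappa} gives $\kappa_{K'}>0$ throughout the patch, whence $\omega_\bn^0=\emptyset$ and $\GammaNnz=\emptyset$, so that constraints \eqref{eq:tauconstr1}--\eqref{eq:tauconstr2} are vacuous and $\bsigma^{(2)}_\bn$ is automatically admissible. I then estimate $E_\bn(\bsigma^{(2)}_\bn)$ term by term: the volumetric contributions are handled by \eqref{eq:bsigman2est} summed over $K'\in\cT_\bn$, using that $\bnabla\theta_\bn\cdot\bnabla u_h$ and $\ddiv\bsigma^{(2)}_\bn$ are both constant on each $K'$, so that the integrand in $E_\bn$ equals $\Pi_{K'}$ applied to the integrand in \eqref{eq:bsigman2est} and the projection only decreases the norm; the Neumann contribution is controlled by \eqref{eq:PigNsig2est} after replacing the weight $\CT^{K_\gamma,\gamma}$ by $C\,\CT^{K,\gamma}$, which is legitimate because $\kappa_{K_\gamma}\simeq\kappa_K$ by Lemma~\ref{le:kappa}. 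Summing over $\bn\in\cN_K$, converting every $\kappa_{K'}^{-1}$ and $\kappa_{K_\gamma}^{-1}$ into $\min\{h_K,\kappa_K^{-1}\}$ via the equivalences of Lemma~\ref{le:kappa}, and bounding $\norm{f-f_\bn^\kappa}\leq\norm{f-\Pi f}+\norm{\Pi f-f_\bn^\kappa}$, produces precisely the right-hand side of \eqref{eq:loceff}.

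The genuinely non-routine points are not the final assembly, which is bookkeeping, but the two facts that make it work: the vanishing of the minimization constraints on the patch when $\kappa_Kh_K>1$, so that $\bsigma^{(2)}_\bn$ competes without violating equilibration, and the uniform $\kappa$- and $h$-equivalences of Lemma~\ref{le:kappa} that let all local weights $\kappa_{K'}^{-1}$ be absorbed into the single scale $\min\{h_K,\kappa_K^{-1}\}$. These, rather than any single estimate, are what deliver robustness as $\kappa\to\infty$; the heavy analytic work has already been carried out in Theorem~\ref{th:sigma} and in estimates \eqref{eq:bsigman2est}--\eqref{eq:PigNsig2est}.
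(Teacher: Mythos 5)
Your proposal is correct and follows essentially the same route as the paper: reduce via \eqref{eq:estetaK} and the minimality of $\btau_\bn$, compare with $\bsigma^{(1)}_\bn$ when $\kappa_K h_K \leq 1$ (using \eqref{eq:Ensig1} and Theorem~\ref{th:sigma}) and with $\bsigma^{(2)}_\bn$ when $\kappa_K h_K > 1$ (using \eqref{eq:bsigman2est}--\eqref{eq:PigNsig2est}), then absorb all local weights via Lemma~\ref{le:kappa}. In fact you make explicit several points the paper leaves implicit — the vacuity of the constraints \eqref{eq:tauconstr1}--\eqref{eq:tauconstr2} on the patch when $\kappa_K h_K > 1$, the norm-decreasing effect of $\Pi_{K'}$ on the piecewise-constant terms, and the equivalence $\CT^{K_\gamma,\gamma}\simeq\CT^{K,\gamma}$ — so no gaps remain.
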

\begin{proof}
We consider two cases. 
First, let $K \in \cT_h$ be such that $\kappa_K h_K \leq 1$. 
Since 
$\btau_\bn \in \bW(\omega_\bn)$ minimizes the functional $E_\bn$,
both $\btau_\bn$ and $\bsigma^{(1)}_\bn$ satisfy constraints \eqref{eq:tauconstr1}--\eqref{eq:tauconstr2},
and fluxes $\bsigma^{(1)}_\bn$ satisfy \eqref{eq:Ensig1},
we obtain from \eqref{eq:estetaK} the estimate
$$
  \eta_K^2(\btau) \leq (d+2)(d+1) \sum_{\bn\in\cN_K} E_\bn(\bsigma^{(1)}_\bn)
  = C \sum_{\bn\in\cN_K} \norm{ \bsigma^{(1)}_\bn - \theta_\bn \bnabla u_h }_{\omega_\bn}^2.
$$
Applying Theorem~\ref{th:sigma} in this inequality immediately yields
\begin{equation}
  \label{eq:etaeff1}
  \eta_K^2(\btau) \leq 
  C \left[ \trinorm{u-u_h}_{\widetilde{\widetilde K}}^2 
    + h_K^2 \norm{f - \Pi f}_{\widetilde{\widetilde K}}^2 
    + h_K \sum_{\bn\in\cN_K} \sum_{\gamma \in \cEnBN} \norm{\gN - \Pi_\gamma \gN}_\gamma^2 \right].
\end{equation}

Second, let $K \in \cT_h$ be such that $\kappa_K h_K > 1$. 
By assumption \eqref{eq:kappacond1}, the reaction coefficient satisfies $\kappa_{K'} > 0$ for all elements $K' \subset \widetilde K$. 
Since $\bsigma^{(2)}_\bn \in \bW(\omega_\bn)$
and $\btau_\bn$ is the minimizer of $E_\bn$, the inequality \eqref{eq:estetaK}
yields
\begin{multline*}
  \eta_K^2(\btau) \leq (d+2)(d+1) E_\bn(\bsigma^{(2)}_\bn)
    \leq C \sum_{\bn\in\cN_K} \left[ 
        \norm{ \bsigma^{(2)}_\bn - \theta_\bn \bnabla u_h }_{\omega_\bn}^2
\right.
\\
        + \sum_{K'\in\cT_\bn} \kappa_{K'}^{-2} \norm{\Pi_{K'}(\theta_\bn (\Pi_{K'} f - \kappa_{K'}^2 u_h))
          - \bnabla \theta_\bn \cdot \bnabla u_h + \ddiv\bsigma^{(2)}_\bn}_{K'}^2
\\
\left.
  + \sum_{\gamma \in \cEnBN} \left(\CT^{K_\gamma,\gamma}\right)^2 \norm{\Pi_\gamma(\theta_\bn \Pi_\gamma \gN) - \bsigma^{(2)}_\bn \cdot\bnu }_\gamma^2
       \right].
\end{multline*}
Estimates \eqref{eq:bsigman2est} and \eqref{eq:PigNsig2est} then give 
\begin{multline}
  \label{eq:etaeff2}
  \eta_K^2(\btau) 
\leq C \sum_{\bn\in\cN_K} \left[ 
    \sum_{K'\in\cT_\bn} 
      \left( \trinorm{u - u_h}_{K'}^2 + \kappa_{K'}^{-2} \norm{f - \Pi_{K'} f}_{K'}^2 
\right.\right.
\\
\left.\left.
      + \kappa_{K'}^{-2} \norm{ f_\bn^\kappa - \Pi_{K'} f}_{K'}^2 \right)
      +
     \sum_{\gamma \in \cEnBN} \kappa_{K_\gamma}^{-1} \norm{\gN - \Pi_\gamma\gN}_\gamma^2
     \right].
\end{multline}
Assumption \eqref{eq:kappacond1} and a combination of \eqref{eq:etaeff1} and \eqref{eq:etaeff2} finishes the proof.
%
%
\end{proof}

The oscillation term $\min\{h_K,\kappa_K^{-1}\} \norm{ f_\bn^\kappa - \Pi f}_{\omega_\bn}$ is not standard, however, as well as the other oscillation terms in \eqref{eq:loceff} it is of higher order than the error $\trinorm{u - u_h}_{\widetilde{\widetilde K}}$ and does not spoil the robust local efficiency result.

\section{Avoiding Equilibration}
\label{se:noconstr}

Theorem~\ref{th:main} requires the flux $\btau$ to satisfy equilibration conditions
\eqref{eq:equilib1}--\eqref{eq:equilib2}. 
Therefore, the local minimization problems \eqref{eq:taunmin} is constrained by \eqref{eq:tauconstr1}--\eqref{eq:tauconstr2}.
However, in practical computations constraints \eqref{eq:tauconstr1}--\eqref{eq:tauconstr2} 
are often not satisfied exactly due to round-off errors. Consequently, assumptions of Theorem~\ref{th:main}
are not valid and error estimator $\eta(\btau)$ is not guaranteed to provide 
the upper bound on the error.
This problem can be avoided by introducing Friedrichs--Poincar\'e and trace inequalities and two additional parameters in the definition of the estimator.

Given arbitrary flux $\btau\in\Hdiv$, we define modified local error indicators
\begin{equation}
\label{eq:etaKB}
  \teta_K(\btau) = \left\{ 
    \begin{array}{ll}
      \left( \norm{\beps}_K^2
      + \kappa_K^{-2} \norm{r}_K^2 \right)^{1/2} 
      + \sum_{\gamma \subset \GammaN\cap\partial K} \CT^{K,\gamma} \norm{\RN}_\gamma
      & \text{if } K \in \cT_h^+,    
    \\ 
      \left( \norm{\beps}_K^2 + \kappa_0^{-2} \norm{r}_K^2  
      + \zeta_0^{-2} \sum_{\gamma \subset \GammaN\cap\partial K} \norm{\RN}_\gamma^2
      \right)^{1/2}
      & \text{if } K \in \cT_h^0,
     \end{array}
   \right.
\end{equation}
where small parameter $\kappa_0 > 0$ replaces the zero value of $\kappa$ in a sense
and small parameter $\zeta_0 > 0$ has a similar meaning.
Notice that $\teta_K(\btau)$ differs from $\eta_K(\btau)$ only for elements $K\in\cT_h^0$.

The modified error estimator is then defined as 
\begin{equation}
\label{eq:etaB}
  \teta^2(\btau) = 
     \left(1+\kappa_0^2\CF^2 + \zeta_0^2 \CT^2\right) 
     \sum_{K\in\cT_h} [\teta_K(\btau) + \osc_K(f,\gN)]^2,
\end{equation}
where $\CF > 0$ and $\CT > 0$ are constants from
Friedrichs--Poincar\'e and trace inequalities
\begin{equation}
  \label{eq:Pointrace}
  \norm{v} \leq \CF \trinorm{v}
  \quad\text{and}\quad
  \norm{v}_\GammaN \leq \CT \trinorm{v}
  \quad\forall v \in V.
\end{equation}
The following theorem presents a modification of Theorem~\ref{th:main} that avoids the equilibration conditions \eqref{eq:equilib1}--\eqref{eq:equilib2}.
\begin{theorem}
\label{th:mainB}
Let $u_h \in V$ and $\btau \in\Hdiv$ be arbitrary. Then
\begin{equation}
\label{eq:upperboundB}
\trinorm{u - u_h} \leq 
  \teta(\btau)
\end{equation}
for all $\kappa_0 > 0$ and $\zeta_0 > 0$.
\end{theorem}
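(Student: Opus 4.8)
The plan is to start from the error identity \eqref{eq:errid}, which holds for \emph{any} $v \in V$ and \emph{any} $u_h \in V$ regardless of whether $\btau$ satisfies the equilibration conditions. The proof of Theorem~\ref{th:main} exploited equilibration precisely to annihilate the residual terms $r$ and $\RN$ on elements $K \in \cT_h^0$, so that only $\norm{\beps}_K$ survived there. Without equilibration, these terms no longer vanish, and the key idea is to control them using the Friedrichs--Poincar\'e and trace inequalities \eqref{eq:Pointrace} rather than discarding them. First I would take $v = u - u_h$ in \eqref{eq:errid} so that the left-hand side equals $\trinorm{u-u_h}^2$.

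Next I would estimate each group of terms on the right. The oscillation terms are bounded by $\osc_K(f,\gN)\trinorm{v}_K$ exactly as in \eqref{eq:oscKest}, and on elements $K \in \cT_h^+$ the terms $(\beps,\bnabla v)_K + (r,v)_K$ and $(\RN,v)_\gamma$ are bounded by \eqref{eq:epsest} and \eqref{eq:RNest}, reproducing the factor $\teta_K(\btau)\trinorm{v}_K$ for the $\cT_h^+$ case of \eqref{eq:etaKB}. The new work is on $K \in \cT_h^0$, where $\kappa_K = 0$ so the local energy norm $\trinorm{v}_K$ contains no reaction contribution and cannot absorb $\norm{r}_K$ or $\norm{\RN}_\gamma$ locally. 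Instead I would keep $(\beps,\bnabla v)_K \le \norm{\beps}_K \trinorm{v}_K$, but bound $(r,v)_K \le \norm{r}_K \norm{v}_K$ and $(\RN,v)_\gamma \le \norm{\RN}_\gamma \norm{v}_\gamma$, deliberately \emph{not} localizing the $L^2$ factors $\norm{v}_K$ and $\norm{v}_\gamma$.

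Summing over all elements and using Cauchy--Schwarz, the contributions from $\cT_h^0$ produce terms of the form $\bigl(\sum_{K\in\cT_h^0}\norm{r}_K^2\bigr)^{1/2}\norm{v}$ and $\bigl(\sum_{\gamma}\norm{\RN}_\gamma^2\bigr)^{1/2}\norm{v}_\GammaN$, where the global $L^2$ norms are recombined across $\cT_h^0$. Applying \eqref{eq:Pointrace} replaces $\norm{v}$ by $\CF\trinorm{v}$ and $\norm{v}_\GammaN$ by $\CT\trinorm{v}$. To match the structure of $\teta_K$ in \eqref{eq:etaKB}, I would insert the parameters $\kappa_0$ and $\zeta_0$ as weights: write $\norm{r}_K = \kappa_0 \cdot \kappa_0^{-1}\norm{r}_K$ and $\norm{\RN}_\gamma = \zeta_0 \cdot \zeta_0^{-1}\norm{\RN}_\gamma$, so that the $\kappa_0^{-1}\norm{r}_K$ and $\zeta_0^{-1}\norm{\RN}_\gamma$ pieces enter the definition of $\teta_K$ on $\cT_h^0$, while the leftover factors $\kappa_0\CF$ and $\zeta_0\CT$ get collected into the prefactor $(1 + \kappa_0^2\CF^2 + \zeta_0^2\CT^2)$ appearing in \eqref{eq:etaB}. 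Organizing the three global sums (the equilibrated-style part with weight $1$, the reaction-residual part with weight $\kappa_0\CF$, and the Neumann-residual part with weight $\zeta_0\CT$) into a single Cauchy--Schwarz over the three-component vector is what yields that exact prefactor.

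The main obstacle I anticipate is the bookkeeping needed to assemble the correct prefactor $(1+\kappa_0^2\CF^2+\zeta_0^2\CT^2)$: one must split the right-hand side into three families, bound each by a product of a ``discrete'' factor (the relevant piece of $\sum_K[\teta_K+\osc_K]^2$) and an ``analytic'' factor ($\trinorm{v}$, $\CF\trinorm{v}$, or $\CT\trinorm{v}$), and then apply a vector Cauchy--Schwarz inequality in $\R^3$ so that the three weights $1$, $\kappa_0^2\CF^2$, $\zeta_0^2\CT^2$ combine additively under the square root. A subtle point is that on $\cT_h^0$ the single indicator $\teta_K$ packages $\norm{\beps}_K^2 + \kappa_0^{-2}\norm{r}_K^2 + \zeta_0^{-2}\sum_\gamma\norm{\RN}_\gamma^2$ together, so I must be careful that the cross-grouping across different weight classes still telescopes into $\sum_K[\teta_K+\osc_K]^2$ after a final Cauchy--Schwarz in $K$; verifying that the weighted combination does not overcount and that dividing through by $\trinorm{u-u_h}$ leaves exactly $\teta(\btau)$ is the delicate step. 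Everything else is a direct adaptation of the proof of Theorem~\ref{th:main}.
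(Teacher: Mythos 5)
Your proposal is correct and follows essentially the same route as the paper: the error identity \eqref{eq:errid}, the bounds \eqref{eq:oscKest}--\eqref{eq:RNest} on $\cT_h^+$, non-localized $L^2$ bounds for $r$ and $\RN$ on $\cT_h^0$, insertion of the weights $\kappa_0,\zeta_0$, and the Friedrichs--Poincar\'e and trace inequalities \eqref{eq:Pointrace}, with $v=u-u_h$ at the end. The only organizational difference is that the paper performs the weighted Cauchy--Schwarz \emph{locally} on each $K\in\cT_h^0$, producing the augmented test norm $\left(\trinorm{v}_K^2+\kappa_0^2\norm{v}_K^2+\zeta_0^2\norm{v}_{\GammaN\cap\partial K}^2\right)^{1/2}$, and only then applies a single global Cauchy--Schwarz over elements followed by \eqref{eq:Pointrace}, which makes the recombination step you flag as delicate disappear entirely (in your ordering it does go through, since $\norm{\beps}_K\le\teta_K(\btau)$ on $\cT_h^0$ guarantees your three-family sum is dominated by $\sum_{K}[\teta_K(\btau)+\osc_K(f,\gN)]^2$).
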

\begin{proof}
Using identity \eqref{eq:errid} and estimates \eqref{eq:oscKest}--\eqref{eq:RNest},
we arrive at
\begin{multline*}
  \cB(u - u_h,v)
  \leq \sum\limits_{K\in\cT_h^+} \left[
    \left( \norm{\beps}_K^2 + \kappa_K^{-2}\norm{r}_K^2 \right)^{1/2}
    + \sum_{\gamma\subset\GammaN\cap\partial K} \CT^{K,\gamma} \norm{\RN}_\gamma
  \right] \trinorm{v}_K
\\
  + \sum\limits_{K\in\cT_h^0} \left[
   \norm{\beps}_K \trinorm{v}_K
   + \norm{r}_K \norm{v}_K
   + \sum\limits_{\gamma\subset\GammaN\cap\partial K} \norm{\RN}_\gamma \norm{v}_\gamma
   \right]
\\
  + \sum\limits_{K\in\cT_h} \osc_K(f,\gN) \trinorm{v}_K.
\end{multline*}
Using Cauchy--Schwarz inequality 
\begin{multline*}
   \norm{\beps}_K \trinorm{v}_K
   + \norm{r}_K \norm{v}_K
   + \sum\limits_{\gamma\subset\GammaN\cap\partial K} \norm{\RN}_\gamma \norm{v}_\gamma
\\   
\leq
\left( 
   \norm{\beps}_K^2 
   + \kappa_0^{-2} \norm{r}_K^2 
   + \zeta_0^{-2} \sum\limits_{\gamma\subset\GammaN\cap\partial K} \norm{\RN}_\gamma^2 
   \right)^{1/2}
\left( 
   \trinorm{v}_K^2 + \kappa_0^2 \norm{v}_K^2 
   + \zeta_0^2 \norm{v}_{\GammaN\cap\partial K}^2  
   \right)^{1/2},
\end{multline*}
we obtain
\begin{multline*}
  \cB(u - u_h,v) 
  \leq 
    \sum_{K\in\cT_h} \left[ \teta_K(\btau) + \osc_K(f,\gN) \right]
      \left( 
        \trinorm{v}_K^2 + \kappa_0^2 \norm{v}_K^2 
        + \zeta_0^2 \norm{v}_{\GammaN\cap\partial K}^2  
      \right)^{1/2}
\\
  \leq
  \left(\sum_{K\in\cT_h} \left[ \teta_K(\btau) + \osc_K(f,\gN) \right]^2\right)^{1/2}
  \left( \trinorm{v}^2 + \kappa_0^2 \norm{v}^2 
        + \zeta_0^2 \norm{v}_\GammaN^2  \right)^{1/2}.
\end{multline*}
Friedrichs--Poincar\'e and trace inequalities \eqref{eq:Pointrace},
notation \eqref{eq:etaB}, and choice $v=u-u_h$ finish the proof.
\end{proof}

Constants $\kappa_0$ and $\zeta_0$ should be small. Ideally so small that $1 +
\kappa_0^2 \CF^2 + \zeta_0^2 \CT^2 \approx 1$.  In this case the influence of
Friedrichs--Poincar\'e and trace constants $\CF$ and $\CT$ on the value of the
error bound \eqref{eq:upperboundB} is negligible.  In the case of pure Dirichlet
boundary conditions, i.e., $\GammaN = \emptyset$, the parameter $\zeta_0$ is
not needed and estimate \eqref{eq:upperboundB} holds with $\CT=0$.  In case
$\kappa > 0$ everywhere in $\Omega$ the set $\cT_h^0$ is empty and parameters
$\kappa_0$, $\zeta_0$ and constants $\CF$, $\CT$ are not needed.
Estimate \eqref{eq:upperboundB} then holds with $\CF=\CT=0$
and local error indicators \eqref{eq:etaK} and \eqref{eq:etaKB} coincide.
However, if $\kappa$ vanishes at some parts of $\Omega$ and
constants $\CF$ and $\CT$ are needed, then they can be computed analytically in
some special cases and numerically, in general. Even their guaranteed numerical
bounds are available, see e.g. \cite{SebVej:2014,Vejchodsky:2018}.

Since Theorem~\ref{th:mainB} does not require any equilibration condition, 
the patchwise flux reconstruction procedure simplifies. 
Modified fluxes $\tbtau_\bn \in \bW(\omega_\bn)$ minimize the quadratic functional
\begin{multline}
  \label{eq:taunminB}
  \widetilde{E}_\bn(\tbtau_\bn) = \norm{ \tbtau_\bn - \theta_\bn \bnabla u_h }_{\omega_\bn}^2 \\   
  + \norm{ \tilde\kappa^{-1} \left[ \Pi(\theta_\bn (\Pi f - \kappa^2 u_h))
  - \bnabla \theta_\bn \cdot \bnabla u_h + \ddiv \tbtau_\bn \right] }_{\omega_\bn}^2 \\
  + \norm{ \tilde\zeta^{-1} \left[ \PiN(\theta_\bn \PiN \gN) - \tbtau_\bn \cdot\bnu \right] }_\GammaNn^2, 
\end{multline}
over the space $\bW(\omega_\bn)$, where $\GammaNn$ is
the union of the facets belonging to $\cEnBN$ and
piecewise constant parameters $\tilde\kappa$ and $\tilde\zeta$ are given by
\begin{equation}
  \label{eq:beta}
  \tilde\kappa|_K = \left\{
  \begin{array}{ll}
    \kappa_K & \text{if }\kappa_K > 0, \\
    \kappa_0 & \text{if }\kappa_K = 0,
  \end{array}
  \right.
  \quad\text{and}\quad
  \tilde\zeta|_\gamma = \left\{
  \begin{array}{ll}
    \left( \CT^{K_\gamma,\gamma} \right)^{-1}  & \text{if }\kappa_{K_\gamma} > 0, \\
    \zeta_0 & \text{if }\kappa_{K_\gamma} = 0,
  \end{array}
  \right.
\end{equation}
for all elements $K\in\cT_h$ and all facets $\gamma \subset \GammaN$,
where we recall that $K_\gamma$ denotes the element adjacent to the facet $\gamma$.

The minimizer $\tbtau_\bn \in \bW(\omega_\bn)$ of \eqref{eq:taunminB} could,
equally well, be characterised as the unique solution of the following problem:
\begin{multline}
  \label{eq:taunB}
  (\tilde\kappa^{-2} \ddiv \tbtau_\bn, \ddiv \bw_h)_{\omega_\bn}
  + (\tbtau_\bn, \bw_h)_{\omega_\bn} 
  + (\tilde\zeta^{-2} \tbtau_\bn\cdot\bnu,\bw_h\cdot\bnu)_\GammaNn
\\
  = (\theta_\bn \bnabla u_h, \bw_h)_{\omega_\bn} 
  - \left( \tilde\kappa^{-2}\left[ \theta_\bn (\Pi f - \kappa^2 u_h) - \bnabla \theta_\bn \cdot \bnabla u_h \right], \ddiv \bw_h \right)_{\omega_\bn}
\\  
  + (\tilde\zeta^{-2} \theta_\bn\PiN\gN,\bw_h\cdot\bnu)_\GammaNn
\end{multline}
for all $\bw_h \in \bW(\omega_\bn)$.
Note that the large values $\kappa_0^{-1}$ and $\zeta_0^{-1}$ play here the role of 
penalty parameters to impose constraints \eqref{eq:tauconstr1}--\eqref{eq:tauconstr2} 
and \eqref{eq:taunconstr1}--\eqref{eq:taunconstr2} in a weak sense.
Consequently, the difference between $\btau$ and $\tbtau$ is small in practical computations.

Summing up fluxes $\tbtau_\bn$ as in Section~\ref{se:locmin} results 
in a modified reconstructed flux
\begin{equation}
\label{eq:tauB}
\tbtau = \sum_{\bn \in \cN_h} \tbtau_\bn
\end{equation}
that can be directly used in Theorem~\ref{th:mainB} to obtain a guaranteed 
upper bound on the error.

The modified reconstructed flux is locally efficient and robust as 
it is stated in the following corollary.
\begin{corollary}
Let $u\in V$ be the weak solution \eqref{eq:weakf} and 
let $u_h \in V_h$ be its Galerkin approximation satisfying \eqref{eq:FEM}.
Let flux reconstruction $\tbtau \in \Hdiv$ be given by \eqref{eq:tauB} and 
let its local components $\tbtau_\bn \in \bW(\omega_\bn)$ solve local problems \eqref{eq:taunB}.
Then there exists a constant $C > 0$ independent of reaction coefficient $\kappa$ and any mesh size 
such that the local efficiency estimate 
\begin{multline*}
  \teta_K^2(\tbtau) \leq C \left[ 
       \trinorm{u - u_h}_{\widetilde{\widetilde K}}^2 
       + \min\{h_K,\kappa_K^{-1}\}^2 \left( \norm{f - \Pi f}_{\widetilde{\widetilde K}}^2
       + \sum_{\bn\in\cN_K} \norm{ f_\bn^\kappa - \Pi f}_{\omega_\bn}^2 \right)
\right. 
\\       
\left.
       + \min\{h_K,\kappa_K^{-1}\} \norm{\gN - \PiN \gN}_{\tGammaNK}^2 
  \right].
\end{multline*}
holds true for all elements $K\in\cT_h$.  
\end{corollary}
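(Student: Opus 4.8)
The plan is to follow the proof of Theorem~\ref{th:loceff} almost verbatim, the only genuinely new ingredient being a version of the bound \eqref{eq:estetaK} adapted to the modified indicator $\teta_K$ of \eqref{eq:etaKB} and the modified functional $\widetilde{E}_\bn$ of \eqref{eq:taunminB}. First I would establish that
\[
  \teta_K^2(\tbtau) \leq C \sum_{\bn\in\cN_K} \widetilde{E}_\bn(\tbtau_\bn)
  \quad\text{for all } K\in\cT_h.
\]
For $K\in\cT_h^+$ this is the same Cauchy--Schwarz and partition-of-unity argument as before, since $\tilde\kappa=\kappa$ and $\tilde\zeta^{-1}=\CTN$ there. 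The new case is $K\in\cT_h^0$: here $\teta_K$ retains the terms $\kappa_0^{-1}\norm{r}_K$ and $\zeta_0^{-1}\norm{\RN}_\gamma$, which are \emph{not} killed by equilibration. The key observation is that, by the partition-of-unity identities $\sum_{\bn\in\cN_K}\theta_\bn=1$ and $\sum_{\bn\in\cN_K}\bnabla\theta_\bn=0$, one has $r|_K=\sum_{\bn\in\cN_K}[\Pi(\theta_\bn(\Pi f-\kappa^2 u_h))-\bnabla\theta_\bn\cdot\bnabla u_h+\ddiv\tbtau_\bn]|_K$ on $K\in\cT_h^0$, together with an analogous identity for $\RN$ on the adjacent Neumann facets. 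Since $\tilde\kappa=\kappa_0$ and $\tilde\zeta=\zeta_0$ on $\cT_h^0$ elements and their facets, and since $\widetilde{E}_\bn$ integrates these residuals over the \emph{whole} patch $\omega_\bn$ and all of $\GammaNn$, Cauchy--Schwarz bounds $\kappa_0^{-2}\norm{r}_K^2$ and $\zeta_0^{-2}\norm{\RN}_\gamma^2$ by the corresponding penalized terms of $\widetilde{E}_\bn$.

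With this lemma in hand, I would exploit that $\tbtau_\bn$ now minimizes $\widetilde{E}_\bn$ over the \emph{whole} space $\bW(\omega_\bn)$ with no side constraints, so any $\bsigma\in\bW(\omega_\bn)$ is an admissible competitor and $\widetilde{E}_\bn(\tbtau_\bn)\leq\widetilde{E}_\bn(\bsigma)$. Splitting into the two cases of Theorem~\ref{th:loceff}: when $\kappa_K h_K\le 1$ I would take $\bsigma=\bsigma^{(1)}_\bn$, which lies in $\bW(\omega_\bn)$ by Lemma~\ref{le:sigma1inW}. Because $\bsigma^{(1)}_\bn$ satisfies the \emph{exact} divergence identity \eqref{eq:divsigma1nK} on every element and the exact Neumann identity on every facet of $\cEnBN$, all reaction- and boundary-residual terms of $\widetilde{E}_\bn(\bsigma^{(1)}_\bn)$ vanish identically, \emph{irrespective} of the weights $\tilde\kappa,\tilde\zeta$ and of the enlarged integration domains; hence $\widetilde{E}_\bn(\bsigma^{(1)}_\bn)=\norm{\bsigma^{(1)}_\bn-\theta_\bn\bnabla u_h}_{\omega_\bn}^2$ exactly as in \eqref{eq:Ensig1}, and Theorem~\ref{th:sigma} reproduces the bound \eqref{eq:etaeff1}.

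When $\kappa_K h_K>1$, Lemma~\ref{le:kappa} guarantees $\kappa_{K'}>0$ on the whole patch $\omega_\bn\subset\widetilde K$ and on every adjacent Neumann facet, so on this patch $\tilde\kappa=\kappa$, $\omega_\bn^+=\omega_\bn$, $\GammaNnp=\GammaNn$ and $\tilde\zeta^{-1}=\CTN$; that is, $\widetilde{E}_\bn$ \emph{coincides} with $E_\bn$. Taking $\bsigma=\bsigma^{(2)}_\bn\in\bW(\omega_\bn)$ then gives $\widetilde{E}_\bn(\bsigma^{(2)}_\bn)=E_\bn(\bsigma^{(2)}_\bn)$, and the estimates \eqref{eq:bsigman2est} and \eqref{eq:PigNsig2est} reproduce \eqref{eq:etaeff2}. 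Combining the two cases through assumption \eqref{eq:kappacond1}, exactly as at the end of the proof of Theorem~\ref{th:loceff}, yields the claimed bound, whose right-hand side is identical to that of Theorem~\ref{th:loceff}.

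The main obstacle, and the only place the argument genuinely departs from Theorem~\ref{th:loceff}, is the $\cT_h^0$ part of the modified \eqref{eq:estetaK}-type lemma: one must verify that the residual and Neumann terms retained in $\teta_K$ (which previously disappeared thanks to exact equilibration) are absorbed by the penalized terms of $\widetilde{E}_\bn$ carrying the \emph{same} weights $\kappa_0,\zeta_0$. This matching of weights is precisely why $\tilde\kappa,\tilde\zeta$ are defined as in \eqref{eq:beta}, and once it is checked the remainder of the proof is a routine re-run of the earlier argument.
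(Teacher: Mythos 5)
Your proposal is correct and takes essentially the same approach as the paper, whose own proof consists precisely of the key inequality $\teta_K^2(\tbtau) \leq (d+2)(d+1)\sum_{\bn\in\cN_K}\widetilde{E}_\bn(\tbtau_\bn)$ followed by a rerun of the two-case argument of Theorem~\ref{th:loceff} with $\bsigma^{(1)}_\bn$ and $\bsigma^{(2)}_\bn$ as (now unconstrained) competitors. The details you supply --- that the penalty terms of $\widetilde{E}_\bn$ with matching weights $\kappa_0,\zeta_0$ absorb the residual and Neumann terms retained in $\teta_K$ on $\cT_h^0$, and that $\widetilde{E}_\bn$ coincides with $E_\bn$ on patches where $\kappa_K h_K>1$ --- are exactly what the paper's one-line proof presupposes.
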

\begin{proof}
The proof follows the same lines as the proof of Theorem~\ref{th:loceff}.
In particular, we use the fact that
$$
  \teta_K^2(\tbtau) \leq (d+2)(d+1) \sum_{\bn\in\cN_K} \widetilde E_\bn(\tbtau_\bn)
  \quad\text{for all } K \in \cT_h.
$$
\end{proof}

\section{Numerical Examples}
\label{se:numex}


\paragraph*{Example 1}
In this example, we consider problem \eqref{eq:modpro} in a domain with reentrant corner:
$\Omega = \{ (\varrho,\phi) : 0 \leq \varrho < 1 \text{ and } \phi \in (\pi/2,2\pi)\}$, 
where $\varrho$ and $\phi$ are standard polar coordinates, see Figure~\ref{fi:mesh} (left).
The boundary conditions
are homogeneous Dirichlet only, i.e., $\GammaD = \partial\Omega$ and $\GammaN = \emptyset$.
The reaction coefficient $\kappa$ is assumed positive, constant in $\Omega$, and its specific values are provided below.
Choosing the right-hand side as
$ f= \kappa^2 \varrho^{2/3} \sin (2\phi-\pi)/3 $, 
the exact solution is explicitly given by
$$
  u = \left( \varrho^{2/3} - \frac{I_{2/3}(\kappa \varrho)}{I_{2/3}(\kappa)} \right) \sin \frac{2\phi-\pi}{3},
$$
where $I_\alpha$ stands for the modified Bessel function of the first kind.
This solution exhibits singularity at the origin and a boundary layer at $\varrho=1$ for large values of $\kappa$.

We first compute the finite element solution \eqref{eq:FEM} using the mesh shown in Figure~\ref{fi:mesh} (right) 
for $\kappa = 10^{-3}$, $10^{-2}, \dots, 10^6$. For each value of $\kappa$ we compute flux reconstruction 
\eqref{eq:tau} by solving local problems \eqref{eq:taun}--\eqref{eq:taunconstr2} and evaluate the error estimator $\eta(\btau)$ given by \eqref{eq:eta}.
Note that since $\GammaN = \emptyset$ and $\kappa > 0$, the procedure considerably simplifies. 
The set $\cT_h^0$ is empty, equilibration conditions \eqref{eq:equilib1}--\eqref{eq:equilib2} do not apply
as well as constraints \eqref{eq:taunconstr1}--\eqref{eq:taunconstr2}.
Reconstructed fluxes $\btau$ and $\tbtau$ given by \eqref{eq:tau} and \eqref{eq:tauB}, respectively,
are identical and $\eta_K(\btau) = \teta_K(\tbtau)$ for all $K \in \cT_h$.
In particular constants $\kappa_0$, $\zeta_0$, $\CF$, and $\CT$ are not needed. 

\begin{figure}
\begin{center}
\begin{tikzpicture}[scale=2.65]
\draw[very thick] (0,1)--(0,0)--(1,0);
\draw[very thick] (0,1) arc (90:360:1); 
\node [above right] at (0,0) {$0$};
\node [above] at (1,0) {$1$};
\end{tikzpicture}
\qquad
\raisebox{-1mm}{\includegraphics[width=0.40\textwidth]{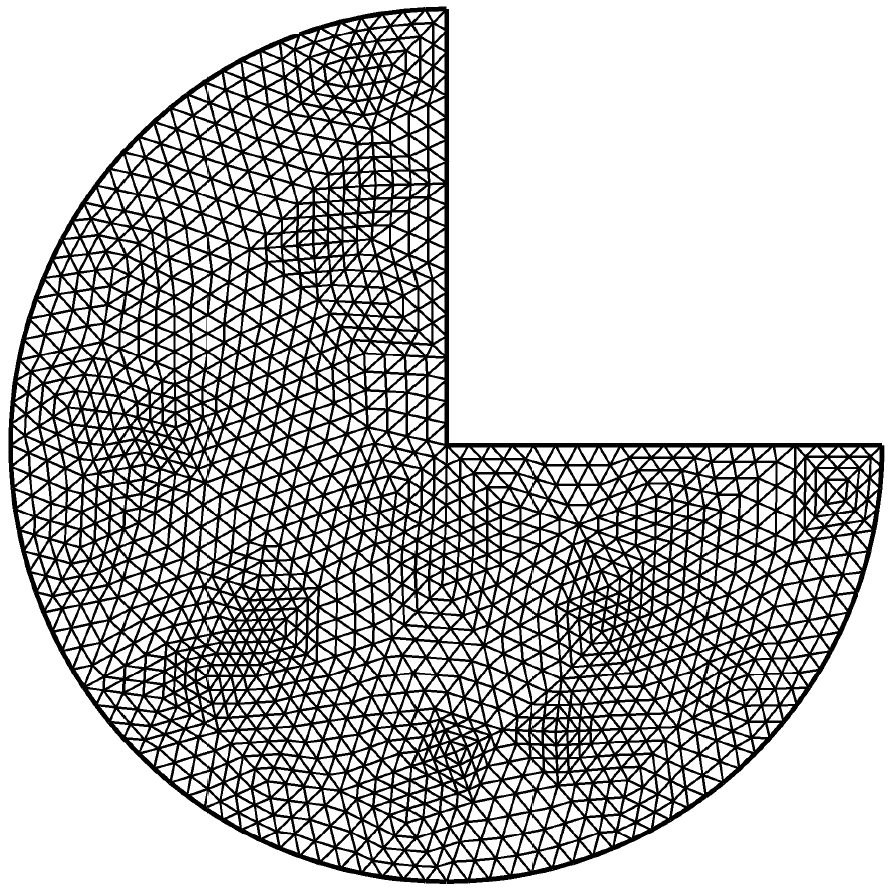}}
\end{center}
\caption{The domain $\Omega$ (left) and the uniform mesh (right) used in Example~1.}
\label{fi:mesh}
\end{figure}

%
%
%

Figure~\ref{fi:Ieff} (left) presents the index of effectivity 
\begin{equation}
  \label{eq:Ieff}
  \Ieff = \frac{\eta(\btau)}{\trinorm{ u - u_h }}
\end{equation}
for the chosen values of $\kappa$. All values of $\Ieff$ are above 1 confirming that $\eta(\btau)$ is the guaranteed upper bound on the error. On the other hand they are not far from 1 in the whole range of values of $\kappa$ showing the robust efficiency. All these indices of effectivity are below 1.12, which illustrates high accuracy of computed error estimators.
For comparison, we also present indices of effectivity for the error estimator proposed in our previous work \cite{AinVej:2014}, see the dashed lines in Figure~\ref{fi:Ieff}. Its accuracy legs behind the current approach.

To illustrate the robustness with respect to the mesh size, we also solve this problem on a sequence of uniformly refined meshes for a fixed value $\kappa = 100$ and plot the resulting indices of effectivity in Figure~\ref{fi:Ieff} (right). In this case we observe robust efficiency and high accuracy as well. 

\begin{figure}
\includegraphics[width=0.48\textwidth]{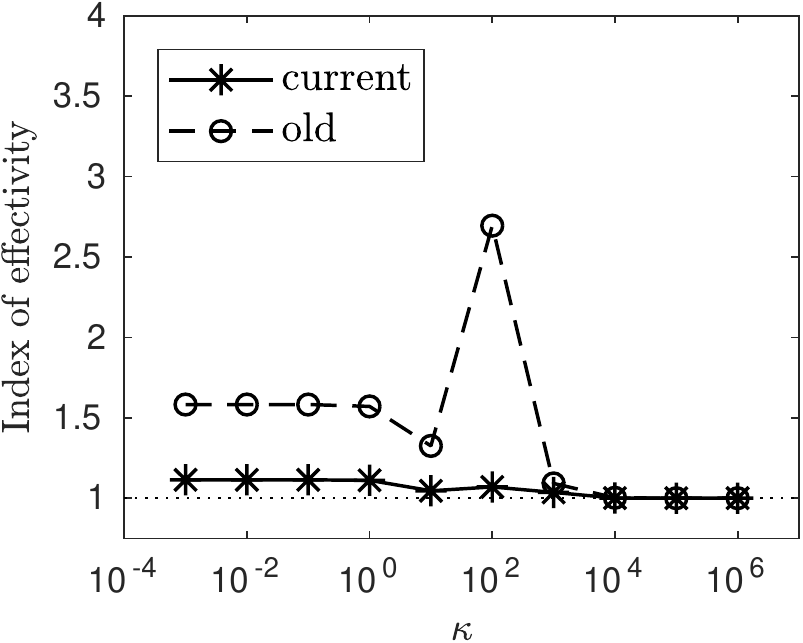}%
\quad
\includegraphics[width=0.48\textwidth]{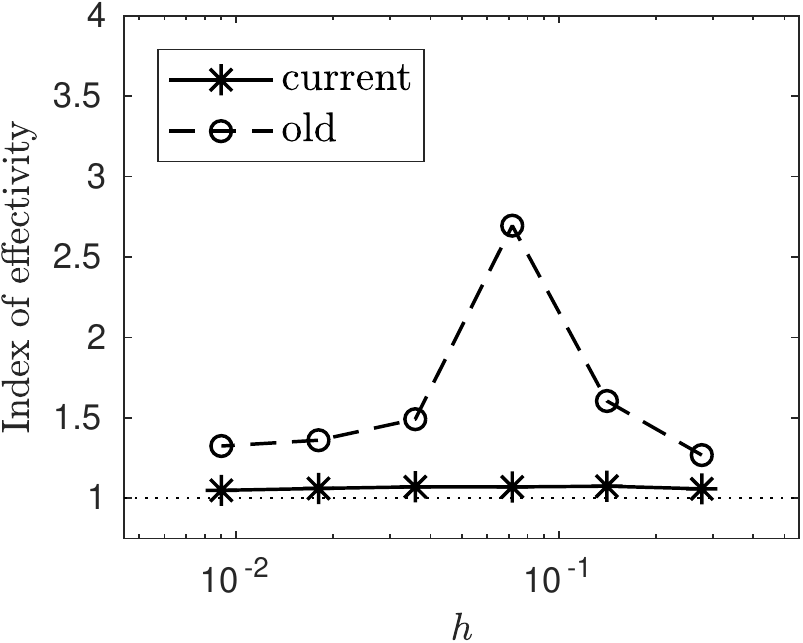}%
\caption{Indices of effectivity with respect to $\kappa$ (left) and $h$ (right) on uniformly refined meshes for Example~1.
Solid lines present the current estimator $\eta(\btau)$ while dashed lines the estimator from \cite{AinVej:2014}.}
\label{fi:Ieff}
\end{figure}

Error indicators $\eta_K(\btau)$ given in \eqref{eq:etaK} can be utilized for adaptive mesh refinement and error estimator $\eta(\btau)$ for a guaranteed stopping criterion.  We use the standard adaptive algorithm: \texttt{SOLVE} -- \texttt{ESTIMATE} -- \texttt{STOP} -- \texttt{MARK} -- \texttt{REFINE}.
Given an initial mesh, the \texttt{SOLVE} step computes the finite element solution by \eqref{eq:FEM},
the \texttt{ESTIMATE} step evaluates the flux reconstruction $\btau$ defined by \eqref{eq:tau} and error indicators $\eta_K(\btau)$ introduced in \eqref{eq:etaK}.
In the \texttt{STOP} step, the error estimator $\eta(\btau)$ given by \eqref{eq:eta} is computed and the algorithm is stopped
if $\eta(\btau)$ (and consequently the error $\trinorm{u-u_h}$) is below the required tolerance.
In the \texttt{MARK} step, the D\"orfler strategy \cite{Dorfler:1996} is used to mark elements, where $\eta_K(\btau)$ indicate large error. 
Finally, the longest edge bisection algorithm \cite{Mitchell:1989,Verfurth:1996} 
%
is applied in the \texttt{REFINE} step to refine the marked elements and create a new mesh.

Several examples of adaptively refined meshes are provided in Figure~\ref{fi:meshadap}.
The optimal speed of convergence of both the error $\trinorm{u-u_h}$ and error estimator $\eta(\btau)$ during the adaptive algorithm is presented in Figure~\ref{fi:adap} (left).
Figure~\ref{fi:adap} (right) shows corresponding indices of effectivity. They are all above and quite close to 1, confirming the robust efficiency of the error estimator even on highly graded meshes.

\begin{figure}
\includegraphics[width=0.31\textwidth]{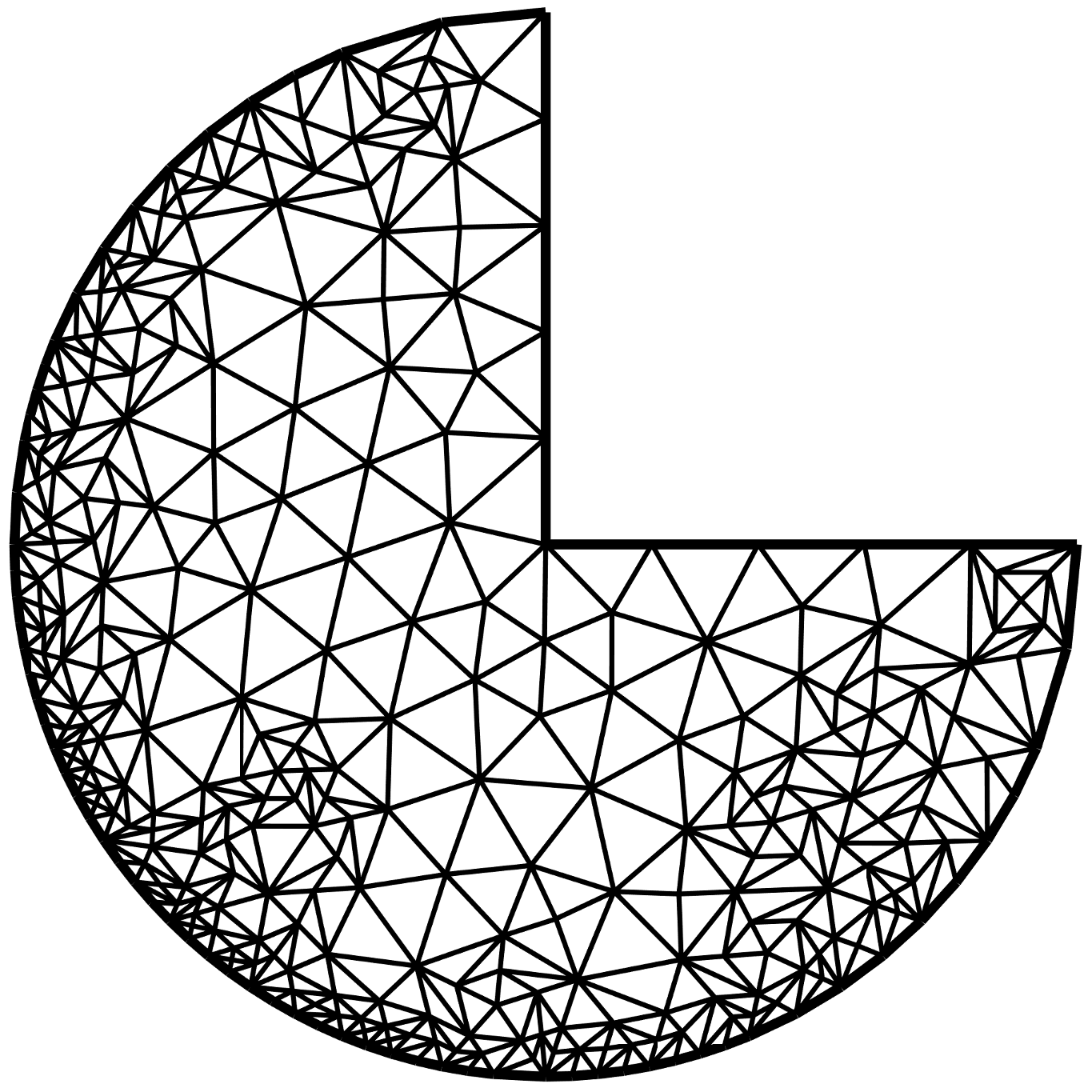}\quad
\includegraphics[width=0.31\textwidth]{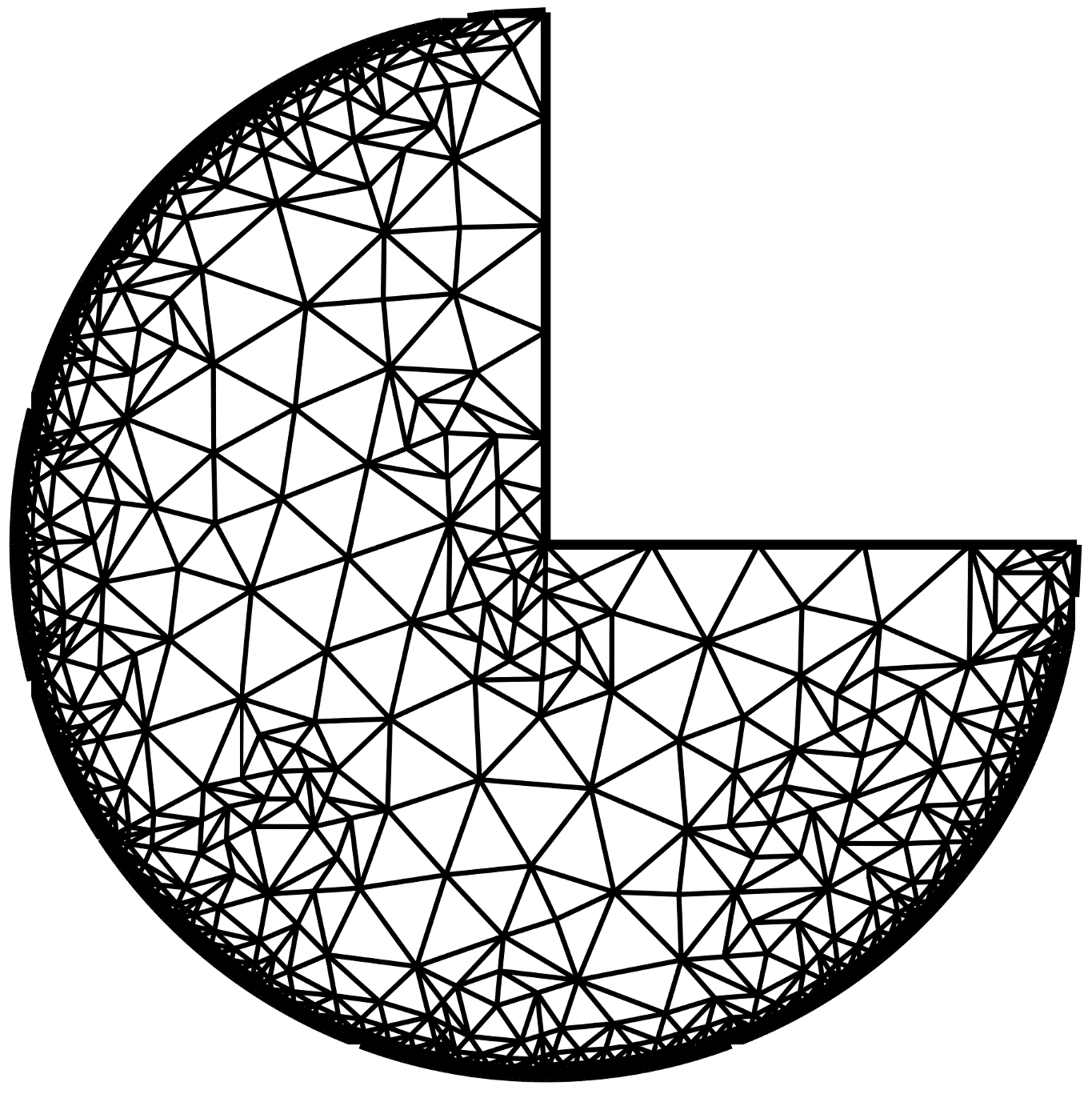}\quad
\includegraphics[width=0.31\textwidth]{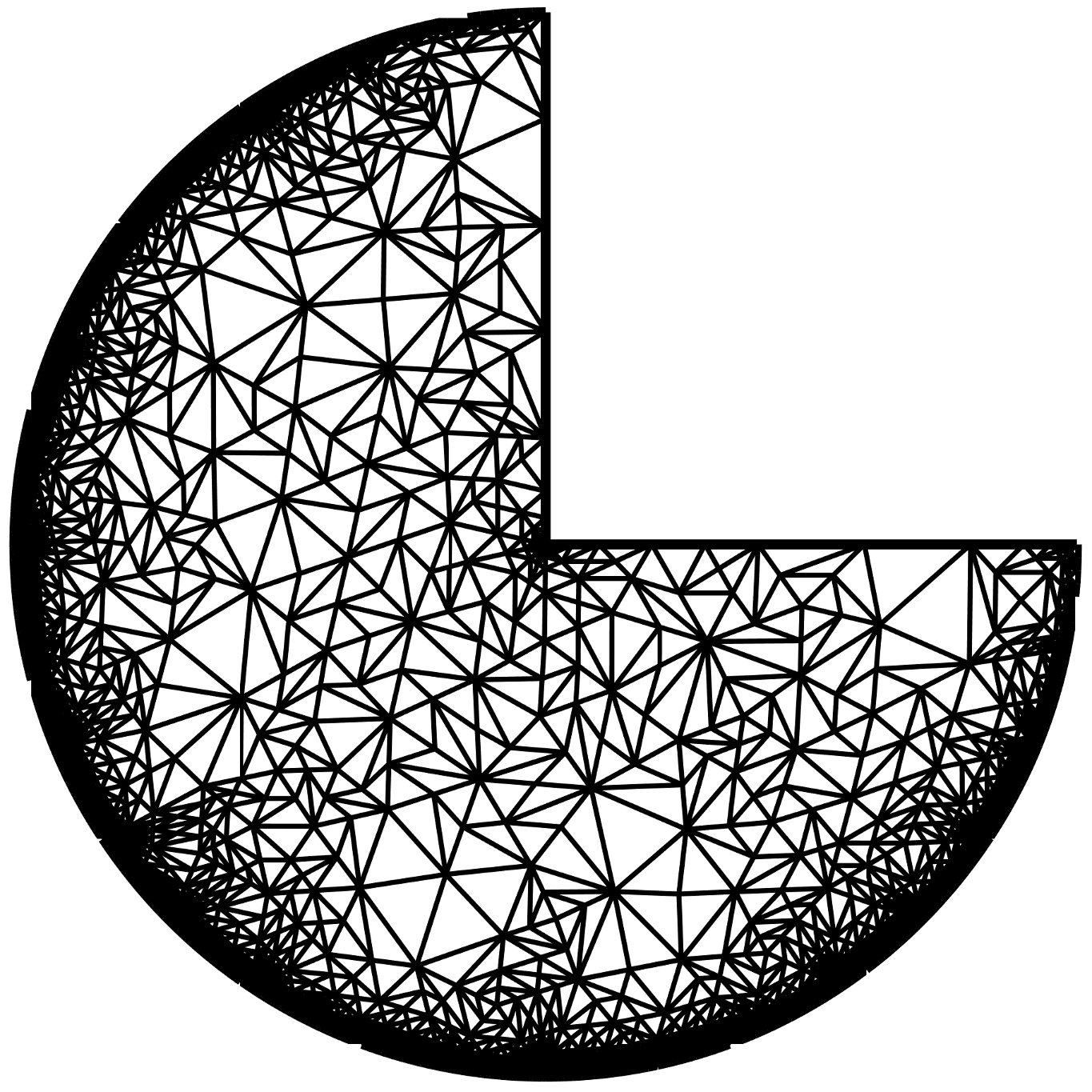}
\caption{Adaptively refined meshes after 10 (left), 30 (middle), and 40 (right) refinement steps in Example~1.}
\label{fi:meshadap}
\end{figure}

\begin{figure}
\includegraphics[width=0.48\textwidth]{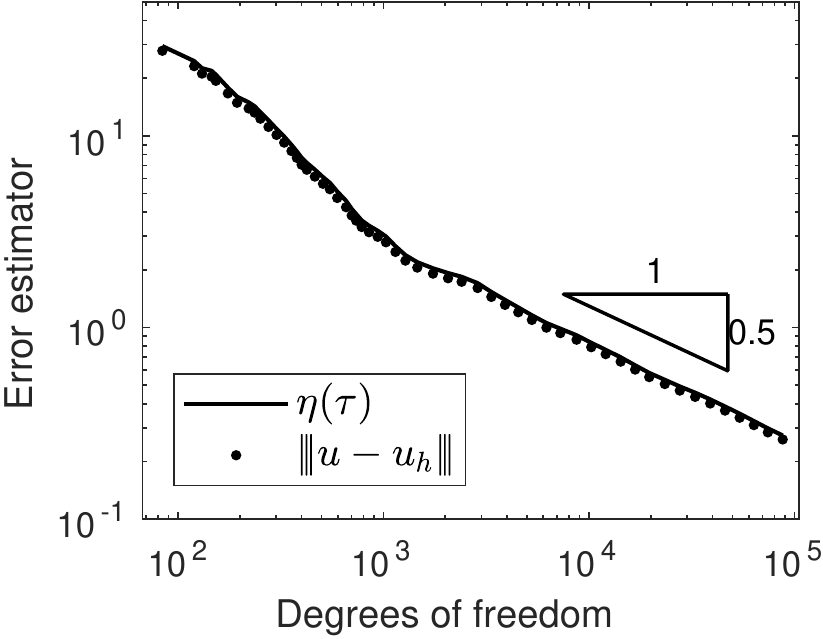}%
\quad
\includegraphics[width=0.48\textwidth]{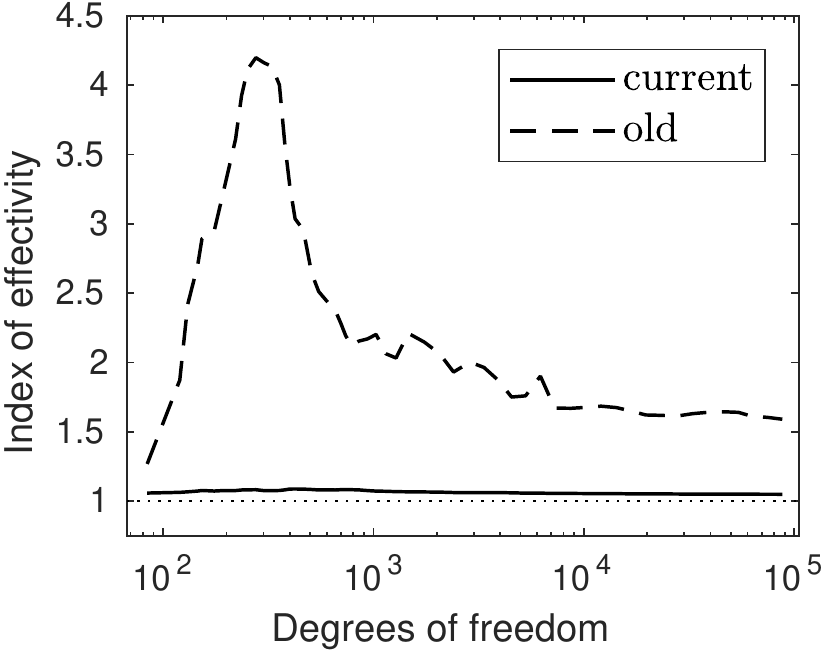}%
\caption{Convergence of the energy norm of the error and the error estimator during the adaptive algorithm (left) for Example~1 and $\kappa=100$. Corresponding indices of effectivity (right). The dashed line presents the estimator from \cite{AinVej:2014}.}
\label{fi:adap}
\end{figure}

\paragraph*{Example 2}
This example illustrates the behaviour of the proposed error estimator for a discontinuous right-hand side $f$, piecewise constant reaction coefficient $\kappa$, and homogeneous Neumann boundary conditions.
We consider problem~\eqref{eq:modpro} in a square $\Omega = (-1,1)^2$ with $\GammaN = \partial\Omega$ and $\gN = 0$ on $\GammaN$. Right-hand side $f$ equals to $\kappa^2$ in the disc $B_{1/2} = \{\varrho \leq 1/2\}$, where $\varrho$ is the distance from the origin, and it vanishes elsewhere. The exact solution of this problem is not known, but for large $\kappa$ it is supposed to be close to the characteristic function of the disc $B_{1/2}$ with a steep interior layer close to the boundary of $B_{1/2}$.
Since the exact solution is not known, we approximate the true error by $u_h^\mathrm{ref} - u_h$, where the reference solution $u_h^\mathrm{ref}$ is computed by finite elements of order 5 on the same mesh as $u_h$.

We choose $\kappa=100$ and use the modified flux reconstruction \eqref{eq:tauB} and the modified error estimator \eqref{eq:etaB}. Since $\Omega$ is a square, we can compute the Friedrichs--Poincar\'e and trace constants analytically. We use $\CF^2 = 2/\pi^2$ and $\CT^2 = \sqrt{2}\coth( \sqrt{2}/2)$.
Parameters $\kappa_0$ and $\zeta_0$ are chosen as square roots of the machine epsilon: $\kappa_0 = \zeta_0 \approx 10^{-8}$.

We solve this problem by the adaptive algorithm described above starting with a mesh with two triangles.
This setting does not satisfy assumptions listed in Subsection~\ref{se:kappa}, because discontinuities in $\kappa$ are not compatible with the mesh. Therefore, for the purpose of computation, we use the value of $\kappa$ in the centroid of each element as the constant value in the element. In this way we construct certain approximate solution and the corresponding error estimator, which is guaranteed by Theorem~\ref{th:mainB} to be above the true error. The obtained indices of effectivity show robust and efficient performance of the estimator even in this case.

Figure~\ref{fi:adap2} (left) shows the energy norm of the approximate error $\trinorm{u_h^\mathrm{ref} - u_h}$, the computed error bound $\teta(\tbtau)$, and the oscillation term 
$
  \osc^2(f,\gN) = \sum_{K\in\cT_h} \osc_K^2(f,\gN)
$
during the adaptive process. Figure~\ref{fi:adap2} (right) presents the corresponding indices of effectivity 
$\Ieff = \teta(\tbtau)/\trinorm{u_h^\mathrm{ref} - u_h}$. We may observe that the error bound is really above the error and that the error estimator estimates it robustly on all meshes. The oscillation term is of comparable size as the error at the beginning of the adaptive process, which leads to higher values of the index of effectivity. However, starting from meshes with around $10^3$ degrees of freedom the interior layer is well resolved, the oscillation term decreases faster than the error, and the index of effectivity decreases towards one. 
For illustration we present three adaptively refined meshes in Figure~\ref{fi:meshadap2}.

\begin{figure}
\includegraphics[width=0.48\textwidth]{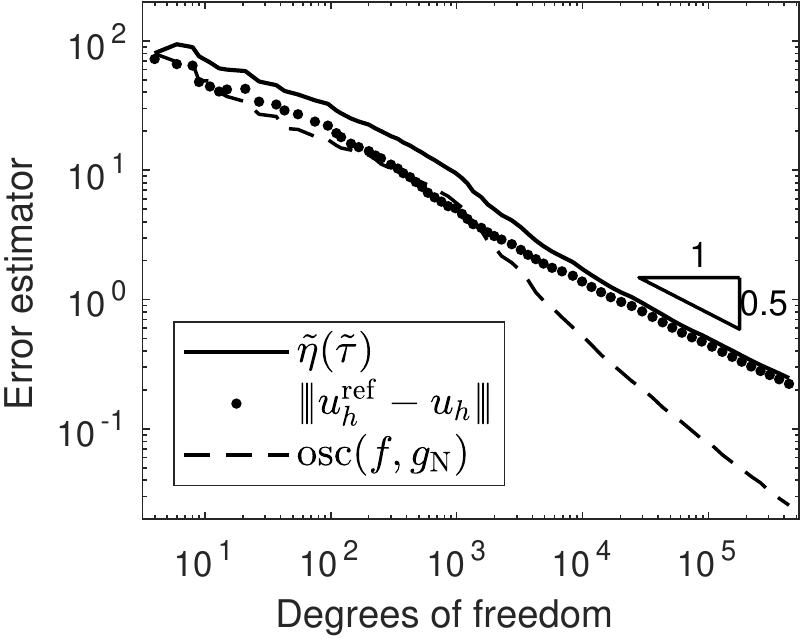}%
\quad
\includegraphics[width=0.48\textwidth]{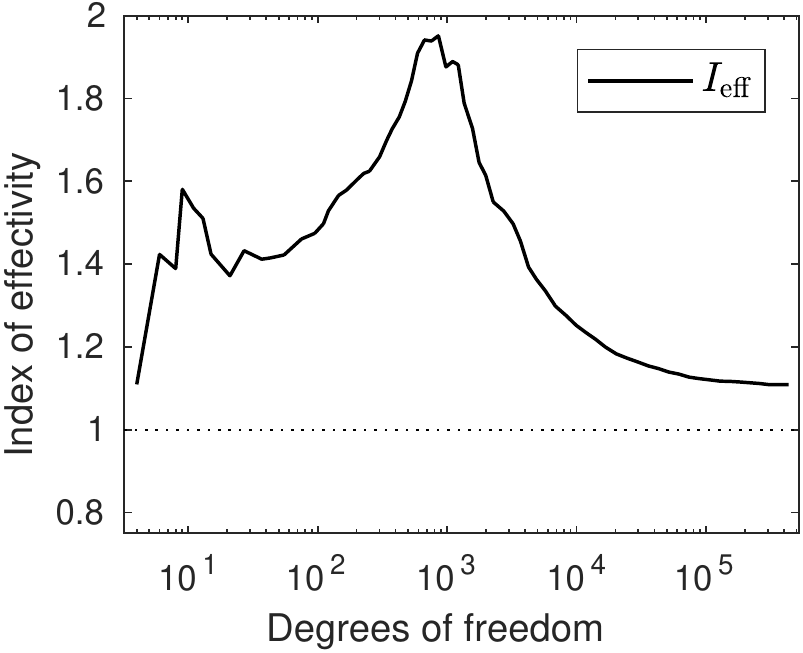}%
\caption{Convergence of the energy norm of the error and the error estimator during the adaptive algorithm (left) for Example~2 and $\kappa=100$. Corresponding indices of effectivity (right).}
\label{fi:adap2}
\end{figure}

\begin{figure}
\includegraphics[width=0.31\textwidth]{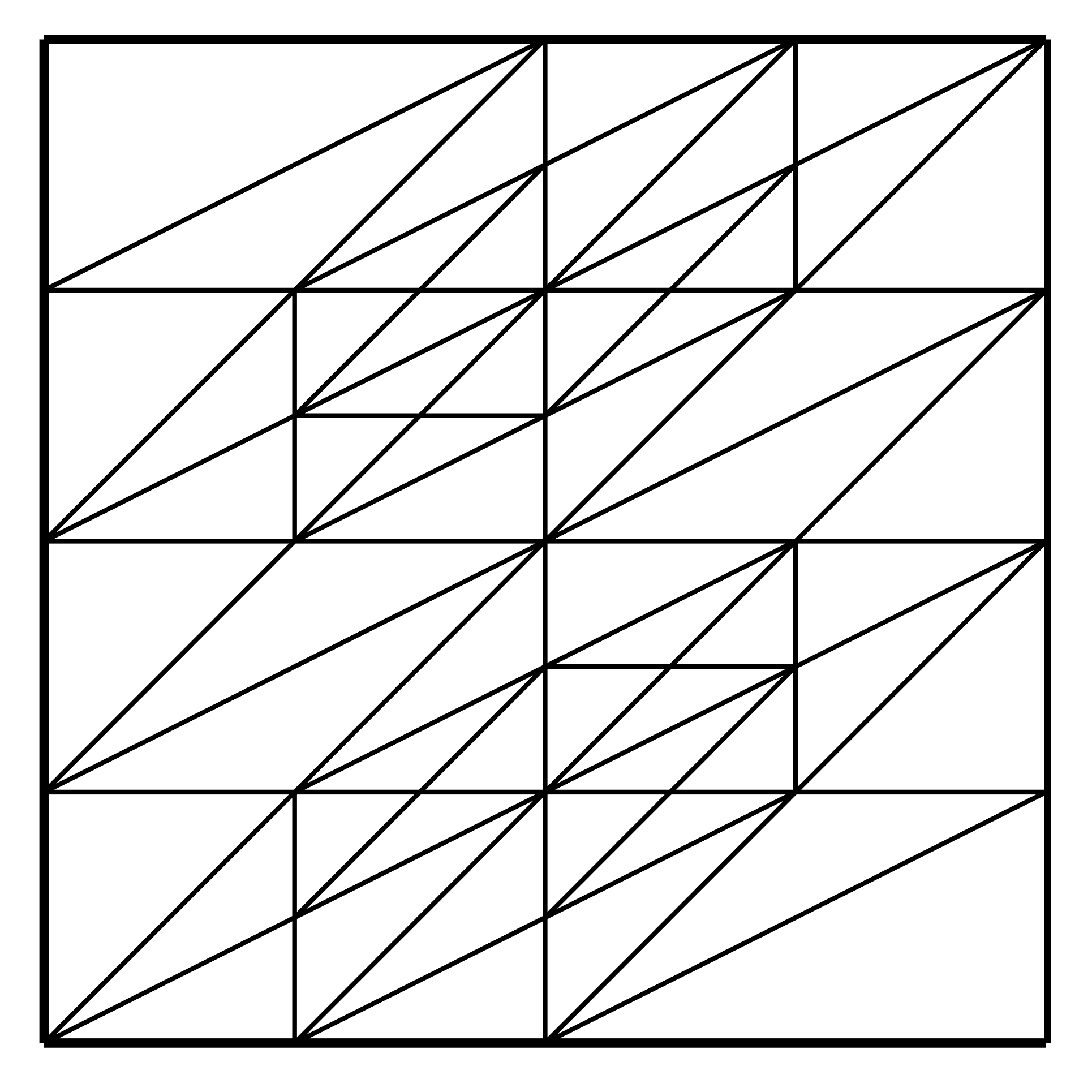}\quad
\includegraphics[width=0.31\textwidth]{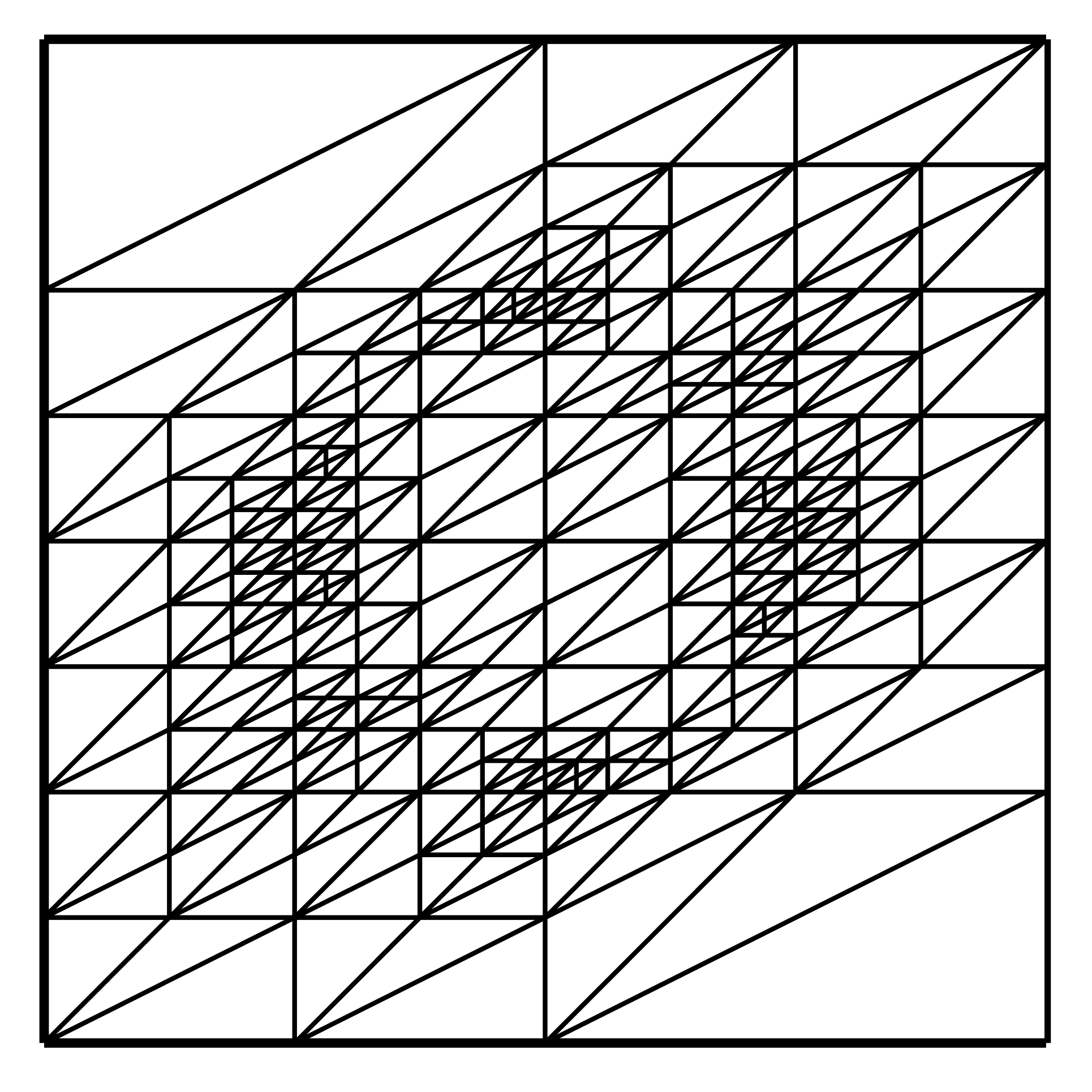}\quad
\includegraphics[width=0.31\textwidth]{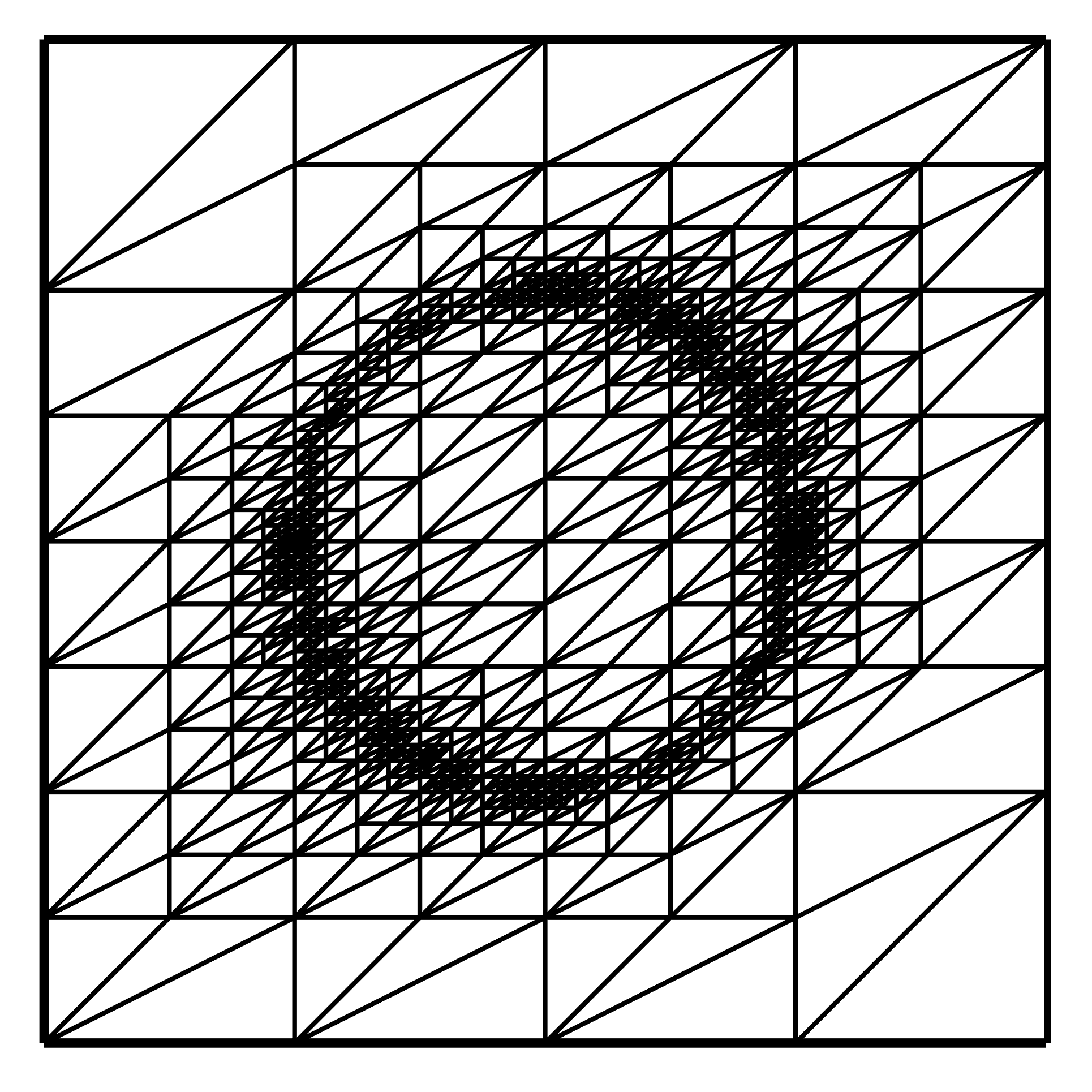}\quad
\caption{Adaptively refined meshes after 10 (left), 20 (middle), and 30 (right) refinement steps in Example~2.}
\label{fi:meshadap2}
\end{figure}

\section{Conclusions}
\label{se:conclusions}

In this paper we present an a posteriori error estimator that is fully computable and provides a locally efficient upper bound on the energy norm of the error. This error estimator can be computed by a fast and easily parallelizable algorithm by solving small and independent problems on patches of elements. We proved its robustness both with respect to the mesh size and the reaction coefficient $\kappa$. We demonstrated by numerical examples that the corresponding local error indicators can be successfully used in the standard adaptive algorithm to guide the mesh adaptation and that the error estimator
provides sharp results on rough, fine, and adaptively refined meshes as well as in the singularly perturbed case when $\kappa$ is large.

Further research questions about this error estimator may include its robustness for higher order finite element approximations \cite{SmeVoh:2018} and its possible modifications to guarantee robustness on anisotropically refined meshes.

The proposed flux reconstruction can be used not only for the presented reaction-diffusion problems, but also for related eigenvalue problems. It was recently shown \cite{Vejchodsky:2018} that any flux reconstruction for boundary value problems can be directly used in the Lehmann--Goerisch method for guaranteed bounds on eigenvalues.






\end{document}